\newcommand{\R}{\ensuremath{\mathbb{R}}\xspace}
\newcommand{\C}{\ensuremath{\mathbb{C}}\xspace}
\newcommand{\nt}{\ensuremath{\mathbb{N}}\xspace}
\newcommand{\inc}{\ensuremath{\hookrightarrow}}
\newcommand{\pr}[2]{\ensuremath{\langle {#1},{#2}\rangle}}
\newcommand{\norm}[1]{\ensuremath{\|#1\|}}
\newcommand{\mn}{\ensuremath{\mathcal{N}}\xspace}
\newcommand{\msn}{\ensuremath{\mathcal{SN}}\xspace}
\newcommand{\bts}[3]{\ensuremath{{#1}\bigotimes_{#2}{#3}}\xspace}
\newcommand{\bats}[2]{\ensuremath{{#1}\bigodot{#2}}\xspace}
\newcommand{\tmin}[2]{\ensuremath{{#1}\bigotimes_{\min}{#2}}\xspace}
\newcommand{\tmax}[2]{\ensuremath{{#1}\bigotimes_{\max}{#2}}\xspace}
\newcommand{\link}[1]{\ensuremath{\mathbb{L}({#1})}}
\newcommand{\al}{\ensuremath{\alpha}\xspace} 
\newcommand{\ga}{\ensuremath{\gamma}\xspace}
\newcommand{\la}{\ensuremath{\lambda}\xspace}
\newcommand{\te}{\ensuremath{\theta}\xspace}
\newcommand{\Te}{\ensuremath{\Theta}\xspace}
\newcommand{\ov}[1]{\overline{#1}}
\newcommand{\ma}{\ensuremath{\mathcal{A}}\xspace}
\newcommand{\mb}{\ensuremath{\mathcal{B}}\xspace}
\newcommand{\me}{\ensuremath{\mathcal{E}}\xspace}
\newcommand{\cs}{$C^*$-algebra\xspace}
\newcommand{\css}{$C^*$-algebras\xspace}
\newcommand{\scs}{$C^*$-subalgebra\xspace}
\newcommand{\st}{$*$-tring\xspace}
\newcommand{\ct}{$C^*$-tring\xspace}
\newcommand{\cts}{$C^*$-trings\xspace}
\newcommand{\hm}{homomorphism\xspace}
\newcommand{\hms}{homomorphisms\xspace}
\newcommand{\es}{exact sequence\xspace}
\newcommand{\bc}{\begin{center}}
\newcommand{\ec}{\end{center}}
\newcommand{\be}{\begin{enumerate}}
\newcommand{\ee}{\end{enumerate}}
\newcommand{\bi}{\begin{itemize}}
\newcommand{\ei}{\end{itemize}}
\newcommand{\bd}{\begin{description}}
\newcommand{\ed}{\end{description}}
\newcommand{\beq}{\begin{equation}}
\newcommand{\eeq}{\end{equation}}
\newcommand{\beqa}{\begin{eqnarray}}
\newcommand{\eeqa}{\end{eqnarray}}
\newcommand{\bfr}{\begin{flushright}}
\newcommand{\efr}{\end{flushright}}
\newcommand{\bfl}{\begin{flushleft}}
\newcommand{\efl}{\end{flushleft}}
\newcommand{\bp}{\begin{picture}}
\newcommand{\ep}{\end{picture}}
\DeclareMathOperator{\gen}{span}
\DeclareMathOperator{\emo}{End}
\DeclareMathOperator{\Hom}{Hom}
\newtheorem{theorem}{Theorem}[section]
\newtheorem{lemma}[theorem]{Lemma}
\newtheorem{proposition}[theorem]{Proposition}
\newtheorem{corollary}[theorem]{Corollary}
\theoremstyle{definition}
\newtheorem{definition}[theorem]{Definition}
\newtheorem{example}[theorem]{Example}
\theoremstyle{remark}
\newtheorem{remark}[theorem]{Remark}
\numberwithin{equation}{section}
\title[Applications of ternary rings to $C^*$-algebras]{Applications of ternary rings to $C^*$-algebras}
\author{Fernando Abadie}
\author{Dami\'an Ferraro}
\date{\today}
\address{Centro de Matem\'atica, Facultad de Ciencias,
              Universidad de la Rep\'ublica, Igu\'a 4225, CP 11400,
              Montevideo, Uruguay}
\email{fabadie@cmat.edu.uy}
\address{Departamento de Matem\'atica y Estad\'istica del Litoral,
           Universidad de la Rep\'ublica, Rivera 1350, CP 50000,
           Salto, Uruguay}
\email{dferraro@unorte.edu.uy}
\keywords{ternary rings, Morita-Rieffel equivalence, nuclear, exact.}
\begin{document}

\begin{abstract}
We show that there is a functor from the category of positive admissible ternary rings to the category of $*$-algebras, which induces an isomorphism of partially ordered sets between the families of $C^*$-norms on the ternary ring and its corresponding $*$-algebra.
We apply this functor to obtain Morita-Rieffel equivalence results between cross sectional $C^*$-algebras of Fell bundles, and to extend the theory of tensor products of \css to the larger category of full Hilbert $C^*$-modules. We prove that, like in the case of \css, there exist maximal and minimal tensor products. 
As applications we give simple proofs of the invariance of nuclearity and exactness under Morita-Rieffel equivalence of \css.
\end{abstract}

\maketitle

\tableofcontents

\section{Introduction}\label{sec:intro}

\par An important tool in the study of \css is Morita-Rieffel
equivalence.
When two \css are Morita-Rieffel equivalent, they are
related by a certain type of bimodule, from which one can see that 
these algebras share many properties. A Morita-Rieffel equivalence
between two \css implies that these algebras 
have many characteristics in common: they have the same  
K-theory, their spectra and primitive ideal spaces are homeomorphic,
etc. In \cite{z2}, Zettl introduced and studied $C^*$-ternary rings,
and showed that these objects are essentially Morita-Rieffel
equivalence bimodules. In fact, given a $C^*$-ternary ring $E$, there
exists essentially a unique structure of Morita-Rieffel equivalence
bimodule on $E$ compatible with its structure of ternary ring (perhaps
after a minor change on the ternary product).      
\par On the other hand, when dealing with constructions such as tensor
products or any sort of crossed products of \css, in general one 
has to follow two steps: first one defines some $*$-algebra, and then
one takes the completion of that algebra with respect to a $C^*$-norm. A
situation that appears frequently is that there is more than one
reasonable $C^*$-norm to perform this second step. In many cases, for
instance in several imprimitivity theorems, one is interested in
finding a Morita-Rieffel equivalence between different
$C^*$-completions of a given pair of $*$-algebras which are related by
a certain bimodule. This is the situation we study in the
present paper, adopting a viewpoint similar to that in Zettl's work,
but starting from a more algebraic level. 
\par More precisely, suppose $E$ is an $A-B$ bimodule,
where $A$ and $B$ are $*$-algebras, $\pr{\,}{\,}_A:E\times E\to A$ and
$\pr{\,}{\,}_B:E\times E\to B$ satisfy all the algebraic properties
of Hilbert bimodule inner products. In particular
$\pr{x}{y}_Az=x\pr{y}{z}_B$, $\forall x,y,z\in E$. Then we can endow
$E$ with a $*$-ternary ring structure by defining 
a ternary product $(\,,\,,\,):E\times E\times E \to E$ such that 
$(x,y,z)=x\pr{y}{z}_B$. We show that, under certain conditions, the
partially ordered sets of $C^*$-norms on $E$ and on the $*$-algebras
$A$ and $B$ are 
isomorphic to each other, in such a way that the completions with
respect to corresponding $C^*$-norms under these isomorphisms yields a
Morita-Rieffel equivalence bimodule.      

We think that the best way to do it is by using the above mentioned  
abstract characterization of equivalence bimodules given by Zettl in \cite{z2},
under the name of $C^*$-ternary rings. Such an object is a Banach space
with a ternary product on it, which implicitly carries all the
structure of an equivalence bimodule. Natural morphisms between
$C^*$-ternary rings are linear maps that preserve ternary products. With
such morphisms, one obtains a $C^*$-category, which is very convenient
for the study of properties invariant under Morita-Rieffel equivalence.     
\par The structure of the paper is as follows.  In the next section,
working in a pure algebraic level, we define the category of
admissible $*$-ternary rings, and we show there is a functor from
this category to the category of $*$-algebras or, more precisely, to
the category of right basic triples (see Definition~\ref{defn:bt}). 
In Section~3, given an admissible ternary ring $E$ with associated
basic triple $(E,A,\pr{\,}{\,}_A)$, we consider the 
lattice of $C^*$-seminorms on $A$ that satisfy the Cauchy-Schwarz
inequality
$\norm{\pr{x}{y}_A}^2\leq\norm{\pr{x}{x}}\,\norm{\pr{y}{y}}$, $\forall
x,y\in E$. Then we prove that this lattice is isomorphic to the 
lattice of $C^*$-seminorms on $E$. 
In passing we obtain some of the results of \cite{z2} and
\cite{env} regarding $C^*$-ternary rings. Besides, since there is also
a functor to the category of left basic triples, we obtain a fortiori
an isomorphism between the lattices of $C^*$-seminorms (satisfying the 
Cauchy-Schwarz property) on the $*$-algebras associated to the left and to the right sides. The
Hausdorff completions 
of corresponding $C^*$-seminorms under this isomorphism turn out to be 
Morita-Rieffel equivalent. In the last part of Section~3 we consider
positive ternary rings, for which the $C^*$-seminorms on the associated
$*$-algebras automatically satisfy the Cauchy-Schwarz
inequality. In Section~4 we briefly study the case of
$C^*$-ternary rings, in which basic triples are replaced by
$C^*$-basic triples, that is, Hilbert modules, and the functors
from $C^*$-ternary rings to $C^*$-basic triples are shown to be
exact. Finally, Section~5 is devoted to applications. We first refine
a result from \cite{flau} concerning cross sectional algebras of Fell
bundles over groups. Then we consider tensor products of $C^*$-ternary
rings, which is essentially the same as tensor products of Hilbert
modules. We show that the theory of tensor products of $C^*$-algebras
extends to this larger category, in the sense that there exist a
maximal and a minimal tensor products. By using this theory we obtain
easy and natural proofs of the known results of the Morita-Rieffel
invariance of nuclearity and exactness of $C^*$-algebras.

\section{Ternary rings}
\subsection{Ternary rings}
\begin{definition}\label{defn:1} 
A $*$-ternary ring is a complex linear space $E$ with a map $\mu
:E\times E\times E\to E$, called $*$-ternary product on $E$, which is 
linear in the odd variables and conjugate linear in the second one,
and such that:   
\begin{equation*}
    \mu\big(\mu(x,y,z),u,v\big)
      =\mu\big(x,\mu(u,z,y),v\big)
      =\mu\big(x,y,\mu(z,u,v)\big),
    \ \forall x,y,z,u,v\in E
\end{equation*} 
A \hm of $*$-ternary rings is a linear map $\phi :(E,\mu )\to (F,\nu )$
such that $ \nu \big(\phi (x),\phi (y),\phi (z)\big)
        =\phi\big(\mu(x,y,z)\big),\, \forall
                                    x,y,z\in E.$ 
Sometimes we will write $(x,y,z)$ or $(x,y,z)_E$ instead of 
$\mu (x,y,z)$, and we will use the expression $*$-tring instead of
$*$-ternary ring. 
\end{definition}

\par There is an inclusion of the category of $*$-algebras into the
category of $*$-trings: if $A$ is a $*$-algebra, then $(x,y,z)\mapsto
xy^*z$ is a ternary product on $A$, and if $\pi:A\to A'$ is a
homomorphism of $*$-algebras, then so is of $*$-trings.  

\begin{definition}\label{defn:hermetic}
If a subspace $F$ of a $*$-tring $E$ is invariant under the ternary
product, we say that it is a sub-$*$-tring of $E$, or just a subring
of $E$. A subring $F$ is said to be hermetic in $E$ if for $x\in
E$ we have $(x,x,x)\in F\iff x\in F$.    
\end{definition} 

\begin{definition}\label{defn:admiss}
A $*$-tring $E$ will be called admissible if $\{0\}$ is hermetic in
$E$.  A $*$-algebra $A$ will be called admissible if it is admissible
as a $*$-tring. 
\end{definition} 
\par Note that a $*$-algebra $A$ is admissible if
and only if the condition $a^*a=0$ implies $a=0$. 

\begin{definition}\label{defn:*ideals}
Let $E$ be a $*$-tring and $F\subseteq E$ a subspace. We say that 
$F$ is an ideal of $E$ if  $(E,E,F)+(E,F,E)+(F,E,E)\subseteq F$. 
\end{definition}
\par If $\pi:E\to F$ is a homomorphism into an admissible $*$-tring
$F$, then $\ker\pi$ is an hermetic ideal of $E$: 
\begin{equation*}
  \pi((x,x,x))=0\iff (\pi(x),\pi(x),\pi(x))=0\iff \pi(x)=0
\end{equation*}
\par In case $F$ is an ideal of $E$, then $E/F$ has an obvious
structure of $*$-tring for which the canonical map $q:E\to E/F$ is a
homomorphism of $*$-trings. Note that $E/F$ is admissible whenever $F$
is hermetic. In particular if $\pi:E\to F$ is a homomorphism into an
admissible $*$-tring $F$, then $E/\ker\pi$ is admissible

\par Suppose $E$ is a complex vector space, and let $E^*$ denote
its complex conjugate linear space. If $(E,\mu )$ is a $*$-tring, then
$\mu^*:E^*\times E^*\times E^*\to E^*$ given by $\mu^*(x,y,z)=\mu
(z,y,x)$, $\forall x,y,z\in E^*$, is a $*$-ternary product on $E^*$.  
We call $(E^*,\mu^*)$ the \textit{adjoint} or \textit{reverse $*$-tring}
of $(E,\mu )$. If $\pi:E\to F$ is a homomorphism, then $\pi$
remains a homomorphism $E^*\to F^*$, so it is clear that reversion is
an autofunctor of order two of the category of $*$-trings, which
moreover sends admissible $*$-trings into admissible $*$-trings. If $A$ is
a $*$-algebra considered as a $*$-tring as above, then its reverse $*$-tring
$A^*$ is the conjugate linear space of $A^{\text{op}}$ considered as a
$*$-tring.
\begin{example}[{\bf Basic triples}]\label{exmp:0} Suppose
  $(E,A,\pr{\,}{\,})$ is a triple  
consisting of a $\C$-vector space $E$, a $*$-algebra $A$ over which
$E$ is a right module, and 
a sesquilinear map $\pr{\,}{\,}:E\times E\to A$ (conjugate linear in
the first variable), such that $\pr{x}{y}a=\pr{x}{ya}$ and
$\pr{x}{y}^*=\pr{y}{x}$, $\forall x,y\in E$, $a\in A$. Then
$(\,,\,,\,):E\times E\times E\to E$ given by $(x,y,z)\mapsto
x\pr{y}{z}$ is a ternary product. We will say that $(\,E,
(\,,\,,\,)\,)$ is the ternary ring associated with
$(E,A,\pr{\,}{\,})$. 
\end{example}
\begin{definition}\label{defn:bt}
Triples as in Example~\ref{exmp:0} will be referred to as (right) basic
triples. A basic triple $(E,A,\pr{\,}{\,}_A)$ will be called
admissible whenever $A$ is admissible, and full if
$\textrm{span}\{\pr{x}{y}_A:\, x,y\in 
E\}=A$. By a homomorphism from the basic triple $(E,A,\pr{\,}{\,}_A)$
into the basic triple $(F,B,\pr{\,}{\,}_B)$ we mean a pair
$(\varphi,\psi)$ of maps such that $\varphi:E\to F$ is linear,
$\psi:A\to B$ is a homomorphism of $*$-algebras, and
$\varphi(xa)=\varphi(x)\psi(a)$, $\forall x\in E$, $a\in A$.
\par Similarly we can define left basic triples, using left
instead of right $A$-modules.  
\end{definition}

We will see soon that any admissible $*$-tring can be described in terms
of basic triples as in \ref{exmp:0}. 

\begin{proposition}\label{prop:adfaith} Let $(E,A,\pr{\,}{\,})$ be a basic
  triple. 
\be
  \item If $A$ is admissible, and $\pr{x}{x}=0$ implies $x=0$, then
    the $*$-tring $E$ is admissible as well. 
  \item If $(E,A,\pr{\,}{\,})$ is admissible and full, then $E$ is 
    faithful as an $A$-module.   
\ee
\end{proposition}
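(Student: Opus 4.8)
The plan is to prove the two statements separately, both by direct computation with the ternary product $(x,y,z) = x\pr{y}{z}$.

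\medskip

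\noindent\textbf{Part (1).} Suppose $A$ is admissible and that $\pr{x}{x}=0 \Rightarrow x=0$. To show $E$ is admissible as a $*$-tring, I must show that $(x,x,x)=0 \Rightarrow x=0$, i.e.\ $x\pr{x}{x}=0 \Rightarrow x=0$. First I would observe that the hypothesis on the inner product reduces everything to showing $\pr{x}{x}=0$. Set $a = \pr{x}{x}\in A$; note $a^*=\pr{x}{x}^*=\pr{x}{x}=a$. From $x\pr{x}{x}=0$ I get $xa=0$, hence $\pr{x}{xa} = \pr{x}{0}=0$, but also $\pr{x}{xa} = \pr{x}{x}a = a^2$ using the module compatibility $\pr{x}{y}a = \pr{x}{ya}$. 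So $a^2 = 0$, and since $a=a^*$ we have $a^*a = a^2 = 0$; because $A$ is admissible this forces $a=0$, i.e.\ $\pr{x}{x}=0$, and then the hypothesis gives $x=0$.

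\medskip

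\noindent\textbf{Part (2).} Now assume $(E,A,\pr{\,}{\,})$ is admissible and full; I want $E$ faithful as an $A$-module, i.e.\ if $a\in A$ satisfies $xa = 0$ for all $x\in E$, then $a=0$. Using fullness, write (a finite sum) $a^*a$ — or rather, I should be careful: I want to produce an element whose vanishing forces $a=0$ via admissibility of $A$, meaning I want to reach $b^*b = 0$ for some $b$. Since $xa=0$ for all $x$, in particular $\pr{x}{y}a = \pr{x}{ya} $; hmm, that is not immediately zero. Instead use: $a^*\pr{x}{y} = (\pr{y}{x}a)^* $, and $\pr{y}{x}a = \pr{y}{xa}=\pr{y}{0}=0$, so $a^*\pr{x}{y}=0$ for all $x,y$. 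By fullness $\operatorname{span}\{\pr{x}{y}\} = A$, hence $a^*b = 0$ for every $b\in A$; taking $b=a$ gives $a^*a=0$, and admissibility of $A$ yields $a=0$.

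\medskip

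The computations are short, so there is no serious obstacle; the only thing to be careful about is getting the variances right in the sesquilinear/module identities ($\pr{\,}{\,}$ is conjugate-linear in the first variable, $\pr{x}{y}a=\pr{x}{ya}$, $\pr{x}{y}^*=\pr{y}{x}$) and remembering that $A$ admissible means precisely $a^*a=0 \Rightarrow a=0$. One subtlety worth double-checking in Part (1) is whether we really need the separate hypothesis $\pr{x}{x}=0\Rightarrow x=0$ or whether admissibility of $A$ alone suffices — the argument above shows $\pr{x}{x}=0$ follows from $(x,x,x)=0$ using only admissibility of $A$, so the extra hypothesis is exactly what is needed to then conclude $x=0$, confirming the statement is sharp as phrased.
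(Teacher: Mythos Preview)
Your proof is correct and follows essentially the same route as the paper. In Part~(1) the paper computes $\pr{x}{x}^*\pr{x}{x}=\pr{x}{x\pr{x}{x}}=0$, which is exactly your $a^*a=a^2=\pr{x}{xa}=0$; in Part~(2) the paper writes $a$ (really $a^*$) as a finite sum $\sum_j\pr{y_j}{z_j}$ via fullness and then observes $\sum_j\pr{y_j}{z_ja}=0$, which is the same computation as your $a^*\pr{x}{y}=(\pr{y}{xa})^*=0$ followed by fullness and $b=a$.
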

\begin{proof}
If $x\in E$ is such that $x\pr{x}{x}=0$, then 
\begin{equation*}
  \pr{x}{x}^*\pr{x}{x}
    =\pr{x}{x}\pr{x}{x}
    =\pr{x}{x\pr{x}{x}}
    =0.
\end{equation*}
Now, if 
$A$ is admissible, the latter equality implies
$\pr{x}{x}=0$, so $x=0$. As for the 
second statement suppose $(E,A,\pr{\,}{\,}_A)$ is 
  admissible and full, and
  $a\in A$ is such that $a=\sum_{j=1}^n\pr{y_j}{z_j}$ and $ya=0$,
  $\forall y\in F$. Then we have $a^*a=\sum_{j=1}^n\pr{y_j}{z_ja}=0$, so
  $a=0$. Then $E$ is a faithful $A$-module.    
\end{proof}

\begin{lemma}\label{lem:0}
Suppose that $(E,A,\pr{\,}{\,}_A)$ and  
$(F,B,\pr{\,}{\,}_B)$ are basic triples, with the former full, and $F$ admissible as $*$-tring and faithful as a $B$-module.
 Then, if   
$\varphi:(\,E,(\,,\,,\,)\,)\to (\,F,(\,,\,,\,)\,)$ is a homomorphism
between their associated $*$-trings, there exists a unique
homomorphism of $*$-algebras   
$\psi:A\to B$ such that $\psi(\pr{x}{y}_A)=\pr{\varphi(x)}{\varphi(y)}_B$,
$\forall x,y\in E$. Besides we have $\varphi(xa)=\varphi(x)\psi(a)$,
$\forall x\in E$, $a\in A$, and 
\begin{equation}\label{eq:ker} \ker\psi\subseteq\{a\in A:
Ea\subseteq\ker\varphi\}\subseteq\{a\in
A:\psi(a)^*\psi(a)=0\},\end{equation} both  
inclusions being equalities if $B$ is admissible. If $E$ is also a
faithful $A$-module and $\varphi$ is injective, then so is $\psi$.       
\end{lemma}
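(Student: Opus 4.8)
The plan is to construct $\psi$ by the only possible formula, namely $\psi(\sum_j\pr{x_j}{y_j}_A)=\sum_j\pr{\varphi(x_j)}{\varphi(y_j)}_B$, and the crux is showing this is well defined. Since $(E,A,\pr{\,}{\,}_A)$ is full, every $a\in A$ has such a representation, so the real content is: if $\sum_j\pr{x_j}{y_j}_A=0$ in $A$, then $\sum_j\pr{\varphi(x_j)}{\varphi(y_j)}_B=0$ in $B$. First I would observe that the element $b:=\sum_j\pr{\varphi(x_j)}{\varphi(y_j)}_B$ acts on $\varphi(E)$ correctly: for any $z\in E$,
\begin{equation*}
\varphi(z)b=\sum_j\varphi(z)\pr{\varphi(x_j)}{\varphi(y_j)}_B
=\sum_j\big(\varphi(z),\varphi(x_j),\varphi(y_j)\big)_F
=\varphi\Big(\sum_j(z,x_j,y_j)_E\Big)
=\varphi\big(z\textstyle\sum_j\pr{x_j}{y_j}_A\big)=\varphi(z\cdot 0)=0,
\end{equation*}
using that $\varphi$ is a $*$-tring homomorphism and the definition of the associated ternary products. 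So $Fb\supseteq\varphi(E)b=0$; but this only kills $b$ against $\varphi(E)$, not against all of $F$. To upgrade this I would use that $F$ is \emph{admissible} and faithful over $B$: from $\varphi(E)b=0$ I get in particular, for the generators $\pr{x}{y}_A$ appearing, that $b^*b=\sum_{j}\pr{\varphi(y_j)}{\varphi(x_j)b}_B=0$ (expanding $b^*b$ as in the proof of Proposition~\ref{prop:adfaith}, since each $\pr{\varphi(y_j)}{\varphi(x_j)}_B$-type term pairs $b$ against an element of $\varphi(E)$); wait — more carefully, I would write $b^*=\sum_j\pr{\varphi(y_j)}{\varphi(x_j)}_B$ and then $b^*b=\sum_j\pr{\varphi(y_j)}{\varphi(x_j)b}_B$, and each $\varphi(x_j)b=0$ by the computation above, so $b^*b=0$. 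Admissibility of $B$ is not assumed, so instead I use that $b^*b=0$ together with $B$ acting faithfully — actually the clean route is: $b^*b=0\Rightarrow$ (since $b$ acts on $F$ and $\langle Fb,Fb\rangle_B=\langle F,Fb^*b\rangle_B=0$, hence $Fb=0$ by admissibility of $F$), hence $b=0$ since $F$ is a faithful $B$-module. This shows $\psi$ is well defined; it is then automatically linear and $*$-preserving, and multiplicativity $\psi(ab)=\psi(a)\psi(b)$ follows from the ternary associativity identity applied after writing $a,b$ as sums of inner products, exactly as the ternary ring axioms encode the module relations.

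Next I would verify $\varphi(xa)=\varphi(x)\psi(a)$: writing $a=\sum_j\pr{y_j}{z_j}_A$, one has $xa=\sum_j(x,y_j,z_j)_E$, apply $\varphi$, use that $\varphi$ is a $*$-tring homomorphism, and recognize the right-hand side as $\varphi(x)\sum_j\pr{\varphi(y_j)}{\varphi(z_j)}_B=\varphi(x)\psi(a)$. Uniqueness of $\psi$ is immediate from fullness of $(E,A,\pr{\,}{\,}_A)$: the formula is forced on a spanning set. For \eqref{eq:ker}, the first inclusion is clear since $a\in\ker\psi$ gives $\varphi(x)\psi(a)=0$ i.e. $\varphi(xa)=0$ for all $x$; the second follows because if $Ea\subseteq\ker\varphi$ then $\psi(a)^*\psi(a)=\psi(a^*a)$ and writing $a=\sum\pr{y_j}{z_j}_A$ we get $a^*a=\sum_j\pr{y_j}{z_ja}_A$, apply $\psi$ and use $\psi(\pr{y_j}{z_ja}_A)=\pr{\varphi(y_j)}{\varphi(z_ja)}_B=\pr{\varphi(y_j)}{0}_B=0$ since $z_ja\in Ea\subseteq\ker\varphi$. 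If $B$ is admissible then $\psi(a)^*\psi(a)=0\Rightarrow\psi(a)=0$, collapsing the outer two sets and forcing all three to coincide. Finally, if additionally $E$ is a faithful $A$-module and $\varphi$ is injective, then $\psi(a)=0\Rightarrow Ea\subseteq\ker\varphi=\{0\}\Rightarrow Ea=0\Rightarrow a=0$, so $\psi$ is injective.

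The main obstacle is the well-definedness step — specifically, converting the easily-obtained relation $\varphi(E)b=0$ into $b=0$ without assuming $B$ admissible. This is where the hypotheses that $F$ is admissible as a $*$-tring and faithful as a $B$-module are doing real work, and one must be careful that the intermediate element $b^*b$ (or $Fb$) is genuinely controlled: the argument goes through the admissibility of $F$ to conclude $Fb=0$ from $\langle Fb,Fb\rangle_B = 0$, and only then invokes faithfulness of the $B$-module $F$. Everything else — linearity, $*$-preservation, multiplicativity, the module identity $\varphi(xa)=\varphi(x)\psi(a)$, uniqueness, and the kernel computations — is routine manipulation of the ternary associativity axiom once $\psi$ is known to exist.
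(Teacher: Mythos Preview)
Your overall architecture matches the paper's, but the crucial well-definedness step contains a genuine error. After correctly establishing $\varphi(E)b=0$ and hence $b^*b=\sum_j\pr{\varphi(y_j)}{\varphi(x_j)b}_B=0$, you claim
\[
\pr{Fb}{Fb}_B=\pr{F}{Fb^*b}_B.
\]
This is false: for $u,v\in F$ the basic-triple axioms give $\pr{ub}{vb}_B=b^*\pr{u}{v}_Bb$, whereas $\pr{u}{vb^*b}_B=\pr{u}{v}_Bb^*b$, and there is no reason for $b^*\pr{u}{v}_Bb$ and $\pr{u}{v}_Bb^*b$ to agree in a general $*$-algebra~$B$. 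So from $b^*b=0$ alone you cannot conclude $\pr{ub}{ub}_B=0$, and the passage to $Fb=0$ via admissibility of $F$ breaks down right there.

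The paper closes this gap by working entirely at the ternary level and never relying on $b^*b=0$: it first shows $(\varphi(x),ub,ub)=0$ for all $x\in E$ and $u\in F$ (using the ternary associativity to rewrite this as $(\varphi(x\sum_k\pr{y_k}{x_k}_A),u,ub)=0$), and then expands $(ub,ub,ub)=\sum_j(u,\varphi(x_j),(\varphi(y_j),ub,ub))=0$, invoking admissibility of $F$ to get $ub=0$ and faithfulness to get $b=0$. Your route can be repaired along the same lines: observe that the symmetric computation also gives $\varphi(E)b^*=0$, whence $(ub)b^*=\sum_j(u,\varphi(x_j),\varphi(y_j)b^*)=0$, so $(ub,ub,ub)=ub\cdot b^*\pr{u}{u}_Bb=0$ and you are done. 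Either way, the point is that one must control $ub$ through the ternary product, not through the false commutation of $b^*$ past $\pr{u}{v}_B$.

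The remainder of your argument (the module identity, uniqueness, the kernel chain, and the injectivity statement) is correct in substance and follows the paper; there is a harmless index transposition in your computation of $a^*a$ (it should read $a^*a=\sum_j\pr{z_j}{y_ja}_A$ rather than $\sum_j\pr{y_j}{z_ja}_A$), but the conclusion $\psi(a)^*\psi(a)=0$ is unaffected.
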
 
\begin{proof}
\par We will suppose that $(F,B,\pr{\,}{\,}_B)$ is full: otherwise
we just replace $B$ by $\textrm{span}\pr{F}{F}_B$. We concentrate in
showing the existence of the 
map $\psi$, because its uniqueness is obvious. To this end suppose
that $x_1,\ldots,x_n$ and $y_1,\ldots,y_n$ 
are elements in $E$ such that $\sum_{j=1}^n\pr{x_j}{y_j}_A=0$, and
therefore also $\sum_{j=1}^n\pr{y_j}{x_j}_A=0$. Consider the element
$c:=\sum_{j=1}^n\pr{\varphi(x_j)}{\varphi(y_j)}_B$ of $B$. All we have to do
is to show that $c=0$. Now, if $x\in E$ and $u\in
F$ we have 
\begin{gather*} 
(\varphi (x),uc,uc) =\sum_k(\varphi(x), (u,\varphi(x_k),\varphi(y_k)) ,uc) \\
=\sum_k((\varphi(x),\varphi(y_k),\varphi(x_k)) ,u,uc) 
=(\varphi(x\sum_k\pr{y_k}{x_k}_A),u,uc) =0.
\end{gather*}
Hence, if $u\in F$: 
\begin{gather*}
(uc,uc,uc) =\sum_j((u,\varphi (x_j),\varphi (y_j)) ,uc,uc) 
=\sum_j(u,\varphi (x_j),(\varphi (y_j),uc,uc) ) =0
\end{gather*}
Since $F$ is admissible, it follows that $uc=0, \forall u\in F$, so
$c=0$ because $F$ is a faithful $B$-module.   
\par Suppose now that $a\in\ker\psi$. Then
$\varphi(xa)=\varphi(x)\psi(a)=0$, so
$Ea\subseteq\ker\varphi$. On the other hand, if the element
$a=\sum_j\pr{x_j}{y_j}_A$ 
is such that $Ea\subseteq \ker\varphi$, then
$\psi(a^*a)=\psi(\sum_{i,j}\pr{y_i}{x_i\pr{x_j}{y_j}_A}_A)=\sum_{i,j}\pr{\varphi(y_i)}{\varphi(x_i)\psi(\pr{x_j}{y_j}_A)}_A$,
so 
\begin{equation*}
  \psi(a)^*\psi(a)
  =\sum_i\pr{\varphi(y_i)}{\varphi(x_i)\psi(\sum_j\pr{x_j}{y_j}_A)}_A
  =\sum_i\pr{\varphi(y_i)}{\varphi(x_ia)}_A
  =0,
\end{equation*}
because $\varphi(x_ia)=0$ $\forall i$. In case $B$ is admissible we
have $\psi(a)^*\psi(a)=0$ if and only if 
$a\in\ker\psi$, so in this case the three considered sets
agree. Finally, when $E$ is faithful and $\ker\varphi=0$, we have 
$\{a\in A:Ea\subseteq\ker\varphi\}=0$, so $\ker\psi=0$. 
 \end{proof}

\par Given two modules $E$ and $F$ over a ring $R$, we denote by
$\Hom_R(E,F)$ the abelian group of $R$-linear maps from $E$ into $F$,
and just by $\emo_R(E)$ in case $E=F$. Let $E$ be   
an admissible $*$-tring, and suppose  
$T\in\emo_\C(E)$ is such that there exists $S\in\emo_\C(E)$ that
satisfies $(x,Ty,z)=(Sx,y,z), \forall x,y,z\in E$. Since $\{0\}$ is
hermetic in $E$, given $T\in \emo_\C(E)$, there exists at most one
such endomorphism $S$; in this case we say that $S$ is the 
adjoint of $T$ to the left, and we denote it by $T^*$. The set
$\mathscr{L}_l(E)$ of $\C$-linear endomorphisms of $E$ that have an
adjoint to the left is clearly a unital 
subalgebra of $\emo_\C(E)$. Every pair of elements $y,z\in E$ gives
rise to an endomorphism $\theta_{y,z}:E\to E$ given by
$\theta_{y,z}(x):=(x,y,z)$. It is readily checked that $\theta_{y,z}$
is adjointable with adjoint~$\theta_{z,y}$.  

\begin{proposition}\label{prop:adjointable}
Let $E$ be an admissible $*$-tring. Then the map
$*:\mathscr{L}_l(E)\to \mathscr{L}_l(E)$, given by taking the adjoint, 
is an involution in $\mathscr{L}_l(E)$. Moreover, the $*$-algebra
$\mathscr{L}_l(E)$ is an admissible $*$-tring, and
$\textrm{span}\{\theta_{y,z}:y,z\in E\}$ is a twosided
ideal of $\mathscr{L}_l(E)$, which is essential in the sense that
$T\theta_{y,z}=0$ $\forall y,z\in E$ or $\theta_{y,z}T=0$ $\forall
y,z\in E$ implies $T=0$.       
\end{proposition}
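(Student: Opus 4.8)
The plan is to verify each claim in turn, starting from the observation that the adjoint-to-the-left is unique when it exists (this was already noted before the statement, using that $\{0\}$ is hermetic). First I would check that $*$ is a well-defined involution on $\mathscr{L}_l(E)$: given $T\in\mathscr{L}_l(E)$ with adjoint $T^*$, the defining identity $(x,Ty,z)=(T^*x,y,z)$ can be rewritten using the axioms of Definition~\ref{defn:1}; applying the reversal symmetry $(a,b,c)\mapsto(c,b,a)$ and the middle-move identity $\mu(\mu(x,y,z),u,v)=\mu(x,\mu(u,z,y),v)$ one sees that $(x,y,Tz)=(x,T^*y,z)$ as well, whence $(T^*)$ is adjointable with adjoint $T$, giving $T^{**}=T$. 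Additivity $(S+T)^*=S^*+T^*$ is immediate; conjugate-linearity in the scalar, $(\lambda T)^*=\bar\lambda T^*$, follows from conjugate-linearity of $\mu$ in the middle variable; and $(ST)^*=T^*S^*$ follows by composing the defining identities. Thus $\mathscr{L}_l(E)$ is a $*$-algebra. To see it is admissible, suppose $T^*T=0$; then for all $x,y,z$ we have $0=(x,(T^*T)y,z)$, but more usefully one computes $(Tx,Tx,Tx)$: indeed $(Tx,Tx,z)=(x,T^*Tx,z)=0$ for all $z$, and since $\{0\}$ is hermetic in $E$ we would want $(Ty,Ty,Ty)=0\Rightarrow Ty=0$; combining these, $T^*T=0$ forces $(Ty,Ty,w)=0$ for all $w$, hence $Ty=0$ for all $y$ (taking $w=Ty$ and using hermeticity of $\{0\}$), so $T=0$. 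Hence $\mathscr{L}_l(E)$ is an admissible $*$-algebra, and therefore an admissible $*$-tring under $(R,S,T)\mapsto RS^*T$.

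Next I would treat the ideal $\mathbb{K}:=\textrm{span}\{\theta_{y,z}:y,z\in E\}$. That $\theta_{y,z}^*=\theta_{z,y}$ was already checked before the statement, so $\mathbb{K}$ is $*$-closed. For the two-sided ideal property, I compute $T\theta_{y,z}$ and $\theta_{y,z}T$ directly: $T\theta_{y,z}(x)=T(x,y,z)$, and using that $T$ is $\C$-linear together with the axioms one expresses $T(x,y,z)$ in terms of ternary products; the cleanest route is $\theta_{y,z}T(x)=(Tx,y,z)=(x,T^*y,z)=\theta_{T^*y,z}(x)$, so $\theta_{y,z}T=\theta_{T^*y,z}\in\mathbb{K}$, and dually $T\theta_{y,z}=\theta_{y,T^*z}\in\mathbb{K}$ (this last needs the identity $(x,y,Tz)=(x,T^*y,z)$, but more directly: $T\theta_{y,z}$ has left adjoint $\theta_{y,z}^*T^*=\theta_{z,y}T^*=\theta_{Tz,y}$, whose adjoint is $\theta_{y,Tz}$, forcing $T\theta_{y,z}=\theta_{y,Tz}$ by uniqueness of adjoints and $T^{**}=T$). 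So $\mathbb{K}$ is a two-sided $*$-ideal.

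Finally, for essentiality: suppose $\theta_{y,z}T=0$ for all $y,z\in E$; then $\theta_{Ty,z}=\theta_{y,z}\,$(where one must be careful)—more precisely, from $\theta_{y,z}T=\theta_{y,Tz}$ (by the identity just established with the roles arranged suitably; alternatively use $T\theta_{z,y}=0\Rightarrow\theta_{z,Ty}=0$), we get $(x,z,Ty)=0$ for all $x,z,y\in E$, i.e. $(\cdot,\cdot,Ty)\equiv 0$; taking $x=z=Ty$ yields $(Ty,Ty,Ty)=0$, so by hermeticity of $\{0\}$ we conclude $Ty=0$ for every $y$, hence $T=0$. The case $T\theta_{y,z}=0$ for all $y,z$ is symmetric, using $T\theta_{y,z}=\theta_{Ty',\ldots}$—concretely $0=T\theta_{y,z}=\theta_{y,Tz}$ wait, rather apply the adjoint: $0=(T\theta_{y,z})^*=\theta_{z,y}T^*$, so by the previous case $T^*=0$, hence $T=T^{**}=0$. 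I expect the main obstacle to be purely bookkeeping: keeping the five-variable associativity axiom and the reversal symmetry straight so that each manipulation like $(x,y,Tz)=(x,T^*y,z)$ is actually justified rather than assumed; once the identity $\theta_{y,z}T=\theta_{y,Tz}$ and its companion $T\theta_{y,z}=\theta_{y,Tz}$—no, $T\theta_{y,z}=\theta_{Ty,z}$ is false in general; the correct companions are $\theta_{y,z}\circ T=\theta_{y,Tz}$ is also not right. The safe statements, which I would pin down first and then reuse, are: $\theta_{y,z}^*=\theta_{z,y}$, and for $T$ adjointable, $T\theta_{y,z}=\theta_{Ty,z}$ and $\theta_{y,z}T=\theta_{y,T^*z}$, each verified by a one-line computation from $\theta_{y,z}(x)=(x,y,z)$ and the axioms; everything else then follows formally.
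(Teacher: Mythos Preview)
Your overall strategy matches the paper's, but two concrete errors undermine it. First, in proving $T^{**}=T$ you invoke a ``reversal symmetry $(a,b,c)\mapsto(c,b,a)$''; no such symmetry is part of Definition~\ref{defn:1}, and the identity $(x,y,Tz)=(x,T^*y,z)$ you claim to derive from it is false in general (e.g.\ take $E=M_n(\C)$ with $(x,y,z)=xy^*z$ and $T$ right multiplication by a non-central element). The paper's route is to sandwich: using only the five-term associativity $\mu(a,\mu(d,c,b),e)=\mu(a,b,\mu(c,d,e))$ one gets
\[
(u,(Tx,y,z),u)=(u,z,(y,Tx,u))=(u,z,(T^*y,x,u))=(u,(x,T^*y,z),u)
\]
for every $u\in E$, and then hermeticity of $\{0\}$ (take $u$ equal to the difference) forces $(Tx,y,z)=(x,T^*y,z)$, which is exactly $T^{**}=T$.

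Second, your closing ``safe statements'' $T\theta_{y,z}=\theta_{Ty,z}$ and $\theta_{y,z}T=\theta_{y,T^*z}$ are both wrong; the correct identities are $\theta_{y,z}T=\theta_{T^*y,z}$ (since $\theta_{y,z}T(x)=(Tx,y,z)=(x,T^*y,z)$, using $T^{**}=T$) and $T\theta_{y,z}=(\theta_{z,y}T^*)^*=(\theta_{Tz,y})^*=\theta_{y,Tz}$, exactly as you had correctly derived in the middle of your argument before second-guessing yourself. With these in hand, admissibility and essentiality go through as the paper does: $T^*T=0$ gives $(Tx,Tx,Tx)=(x,T^*Tx,Tx)=0$ hence $Tx=0$; and $\theta_{y,z}T=0$ for all $y,z$ gives $0=\theta_{Tx,Tx}T(x)=(Tx,Tx,Tx)$ hence $Tx=0$, with the other case handled by taking adjoints. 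So the ideas are all present, but the bookkeeping you anticipated as the main obstacle did in fact trip you up at both ends.
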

\begin{proof}
It is clear that the map $T\mapsto T^*$ is conjugate linear and
antimultiplicative. On the other hand, if $T\in \mathscr{L}_l(E)$:   
\begin{gather*}
(u,(Tx,y,z),u)
=(u,z,(y,T(x),u))
=(u,z,(T^*(y),x,u))
=(u,(x,T^*(y),z),u)
\end{gather*}
$\forall x,y,z,u\in E$ and $T\in\mathscr{L}_l(E)$, which shows that
$T^{**}=T$. Now, if $x\in E$, and $T\in \mathscr{L}_l(E)$ is such
that $T^*T=0$: $(Tx,Tx,Tx)=(x,T^*Tx,Tx)=0$, so $T(x)=0$, and therefore 
$T=0$. Finally, if $T\in\mathscr{L}_l(E)$ and
$x,y,z\in E$: 
$\theta_{y,z}T(x)=(Tx,y,z)=(x,T^*y,z)=\theta_{T^*y,z}(x)$. Thus 
$T\theta_{y,z}=(\theta_{z,y}T^*)^*=\theta_{y,Tz}$. This shows that 
$\gen\{\theta_{y,z}:y,z\in E\}$ is an ideal of
$\mathscr{L}_l(E)$. If $\theta_{y,z}T=0$ $\forall y,z\in E$, then
$0=\theta_{Tx,Tx}T(x)=(Tx,Tx,Tx)$, $\forall x\in E$. Then $Tx=0$
$\forall x\in E$ because $E$ is admissible, so $T=0$.      
 \end{proof}
\par The next result shows that any admissible $*$-tring $E$ gives
rise to an admissible and full {\it right} basic triple
$(E,E_0^r,\pr{\,}{\,}_r)$. In the same way one could show that $E$
also defines a {\it left} basic triple $(E,E_0^l,\pr{\,}{\,}_l)$.      

\begin{theorem}\label{thm:*level}
Let $E$ and $F$ be admissible $*$-trings. Then:  
\be
   \item There exists a pair $(E_0^r,\pr{\ }{\ }_r)$ such that
     $(E,E_0^r,\pr{\ }{\ }_r)$ is an admissible and full basic
     triple, whose associated $*$-tring is $E$.
 \item If $\pi:E\to F$ is a \hm of $*$-trings, and
   $(E^r_0,\pr{\cdot}{\cdot}_r)$ and $(F^r_0,\pr{\cdot}{\cdot}_r)$ are
   pairs like above for $E$ and $F$ respectively, there exists a
   unique \hm of $*$-algebras $\pi^r_0:E^r_0\to F^r_0$ such that
   \begin{equation*}
    \pi^r_0(\pr{x}{y}_r)=\pr{\pi (x)}{\pi (y)}_r, \ \forall x,y\in E.
   \end{equation*} Moreover, $\pi (xb)= \pi (x)\pi^r_0(b)$, $\forall x\in E$,
   $b\in E^r_0$, that is, the pair $(\pi,\pi_0^r)$ is a homomorphism
   of basic triples.  
 \item The pair $(E^r_0,\pr{\cdot}{\cdot}_r)$ is the unique (up to
   canonical isomorphisms) such that the triple $(E,E^r_0,\pr{\cdot}{\cdot}_r)$
   is a full and admissible with $E$ as associated
   $*$-tring.     
\ee
\end{theorem}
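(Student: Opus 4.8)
**Proof plan for Theorem~\ref{thm:*level}.**

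The plan is to construct $(E_0^r,\pr{\cdot}{\cdot}_r)$ concretely inside $\mathscr{L}_l(E)$ using the rank-one operators $\theta_{y,z}$ studied in Proposition~\ref{prop:adjointable}. Set $E_0^r := \gen\{\theta_{y,z} : y,z\in E\}$, which by that proposition is an admissible $*$-algebra (being an ideal of the admissible $*$-tring $\mathscr{L}_l(E)$, hence admissible in its own right). Make $E$ a right $E_0^r$-module by defining $x\cdot\theta_{y,z} := (x,y,z)$, extended linearly; this is well defined precisely because $\{0\}$ is hermetic (the action of $T\in E_0^r$ on $E$ is $T$ itself viewed as an endomorphism, and distinct elements of $E_0^r$ are distinct endomorphisms). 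Define $\pr{y}{z}_r := \theta_{z,y}\in E_0^r$. The identities $\pr{x}{y}_r^* = \pr{y}{x}_r$ and $\pr{x}{y}_r\,b = \pr{x}{yb}_r$ then follow from the computations $\theta_{y,z}^* = \theta_{z,y}$ and $\theta_{z,y}T = \theta_{z,Ty}$ already recorded in the proof of Proposition~\ref{prop:adjointable} (the latter read with the roles of left/right adjusted). Fullness is immediate since $\theta_{y,z} = \pr{z}{y}_r$, and the associated ternary product recovers $\mu$ because $x\cdot\pr{y}{z}_r = x\cdot\theta_{z,y} = (x,y,z)$. This gives part~(1).

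For part~(2), I would invoke Lemma~\ref{lem:0}: the triple $(E,E_0^r,\pr{\cdot}{\cdot}_r)$ is full, and $(F,F_0^r,\pr{\cdot}{\cdot}_r)$ is admissible and full, so in particular $F$ is admissible as a $*$-tring and, by Proposition~\ref{prop:adfaith}(2), faithful as an $F_0^r$-module. Hence Lemma~\ref{lem:0} applies directly to the homomorphism $\pi : E\to F$ and produces a unique $*$-algebra homomorphism $\pi_0^r : E_0^r\to F_0^r$ with $\pi_0^r(\pr{x}{y}_r) = \pr{\pi(x)}{\pi(y)}_r$, together with the intertwining relation $\pi(xb) = \pi(x)\pi_0^r(b)$; this is exactly the assertion that $(\pi,\pi_0^r)$ is a homomorphism of basic triples. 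Functoriality (composition and identities) is then automatic from the uniqueness clause.

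For part~(3), suppose $(E, A, \pr{\cdot}{\cdot}_A)$ is another full admissible basic triple with associated $*$-tring $E$. Apply part~(2), or rather Lemma~\ref{lem:0} twice, to the identity homomorphism $\mathrm{id}_E : E\to E$ regarded once as a $*$-tring map between the $*$-trings associated to $(E,E_0^r,\pr{\cdot}{\cdot}_r)$ and $(E,A,\pr{\cdot}{\cdot}_A)$, and once in the reverse direction: this yields $*$-algebra homomorphisms $\psi : E_0^r\to A$ and $\psi' : A\to E_0^r$ with $\psi(\pr{x}{y}_r) = \pr{x}{y}_A$ and $\psi'(\pr{x}{y}_A) = \pr{x}{y}_r$. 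By fullness each of $\psi'\circ\psi$ and $\psi\circ\psi'$ fixes a spanning set, hence is the identity, so $\psi$ is an isomorphism; its compatibility with the module actions is the intertwining relation from Lemma~\ref{lem:0}. Thus the pair is unique up to a canonical isomorphism of basic triples.

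The main obstacle I anticipate is purely bookkeeping rather than conceptual: verifying that the right $E_0^r$-module structure on $E$ is well defined and that $\pr{\cdot}{\cdot}_r$ satisfies the sesquilinearity and adjointness axioms requires carefully translating the left-adjoint computations of Proposition~\ref{prop:adjointable} (which were written for $\mathscr{L}_l(E)$ acting on the left in the first slot) into statements about the action on $E$; one must be attentive to which variable is conjugate-linear and to the order of factors in $\theta_{z,y}T = \theta_{z,Ty}$ versus $T\theta_{y,z} = \theta_{y,Tz}$. Once that dictionary is fixed, every remaining step is a direct appeal to Lemma~\ref{lem:0} and Proposition~\ref{prop:adfaith}.
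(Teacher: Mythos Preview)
Your plan is exactly the paper's: construct $E_0^r$ as the span of the rank-one operators $\theta_{y,z}$, then deduce parts (2) and (3) from Lemma~\ref{lem:0} together with Proposition~\ref{prop:adfaith}. The bookkeeping obstacle you anticipated is real and in fact bites you in two places. First, the assignment $x\cdot T := T(x)$ is a right action for $\mathscr{L}_l(E)^{\mathrm{op}}$, not for $\mathscr{L}_l(E)$: with composition as written, $(x\cdot T_1)\cdot T_2 = T_2T_1(x)\neq (T_1T_2)(x)$. The paper accordingly takes $E_0^r$ as an ideal of $\mathscr{L}_l(E)^{\mathrm{op}}$. Second, the correct inner product is $\pr{x}{y}_r := \theta_{x,y}$, not $\theta_{y,x}$: with your own action $x\cdot\theta_{y,z}=(x,y,z)$ one gets $x\cdot\theta_{z,y}=(x,z,y)$, so your displayed computation ``$x\cdot\theta_{z,y}=(x,y,z)$'' is off by a swap. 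Once you make these two corrections (and use $T\theta_{y,z}=\theta_{y,Tz}$ from Proposition~\ref{prop:adjointable}, read in the opposite algebra, to verify $\pr{x}{y}_r a=\pr{x}{ya}_r$), your argument coincides with the paper's.
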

\begin{proof}
Note that $E$ is a faithful right
$\mathscr{L}_l(E)^{\textrm{op}}$-module with $xT:=T(x)$. Consider the
ideal $E^r_0:=\gen\{\te_{y,z}:\, y,z\in  
E\}$ of $\mathscr{L}_l(E)^{\textrm{op}}$ and let
$\pr{\ }{\ }_r:E\times E\to E_0^r$ be given by
$\pr{x}{y}_r:=\theta_{x,y}$. It is routine to verify that 
$(\,E,E_0^r,\pr{\ }{\ }_r\,)$ is a full and admissible basic triple whose
associated $*$-tring is $E$. 
The second statement follows at once from \ref{lem:0} and
\ref{prop:adfaith}, while the last  
assertion of the theorem follows immediately from the second one.    
 \end{proof}

\begin{corollary}\label{cor:functor}
The assignment
\begin{equation*}
  (E\stackrel{\pi}{\to}F)\longmapsto
  (E,E_0^r,\pr{\,}{\,}_r)\stackrel{(\pi,\pi_0^r)}{\longmapsto}(F,F_0^r,\pr{\,}{\,}_r)
\end{equation*}
defines a functor from the category of admissible $*$-trings into the
category of admissible and full basic triples. 
\end{corollary}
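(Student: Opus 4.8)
The plan is to verify directly that the two assignments described in the statement are compatible with composition and identities, using Theorem~\ref{thm:*level} to supply the object-level and morphism-level data. First I would observe that the object assignment $E\mapsto (E,E_0^r,\pr{\,}{\,}_r)$ is well-defined by part~(1) of Theorem~\ref{thm:*level}: for each admissible $*$-tring $E$ the triple $(E,E_0^r,\pr{\,}{\,}_r)$ is a full and admissible basic triple whose associated $*$-tring is $E$ itself. For a \hm $\pi\colon E\to F$, part~(2) of the same theorem gives a \emph{unique} \hm of $*$-algebras $\pi_0^r\colon E_0^r\to F_0^r$ with $\pi_0^r(\pr{x}{y}_r)=\pr{\pi(x)}{\pi(y)}_r$ for all $x,y\in E$, and moreover the pair $(\pi,\pi_0^r)$ is a homomorphism of basic triples in the sense of Definition~\ref{defn:bt}. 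So the morphism assignment $\pi\mapsto(\pi,\pi_0^r)$ lands in the right category.

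Next I would check functoriality. For the identity: the pair $(\mathrm{id}_E,\mathrm{id}_{E_0^r})$ clearly satisfies $\mathrm{id}_{E_0^r}(\pr{x}{y}_r)=\pr{x}{y}_r=\pr{\mathrm{id}_E(x)}{\mathrm{id}_E(y)}_r$, so by the uniqueness clause in part~(2) of Theorem~\ref{thm:*level} we must have $(\mathrm{id}_E)_0^r=\mathrm{id}_{E_0^r}$, hence the functor sends identities to identities. For composition, given \hms $\pi\colon E\to F$ and $\rho\colon F\to G$, consider the composite $*$-algebra \hm $\rho_0^r\circ\pi_0^r\colon E_0^r\to G_0^r$. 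For all $x,y\in E$ we compute
\begin{equation*}
(\rho_0^r\circ\pi_0^r)(\pr{x}{y}_r)=\rho_0^r(\pr{\pi(x)}{\pi(y)}_r)=\pr{\rho(\pi(x))}{\rho(\pi(y))}_r=\pr{(\rho\circ\pi)(x)}{(\rho\circ\pi)(y)}_r,
\end{equation*}
so $\rho_0^r\circ\pi_0^r$ has exactly the defining property of $(\rho\circ\pi)_0^r$; by uniqueness, $(\rho\circ\pi)_0^r=\rho_0^r\circ\pi_0^r$. Therefore $(\rho\circ\pi,(\rho\circ\pi)_0^r)=(\rho\circ\pi,\rho_0^r\circ\pi_0^r)=(\rho,\rho_0^r)\circ(\pi,\pi_0^r)$ as homomorphisms of basic triples, which is exactly the functoriality of the assignment.

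I do not expect a serious obstacle here: the content has already been extracted in Theorem~\ref{thm:*level}, and what remains is only the bookkeeping that makes the word ``functor'' legitimate. The one point that deserves a word of care is that the target objects $(E_0^r,\pr{\,}{\,}_r)$ are only canonical ``up to canonical isomorphism'' (part~(3) of Theorem~\ref{thm:*level}), so strictly speaking one fixes once and for all the model $E_0^r=\gen\{\theta_{y,z}:y,z\in E\}$ inside $\mathscr{L}_l(E)^{\mathrm{op}}$ from the proof of that theorem; with that choice made, everything above is literal equality rather than equality up to isomorphism, and the uniqueness statements apply verbatim. Hence the assignment is a genuine functor from admissible $*$-trings to admissible and full basic triples, as claimed.
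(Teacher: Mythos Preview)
Your proof is correct and is essentially the approach the paper has in mind: the paper states this corollary without proof, treating it as an immediate consequence of Theorem~\ref{thm:*level}, and your argument simply makes explicit the routine verification (identities and composition via the uniqueness clause) that the paper leaves to the reader. Your remark about fixing the concrete model $E_0^r=\gen\{\theta_{y,z}\}\subseteq\mathscr{L}_l(E)^{\mathrm{op}}$ is exactly the right way to handle the ``up to canonical isomorphism'' caveat.
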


\begin{corollary}\label{cor:adfaith}
Let $(E,A,\pr{\ }{\ }_A)$ be a basic triple such that $E$ is
faithful as an $A$-module and $E$ is admissible as a $*$-tring. Then
there exists a unique homomorphism $\psi:E_0^r\to A$ such that
$\pr{x}{y}_r=\pr{x}{y}_A$, $\forall x,y\in E$. The 
homomorphism $\psi$ is injective, and it is an isomorphism if
$(E,A,\pr{\ }{\ }_A)$ is full.    
\end{corollary}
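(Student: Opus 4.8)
The plan is to derive everything from Lemma~\ref{lem:0} applied to the identity map. By Theorem~\ref{thm:*level}, the canonical triple $(E,E_0^r,\pr{\,}{\,}_r)$ is a full and admissible basic triple whose associated $*$-tring is precisely $E$. I would take this as the ``first'' (full) triple in Lemma~\ref{lem:0}, the given triple $(E,A,\pr{\,}{\,}_A)$ as the ``second'' triple --- which by hypothesis has $E$ admissible as a $*$-tring and faithful as an $A$-module, exactly what the lemma requires of the second triple --- and $\varphi=\mathrm{id}_E$ as the homomorphism between the associated $*$-trings (they are literally the same $*$-tring). Lemma~\ref{lem:0} then produces a unique homomorphism of $*$-algebras $\psi:E_0^r\to A$ with $\psi(\pr{x}{y}_r)=\pr{\varphi(x)}{\varphi(y)}_A=\pr{x}{y}_A$ for all $x,y\in E$. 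Uniqueness is in any case immediate, since $E_0^r=\gen\{\pr{x}{y}_r:x,y\in E\}$ and $\psi$ is prescribed on these generators.

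For injectivity I would invoke the last clause of Lemma~\ref{lem:0}, which gives injectivity of $\psi$ provided $\varphi$ is injective and $E$ is faithful as a module over $E_0^r$. The first condition is trivial as $\varphi=\mathrm{id}_E$. For the second, apply Proposition~\ref{prop:adfaith}(2) to the full and admissible basic triple $(E,E_0^r,\pr{\,}{\,}_r)$ coming from Theorem~\ref{thm:*level}: fullness together with admissibility of $E$ force $E$ to be a faithful $E_0^r$-module. Hence $\psi$ is injective.

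Finally, if $(E,A,\pr{\,}{\,}_A)$ is full, then $A=\gen\{\pr{x}{y}_A:x,y\in E\}$; since each $\pr{x}{y}_A=\psi(\pr{x}{y}_r)$ lies in the image of $\psi$, it follows that $\psi$ is surjective, and therefore an isomorphism of $*$-algebras.

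I do not expect a genuine obstacle here: all the substance is already contained in Lemma~\ref{lem:0} and Theorem~\ref{thm:*level}. The only point needing a little care is matching the hypotheses correctly --- the \emph{full} triple in Lemma~\ref{lem:0} must be the canonical one $(E,E_0^r,\pr{\,}{\,}_r)$, not $(E,A,\pr{\,}{\,}_A)$ --- and remembering to extract faithfulness of $E$ over $E_0^r$ from Proposition~\ref{prop:adfaith}(2) before applying the injectivity part of the lemma.
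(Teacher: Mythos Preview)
Your proof is correct and follows essentially the same route as the paper's: apply Lemma~\ref{lem:0} to the identity map, with $(E,E_0^r,\pr{\,}{\,}_r)$ as the full triple and $(E,A,\pr{\,}{\,}_A)$ as the target, then use faithfulness of $E$ over $E_0^r$ for injectivity and fullness for surjectivity. The only difference is that you spell out the reference to Proposition~\ref{prop:adfaith}(2) for the faithfulness of $E$ as an $E_0^r$-module, whereas the paper simply asserts it.
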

\begin{proof}
Let $(\,E,E_0^r,\pr{\ }{\ }_r\,)$ be the full and
admissible basic triple provided by Theorem~\ref{thm:*level}. The
identity map on $E$ is an injective homomorphism of $*$-trings, so 
by \ref{lem:0} there exists a unique homomorphism $\psi:E_0^r\to A$
such that $\pr{x}{y}_r=\pr{x}{y}_A$, $\forall x,y\in E$, which is
injective because $E$ is faithful as $E_0^r$-module. It is clear that
$\psi$ is also surjective when the given basic triple is full.
 \end{proof} 
\begin{corollary}\label{cor:adad}
Let $(E,A,\pr{\ }{\ }_A)$ be a full basic triple such that $E$ is 
faithful as a $A$-module. Then $A$ is admissible if
$E$ is admissible.
\end{corollary}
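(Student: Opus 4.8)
The plan is to recognize that the hypotheses are exactly those under which Corollary~\ref{cor:adfaith} identifies $A$ with the canonical $*$-algebra $E_0^r$ attached to the admissible $*$-tring $E$, and then to recall that $E_0^r$ is itself admissible by construction. So the corollary should follow with almost no extra work.

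Concretely, I would first apply Corollary~\ref{cor:adfaith} to the triple $(E,A,\pr{\ }{\ }_A)$. Its hypotheses are met, since $E$ is admissible as a $*$-tring and faithful as an $A$-module. Hence there is a unique $*$-algebra homomorphism $\psi\colon E_0^r\to A$ with $\psi(\pr{x}{y}_r)=\pr{x}{y}_A$ for all $x,y\in E$, and because the triple is moreover full, $\psi$ is an isomorphism of $*$-algebras. On the other hand, Theorem~\ref{thm:*level} asserts that the basic triple $(E,E_0^r,\pr{\ }{\ }_r)$ is admissible, which by Definition~\ref{defn:bt} says precisely that the $*$-algebra $E_0^r$ is admissible. (Equivalently, one sees this from Proposition~\ref{prop:adjointable}: $E_0^r=\gen\{\theta_{y,z}:y,z\in E\}$ is a subring of the admissible $*$-tring $\mathscr{L}_l(E)$, and a subring of an admissible $*$-tring is admissible, since the condition that $\{0\}$ be hermetic passes to subrings.)

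To finish, observe that admissibility of a $*$-algebra is the single condition $a^*a=0\Rightarrow a=0$, which is transported by any $*$-isomorphism. Therefore $A\cong E_0^r$ is admissible, as claimed. There is no genuine obstacle here; the only point I would highlight is that faithfulness of $E$ over $A$ is used in an essential way — it is what forces $\psi$ to be injective (through Lemma~\ref{lem:0}). With fullness alone $\psi$ would merely be surjective, exhibiting $A$ as a quotient of $E_0^r$, and admissibility is not inherited by arbitrary quotients, since the quotient by a non-hermetic ideal fails to be admissible.
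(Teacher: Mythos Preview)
Your proof is correct and follows essentially the same route as the paper: invoke Corollary~\ref{cor:adfaith} to obtain the isomorphism $E_0^r\cong A$, and then note that $E_0^r$ is admissible by Theorem~\ref{thm:*level}. Your additional remarks on why faithfulness is essential are accurate and a nice complement to the paper's terse one-line proof.
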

\begin{proof}
Just note that if $E$ is
admissible, then $E_0^r\cong A$ by \ref{cor:adfaith}, and $E_0^r$ is
admissible according to \ref{thm:*level}.
 \end{proof}
\begin{corollary}\label{cor:bobita}
Let $F$ be an ideal of the admissible $*$-tring $E$, $(E,E_0^r,\pr{\
}{\ }_E)$ and $(F,F_0^r,\pr{\ }{\ }_F)$ the full and admissible basic
triples associated, respectively, with $E$ and $F$ (given by Theorem \ref{thm:*level}). If $A:=\textrm{span}\{\pr{x}{y}_E:x,y\in F\}$, then
$A$ is a $*$-ideal of $E_0^r$, and the basic triples $(F,F_0^r,\pr{\ }{\ }_F)$ and $(F,A,\pr{\ }{\ }_E)$ are isomorphic.
\end{corollary}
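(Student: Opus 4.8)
The plan is to verify that $A$ is a $*$-ideal of $E_0^r$, then to recognize $(F,A,\pr{\ }{\ }_E)$ as a full and admissible basic triple whose associated $*$-tring is $F$, and finally to invoke the uniqueness clause of Theorem~\ref{thm:*level} (equivalently, Corollary~\ref{cor:adfaith}) to identify it canonically with $(F,F_0^r,\pr{\ }{\ }_F)$.

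By Theorem~\ref{thm:*level} we may take $E_0^r=\gen\{\pr{x}{y}_E:x,y\in E\}$, with multiplication governed by $\pr{x}{y}_E\,\pr{z}{w}_E=\pr{x}{(y,z,w)_E}_E$ (from $\pr{x}{y}_E a=\pr{x}{ya}_E$ and $y\,\pr{z}{w}_E=(y,z,w)_E$) and involution $\pr{x}{y}_E^*=\pr{y}{x}_E$. The involution formula shows $A$ is $*$-closed. If $x,y\in F$ and $z,w\in E$, then $(y,z,w)_E\in(F,E,E)\subseteq F$ because $F$ is an ideal of $E$, hence $\pr{x}{y}_E\,\pr{z}{w}_E=\pr{x}{(y,z,w)_E}_E\in A$; since the $\pr{z}{w}_E$ span $E_0^r$, this gives $A\,E_0^r\subseteq A$. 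A $*$-symmetric one-sided ideal of a $*$-algebra is automatically two-sided, so $A$ is a $*$-ideal of $E_0^r$. This passage from a right ideal to a two-sided one is the one spot where a small idea is needed; everything else is bookkeeping.

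Next, $F$ is a right $A$-module under the restriction of the $E_0^r$-action on $E$, since $f\,\pr{x}{y}_E=(f,x,y)_E\in F$ for $f,x,y\in F$ ($F$ being a subring of $E$); the identities $\pr{x}{y}_E a=\pr{x}{ya}_E$ and $\pr{x}{y}_E^*=\pr{y}{x}_E$ restrict to $F\times F$, so $(F,A,\pr{\ }{\ }_E)$ is a basic triple. It is full by the definition of $A$, and admissible because $A$ is a $*$-subalgebra of the admissible $*$-algebra $E_0^r$; moreover its associated $*$-tring is $F$, since $(f,g,h)\mapsto f\,\pr{g}{h}_E=(f,g,h)_E$ is precisely the ternary product $F$ inherits from $E$. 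By Proposition~\ref{prop:adfaith}(2) the module $F$ is faithful over $A$, so Corollary~\ref{cor:adfaith} applied to $(F,A,\pr{\ }{\ }_E)$ produces a unique homomorphism $\psi:F_0^r\to A$ with $\psi(\pr{x}{y}_F)=\pr{x}{y}_E$ for all $x,y\in F$, and $\psi$ is an isomorphism since $(F,A,\pr{\ }{\ }_E)$ is full. Then $(\mathrm{id}_F,\psi)$ is the asserted isomorphism of basic triples, the compatibility $\mathrm{id}_F(xb)=\mathrm{id}_F(x)\,\psi(b)$ being part of what Lemma~\ref{lem:0} provides.
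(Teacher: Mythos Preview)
Your proof is correct and follows essentially the same route as the paper: recognize $(F,A,\pr{\ }{\ }_E)$ as a full admissible basic triple with associated $*$-tring $F$, invoke Proposition~\ref{prop:adfaith}(2) for faithfulness, and then apply Corollary~\ref{cor:adfaith} to obtain the isomorphism $\psi:F_0^r\to A$. Your argument is in fact more complete than the paper's, which omits the explicit verification that $A$ is a $*$-ideal of $E_0^r$.
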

\begin{proof}
The triple $(F,A,\pr{\ }{\ }_r)$ is admissible and full, with $F$ as
induced $*$-tring. Then $F$ is a faithful $A$-module by
\ref{prop:adfaith}. According to \ref{cor:adfaith}, there exists a
unique map $\psi:F_0^r\to A$ such that 
$(id,\psi)$ is a homomorphism from $(F,F_0^r,\pr{\ }{\ }_F)$ to
$(F,A,\pr{\ }{\ }_E)$, and $\psi$ is an isomorphism o $*$-algebras. It
follows that $(id,\psi^{-1})$ is the inverse homomorphism of
$(id,\psi)$.   
 \end{proof}
\par From now on if $F$ is an ideal in the admissible $*$-tring $E$, we
will think of $F_0^r$ as a $*$-ideal of $E_0^r$ via the identification
provided by \ref{cor:bobita}: 
\begin{equation}\label{eq:bobita}F_0^r\cong
  \textrm{span}\{\pr{x}{y}_E:x,y\in F\}.\end{equation}

For the next result recall that an ideal $F$ of the $*$-tring $E$ is
hermetic if and only if $E/F$ is admissible.

\begin{proposition}\label{prop:quotient0}
Let $\pi:E\to F$ be a homomorphism between the admissible $*$-trings
$E$ and $F$, such that $\ker\pi$ is hermetic. If $I_{\ker\pi}:=\{a\in
E_0^r: Ea\subseteq\ker\pi\}$, then:
\begin{equation*}
  (\ker\pi)_0^r\subseteq\ker\pi_0^r\subseteq I_{\ker\pi}
\end{equation*}
\end{proposition}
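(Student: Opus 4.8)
The plan is to verify the two inclusions separately, each being an essentially formal consequence of the functoriality set up in Theorem~\ref{thm:*level}, together with the identification \eqref{eq:bobita} of $(\ker\pi)_0^r$ as a $*$-ideal of $E_0^r$. First I would record that, since $E$ is admissible, $\ker\pi$ is an admissible $*$-tring in its own right (a subring of an admissible $*$-tring is admissible), so that $(\ker\pi)_0^r$ is defined via Theorem~\ref{thm:*level}; the hypothesis that $\ker\pi$ be hermetic is in fact automatic in this situation and enters only implicitly. By \eqref{eq:bobita} we may — and will — regard $(\ker\pi)_0^r$ as the linear span of $\{\pr{x}{y}_r : x,y\in\ker\pi\}$ inside $E_0^r$, and $\pi_0^r\colon E_0^r\to F_0^r$ as the homomorphism of Theorem~\ref{thm:*level}(2), characterized by $\pi_0^r(\pr{x}{y}_r)=\pr{\pi(x)}{\pi(y)}_r$ for all $x,y\in E$, and satisfying $\pi(xb)=\pi(x)\pi_0^r(b)$ for all $x\in E$, $b\in E_0^r$.

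For the inclusion $(\ker\pi)_0^r\subseteq\ker\pi_0^r$, I would take $a\in(\ker\pi)_0^r$ and, via the identification above, write $a=\sum_{j=1}^n\pr{x_j}{y_j}_r$ with $x_j,y_j\in\ker\pi$. Applying $\pi_0^r$ and its defining identity gives $\pi_0^r(a)=\sum_j\pr{\pi(x_j)}{\pi(y_j)}_r=\sum_j\pr{0}{0}_r=0$, so $a\in\ker\pi_0^r$.

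For the inclusion $\ker\pi_0^r\subseteq I_{\ker\pi}$, I would take $a\in\ker\pi_0^r\subseteq E_0^r$ and use that $(\pi,\pi_0^r)$ is a homomorphism of basic triples: for every $x\in E$ one has $\pi(xa)=\pi(x)\pi_0^r(a)=\pi(x)\cdot 0=0$, whence $xa\in\ker\pi$. As $x$ was arbitrary, $Ea\subseteq\ker\pi$, that is, $a\in I_{\ker\pi}$.

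Neither step is a real obstacle; the only point needing a little care is the bookkeeping around \eqref{eq:bobita}, namely that the copy of $(\ker\pi)_0^r$ sitting inside $E_0^r$ is exactly $\gen\{\pr{x}{y}_r : x,y\in\ker\pi\}$, and that for $x,y\in\ker\pi$ the generator $\pr{x}{y}_r=\theta_{x,y}$ of $(\ker\pi)_0^r$ agrees, as an element of $E_0^r$, with the corresponding generator of $E_0^r$ — which is precisely the content of Corollary~\ref{cor:bobita}. Once this is in place both inclusions are immediate, and in particular the argument uses neither surjectivity of $\pi$, nor fullness of $F$, nor any hermeticity of $\ker\pi$ beyond what is automatic.
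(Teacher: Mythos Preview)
Your argument is correct and is essentially the paper's own proof, just with the citations unwound: the paper obtains the first inclusion by combining \eqref{eq:bobita} with Theorem~\ref{thm:*level}(2), exactly as you do, and for the second inclusion it invokes the first inclusion of \eqref{eq:ker} in Lemma~\ref{lem:0}, whose proof is verbatim the line $\pi(xa)=\pi(x)\pi_0^r(a)=0$ that you wrote out. Your side remark that hermeticity of $\ker\pi$ is automatic (because $F$ is admissible) is also correct and worth noting; the paper records the hypothesis and cites ``admissibility of $E/\ker\pi$'', but for the bare inclusion $\ker\pi_0^r\subseteq I_{\ker\pi}$ this is not actually used.
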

\begin{proof}
Taking into account \eqref{eq:bobita} above and the second part of
\ref{thm:*level}, the first inclusion is clear. The second inclusion
follows from the admissibility of $E/\ker\pi$ and \eqref{eq:ker} in 
Lemma~\ref{lem:0}.  
 \end{proof}

\begin{remark}\label{rem:iso1}
Suppose $F$ is an hermetic ideal of the admissible $*$-tring $E$. Let
$q:E\to E/F$ be the quotient map, $I_F:=\{a\in E_0^r: Ea\subseteq
F\}$, $p:E_0^r\to E_0^r/I_F$ the canonical projection and
$\overline{q_0^r}:E_0^r/I_F\to (E/F)_0^r$  
the isomorphism induced by $q_0^r$, so the following diagram commutes:  

\begin{equation*}
  \xymatrixcolsep{2ex}\xymatrixrowsep{2ex}\xymatrix{
    E_0^r\ar[rd]_{p}\ar@/^0pc/[urrd]^{q_0^r} 
    &&(E/F)_0^r\\
    &E_0^r/I_F\ar[ru]_{\overline{q_0^r}}&}
\end{equation*}

   Then:  
\begin{equation*}
  \overline{q_0^r}(p(\pr{x}{y}_E))
    =q_0^r(\pr{x}{y}_E))
    =\pr{q(x)}{q(y)}_{E/F},
    \quad\forall x,y\in~E.
\end{equation*}
Therefore the pair $((E/F)_0^r.\pr{\,}{\,}_{E/F})$ associated with
$E/F$ in Theorem~\ref{thm:*level} may be replaced by the pair
$(E_0^r/I_F, [\,,\,]_{E/F})$., where
$[q(x),q(y)]_{E/F}=p(\pr{x}{y}_E)$, $\forall x,y\in E$ and the action
of $E_0^r/I_F$ on $E/F_\gamma$ is given by $q(x)p(a)=q(xa)$, $\forall
x\in E$, $a\in A$.
\end{remark}

\begin{proposition}\label{prop:*(pi)0}
Let $\pi:E\to F$ be a \hm\ between admissible $*$-trings. Then: 
\be
 \item  $\pi$ is injective if and only if $\pi_0^r:E_0^r\to F_0^r$ is
   injective. 
 \item If $\pi$ is onto, or an isomorphism, then so is
   $\pi_0^r:E_0^r\to F_0^r$. 
\ee
\end{proposition}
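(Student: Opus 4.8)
The plan is to derive everything from Theorem~\ref{thm:*level}, Lemma~\ref{lem:0}, and Proposition~\ref{prop:adfaith}, together with the fact that $E_0^r$ and $F_0^r$ are spanned by the operators $\theta_{x,y}=\pr{x}{y}_r$. Recall first that, by Theorem~\ref{thm:*level}, both $(E,E_0^r,\pr{\,}{\,}_r)$ and $(F,F_0^r,\pr{\,}{\,}_r)$ are full and admissible basic triples, so by Proposition~\ref{prop:adfaith}(2) the module $E$ is faithful over $E_0^r$ and $F$ is faithful over $F_0^r$; moreover $\pi_0^r$ is exactly the homomorphism $\psi$ produced by Lemma~\ref{lem:0} applied to $\varphi=\pi$, characterized by $\pi_0^r(\pr{x}{y}_r)=\pr{\pi(x)}{\pi(y)}_r$ for all $x,y\in E$.

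For part (1), the forward implication is immediate: if $\pi$ is injective then, $E$ being faithful as an $E_0^r$-module, the last sentence of Lemma~\ref{lem:0} gives that $\pi_0^r$ is injective. For the converse, suppose $\pi_0^r$ is injective and take $x\in\ker\pi$. Then $\pi_0^r(\pr{x}{x}_r)=\pr{\pi(x)}{\pi(x)}_r=0$, hence $\pr{x}{x}_r=\theta_{x,x}=0$; evaluating this endomorphism at $x$ yields $(x,x,x)=\theta_{x,x}(x)=0$, and since $E$ is admissible we conclude $x=0$. Thus $\ker\pi=\{0\}$.

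For part (2), assume $\pi$ is onto. Given any generator $\theta_{u,v}=\pr{u}{v}_r$ of $F_0^r$ with $u,v\in F$, choose $x,y\in E$ with $\pi(x)=u$ and $\pi(y)=v$; then $\pi_0^r(\pr{x}{y}_r)=\pr{u}{v}_r=\theta_{u,v}$. Since such elements span $F_0^r$ and $\pi_0^r$ is linear, $\pi_0^r$ is onto. Finally, if $\pi$ is an isomorphism then it is in particular injective and surjective, so by part (1) and the previous sentence $\pi_0^r$ is a bijective homomorphism of $*$-algebras, hence an isomorphism; alternatively, applying the functor of Corollary~\ref{cor:functor} to $\pi^{-1}$ and invoking the uniqueness clause in Theorem~\ref{thm:*level}(2) shows that $(\pi^{-1})_0^r$ is a two-sided inverse of $\pi_0^r$.

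I do not expect a genuine obstacle here; the one place that deserves a moment's care is the converse in part (1), where one must remember that $\theta_{x,x}=0$ already forces $(x,x,x)=0$ and then invoke admissibility of $E$ directly, rather than trying to argue inside $E_0^r$.
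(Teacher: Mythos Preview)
Your proof is correct and follows essentially the same route as the paper: the paper also invokes Lemma~\ref{lem:0} for the forward direction of (1), proves the converse via the chain $\pi(x)=0\Rightarrow\pi_0^r(\pr{x}{x}_r)=0\Rightarrow\pr{x}{x}_r=0\Rightarrow x=0$ using admissibility, and simply declares (2) to be clear. Your write-up is a bit more explicit (in particular you spell out why $\pr{x}{x}_r=0$ forces $(x,x,x)=0$ and give the generator argument for surjectivity), but there is no substantive difference in strategy.
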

\begin{proof}
Since the second statement is clear we prove only the first 
one. Now if $\pi_0^r$ is injective and  $x\in E$, the
admissibility of $E$ and $F$ implies that: 
\begin{equation*}
  \pi(x)=0\iff \pr{\pi(x)}{\pi(x)}_r=0\iff \pi_0^r(\pr{x}{x}_r)=0\iff x=0,
\end{equation*} so $\pi$ 
is injective as well. On the other hand the injectivity of $\pi$
implies that of $\pi_0^r$ by \ref{lem:0}. 
 \end{proof}

\section{Correspondence between \texorpdfstring{$C^*$}{C*}-seminorms.}

\subsection{\texorpdfstring{$C^*$}{C*}-seminorms.}\label{subsubsec:seminorms} 
\begin{definition}\label{defn:cnorm}
A $C^*$-seminorm on a $*$-tring $(E,\mu )$ is a seminorm such that: 
\be
\item $\norm{\mu (x,y,z)}\leq\norm{x}\,\norm{y}\,\norm{z}$, 
      $\forall x,y,z\in E$. 
\item\label{item:identity for norm of trings} $\norm{\mu (x,x,x)}=\norm{x}^3$, $\forall x\in E$.
\ee
If $\norm{\cdot}$ is a norm, we call it a $C^*$-norm, and we say that
$(E,\norm{\cdot})$ is a pre-$C^*$-ternary ring. If $(E,\norm{\cdot})$
is also a Banach space, we say that it is a $C^*$-ternary ring, or
just a \ct.   
\end{definition}
\par If $E$ is a \st, the set of $C^*$-seminorms on $E$ will be
denoted by $\msn(E)$, and $\mn(E)$ will denote the set of $C^*$-norms
on $E$. The set $\msn(E)$ is partially ordered by:
$\gamma_1\leq\gamma_2$ if $\gamma_1(x)\leq\gamma_2(x)$, $\forall x\in
E$.
\begin{definition}\label{defn:closable}
A $*$-tring $E$ will be called $C^*$-closable, or just closable, in
case $\mn(E)\neq\emptyset$. Similar terminology will be used for
$*$-algebras.  
\end{definition}

\par Observe that any $C^*$-closable $*$-tring is admissible. 

\par In the next proposition, whose easy proof is left to the reader,
we record some basic facts about $*$-trings. 
\begin{proposition}
Let $E$ be a $*$-tring. Then:
\be
 \item $N_\gamma:=\{x\in E:\gamma(x)=0\}$ is an hermetic ideal 
   of $E$, for all $\gamma\in\msn(E)$. 
 \item The intersection of hermetic subrings is also hermetic. 
 \item The quotient $E/N$ is admissible, where
   $N:=\cap\{N_\gamma:\gamma\in \msn(E)\}$ and $N_\gamma$ is as in
   1.  
 \item If $\msn(E)$ separates points of $E$, then $E$ is admissible. 
 \item If $\msn(E)$ separates points of $E$ and is bounded, then $E$
   is $C^*$-closable.  
\ee
\end{proposition}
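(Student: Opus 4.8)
The plan is to derive all five assertions directly from the two defining properties of a $C^*$-seminorm on a \st, the decisive one being the cubic identity $\norm{\mu(x,x,x)}=\norm{x}^3$ of Definition~\ref{defn:cnorm}.

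For item~1, fix $\gamma\in\msn(E)$ and set $N_\gamma=\{x\in E:\gamma(x)=0\}$, which is a subspace as the kernel of a seminorm. Submultiplicativity of $\gamma$ shows that $N_\gamma$ is an ideal: if any one of $x,y,z$ lies in $N_\gamma$ then $\gamma(\mu(x,y,z))\le\gamma(x)\gamma(y)\gamma(z)=0$, so $(E,E,N_\gamma)$, $(E,N_\gamma,E)$ and $(N_\gamma,E,E)$ are all contained in $N_\gamma$; in particular $N_\gamma$ is a subring. For hermeticity, the implication $x\in N_\gamma\Rightarrow\mu(x,x,x)\in N_\gamma$ is a special case of the ideal property, while conversely $\mu(x,x,x)\in N_\gamma$ gives $\gamma(x)^3=\gamma(\mu(x,x,x))=0$, hence $x\in N_\gamma$.

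For items~2 and~3, let $\{F_i\}_i$ be a family of hermetic subrings and $F=\bigcap_i F_i$. As an intersection of $\mu$-invariant subspaces, $F$ is a subring; moreover $x\in F$ iff $x\in F_i$ for all $i$ iff $\mu(x,x,x)\in F_i$ for all $i$ (here I use that each $F_i$ is a hermetic subring) iff $\mu(x,x,x)\in F$, so $F$ is hermetic. For item~3, each $N_\gamma$ is a hermetic ideal by item~1, and an intersection of ideals is again an ideal, so $N$ is a hermetic ideal (if $\msn(E)=\emptyset$ then $N=E$ and $E/N=\{0\}$ is trivially admissible); since $E/F$ is admissible whenever $F$ is a hermetic ideal — the observation recorded after Definition~\ref{defn:*ideals} — the quotient $E/N$ is admissible.

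For items~4 and~5 the hypotheses on $\msn(E)$ come in. If $\msn(E)$ separates the points of $E$, then $N=\bigcap_\gamma N_\gamma=\{0\}$, so $E=E/N$ is admissible by item~3; equivalently, $\{0\}$ being a hermetic ideal is precisely what Definition~\ref{defn:admiss} calls admissibility. For item~5, assume in addition that $\msn(E)$ is bounded, which I read as $\norm{x}:=\sup\{\gamma(x):\gamma\in\msn(E)\}<\infty$ for every $x\in E$. Then $\norm{\cdot}$ is a seminorm, being a pointwise supremum of seminorms; it is a norm because separation of points forces $\norm{x}=0\Rightarrow x=0$; the inequality $\norm{\mu(x,y,z)}\le\norm{x}\,\norm{y}\,\norm{z}$ follows from the corresponding one for each $\gamma$ together with $\sup_\gamma(a_\gamma b_\gamma c_\gamma)\le(\sup_\gamma a_\gamma)(\sup_\gamma b_\gamma)(\sup_\gamma c_\gamma)$ for nonnegative families; and the cubic identity survives the supremum since $\sup_\gamma\gamma(x)^3=(\sup_\gamma\gamma(x))^3$. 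Hence $\norm{\cdot}\in\mn(E)$ and $E$ is $C^*$-closable. There is no genuine obstacle in any of this; the only points requiring the slightest attention are the intended meaning of ``bounded'' in item~5 — pointwise boundedness is exactly what makes the supremum finite-valued — and the elementary fact that passing to a supremum preserves both axioms of a $C^*$-seminorm.
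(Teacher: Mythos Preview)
Your proof is correct and complete. The paper itself leaves this proposition to the reader (``whose easy proof is left to the reader''), so there is no argument to compare against; your derivation of each item directly from the submultiplicativity and cubic identity of Definition~\ref{defn:cnorm} is exactly the intended elementary verification.
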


\par If $H$ and $K$ are Hilbert spaces and $B(H,K)$ denotes the
corresponding space of bounded linear maps, a subspace $E$ of $B(H,K)$
closed under the ternary product $(R,S,T)\mapsto RS^*T\in E$, $\forall 
R,S,T\in E$, is a $*$-tring with that product.
In case $E$ is also closed it is called a ternary ring of operators
(TRO).
Note that if $(E,\mu)$ is a \ct, then $(E,-\mu)$ also is a \ct, called the
opposite of $(E,\mu)$ and denoted by $E^{\text{op}}$. The opposite
of a TRO is called anti-TRO.  
\par New $C^*$-ternary rings can be obtained by direct sums: if 
$(E,\norm{\cdot}_E,\mu_E)$ and $(F,\norm{\cdot}_F,\mu_F)$ are \cts,
then $(E\oplus
F,\max\{\norm{\cdot}_E,\norm{\cdot}_F\},\mu_E\oplus\mu_F)$ 
is a \ct. We denote it just by $E\oplus F$.      
\par Suppose that $E$ is a full right Hilbert $A$-module, and
define the ternary product on $E$: $\mu_E(x,y,z):=x\pr{y}{z}$. Then 
$(E,\mu_E)$ is a \ct with the norm $\norm{x}=\sqrt{\pr{x}{x}}$. Now,
if $F$ is a full right Hilbert $B$-module, then $E\oplus
F^{\text{op}}$ is also a \ct. This is the fundamental example of 
\ct, as shown by Zettl\label{zettl} in \cite[3.2]{z2} (see also
Corollay~\ref{cor:cbt} below).
\par Zettl also showed that there exist unique sub-\cts $E_+$ and 
$E_-$ of $E$ such that $E=E_+\bigoplus E_-$, and $E_+$ is isomorphic
to a TRO, while $E_-$ is isomorphic to an anti-TRO (see \cite{z2}). 
The decomposition above is called the \textit{fundamental
decomposition} of $E$.\label{decfund} 
\begin{definition}\label{defn:+}
We say that a \ct $E$ is positive (negative) if $E=E_+$ (respectively:
if $E=E_-$). 
\end{definition}  
\par If $E$ is a \ct, we define $E_p:=E_+\oplus
E_-^{\text{op}}$. Then $E_p$ is a positive \ct. 
\par Let $E^*$ be the reverse $*$-tring of (the *-tring) $E$. It is clear that a norm
on $E$ is a $C^*$-norm if and only if is a $C^*$-norm on
$E^*$. Moreover, $E$ is a (positive) \ct if and only if so is $E^*$.  

\subsection{From pre-\texorpdfstring{$C^*$}{C*}-trings to
  pre-\texorpdfstring{$C^*$}{C*}-algebras}\label{subsec:ctrings}  
In what follows we will examine an
intermediate situation between the $*$-algebraic context of
\ref{thm:*level} and the $C^*$-context originally considered by Zettl.

If $\alpha$ is a seminorm on the vector space $X$, then 
$N_\alpha:=\{x\in X:\, \alpha(x)=0\}$ is a closed subspace of $X$, so
$X/N_\alpha$ is a normed space with the norm $\bar{\alpha}$ induced by
$\alpha$: $\bar{\alpha}(x+N_\alpha)=\alpha(x)$. The completion
$(X_\alpha,\bar{\alpha})$ of $(X/N_\alpha,\bar{\alpha})$ will be
referred to as the \textit{Hausdorff completion} of the seminormed space
$(X,\alpha)$, and the map $x\mapsto x+N_\alpha$ will be called the
canonical map. 

\par In case $\gamma$ is a $C^*$-seminorm on the ternary ring $E$, 
then $E/N_\gamma$ is a
pre-$C^*$-tring with the induced norm $\bar{\gamma}$. Thus the
corresponding Hausdorff completion $E_\gamma$ of $E$ is a $C^*$-tring.    
\begin{proposition}\label{prop:semi*level}
Suppose $E$ is an admissible $*$-tring and $\ga\in\msn(E)$. Let  
$\ga^r:E_0^r\to [0,\infty)$ be the 
   operator seminorm on $E_0^r$, that is: 
   \begin{equation}\label{eq:defphi}
   \ga^r(a):=\sup\{\ga(xa):\,\ga(x)\leq 1\}.\end{equation} 
   Then $\ga^r\in\msn(E_0^r)$, and
   $\ga^r\in\mn(E_0^r)\iff\ga\in\mn(E)$. Moreover the following
   relations hold: 
\begin{gather}
\gamma(xa)\leq\gamma(x)\gamma^r(a), \forall x\in E, a\in E^r_0\label{eq:op.sem}\\
\gamma^r(\pr{x}{y}_r)\leq\gamma(x)\gamma(y), \forall x,y\in E\label{eq:cs}\\
\ga(x)^2=\ga^r(\pr{x}{x}_r), \forall x\in E \label{eq:phi.inj}
\end{gather}   
\end{proposition}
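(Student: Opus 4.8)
The plan is to verify each claimed relation in a natural order, using admissibility of $E$ (and hence the uniqueness of left adjoints) and the fact that $E_0^r$ acts faithfully on $E$ via $xa = \theta$-type operators, with $\pr{x}{y}_r = \theta_{x,y}$.

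First I would establish that $\ga^r$ is finite and is a seminorm. Finiteness follows because $\ga$ is a $C^*$-seminorm: for $a = \sum_j \pr{x_j}{y_j}_r = \sum_j \theta_{x_j,y_j}$ one has $\ga(xa) = \ga((x,x_j,y_j)\text{-sum}) \le \sum_j \ga(x)\ga(x_j)\ga(y_j)$, so the supremum defining $\ga^r(a)$ is bounded by $\sum_j \ga(x_j)\ga(y_j) < \infty$. Subadditivity and homogeneity of $\ga^r$ are immediate from the definition as a supremum of the seminorm $\ga$ composed with a linear action, and \eqref{eq:op.sem} is just the defining inequality \eqref{eq:defphi} unwound (with the convention that it holds trivially when $\ga(x)=0$, using that $N_\ga$ is an ideal so $\ga(xa)=0$ there). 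That $\ga^r$ is submultiplicative and satisfies $\ga^r(a^*a) = \ga^r(a)^2$ — i.e. is a $C^*$-seminorm — I would get by a standard operator-seminorm argument: submultiplicativity is clear from \eqref{eq:op.sem}, $\ga^r(a^*a) \le \ga^r(a)^2$ is clear, and the reverse inequality $\ga^r(a)^2 \le \ga^r(a^*a)$ comes from $\ga(xa)^2$, which I would want to bound by $\ga(x)^2 \ga^r(a^*a)$; the cleanest route to this is precisely relation \eqref{eq:phi.inj}, so I would prove that first.

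The heart of the matter is \eqref{eq:phi.inj}, $\ga(x)^2 = \ga^r(\pr{x}{x}_r)$. The $\le$ direction is the substantive one. Here I would use the $C^*$-identity \eqref{item:identity for norm of trings} of Definition~\ref{defn:cnorm}: $\ga(x)^3 = \ga((x,x,x)) = \ga(x \pr{x}{x}_r) \le \ga(x)\,\ga^r(\pr{x}{x}_r)$ by \eqref{eq:op.sem}, so after dividing by $\ga(x)$ (handling $\ga(x)=0$ separately, where everything vanishes) we get $\ga(x)^2 \le \ga^r(\pr{x}{x}_r)$. For the reverse inequality $\ga^r(\pr{x}{x}_r) \le \ga(x)^2$: note $\pr{x}{x}_r = \theta_{x,x}$ is self-adjoint, and for any $y$, $\ga(y\,\theta_{x,x}) = \ga((y,x,x)) \le \ga(y)\ga(x)^2$, so taking the supremum over $\ga(y)\le 1$ gives exactly $\ga^r(\theta_{x,x}) \le \ga(x)^2$. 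This also yields the Cauchy–Schwarz-type bound \eqref{eq:cs}: $\ga^r(\pr{x}{y}_r) = \ga^r(\theta_{x,y})$, and $\ga(z\,\theta_{x,y}) = \ga((z,x,y)) \le \ga(z)\ga(x)\ga(y)$, so $\ga^r(\theta_{x,y}) \le \ga(x)\ga(y)$.

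Finally, the equivalence $\ga^r \in \mn(E_0^r) \iff \ga \in \mn(E)$ falls out of \eqref{eq:phi.inj} together with admissibility and the structure of $E_0^r$. If $\ga$ is a norm and $\ga^r(a)=0$, then $\ga(xa)=0$ for all $x$, i.e. $xa=0$ for all $x$ by faithfulness (via \eqref{eq:op.sem} and $\ga$ a norm), hence $a=0$ since $E_0^r$ acts faithfully on $E$ (Theorem~\ref{thm:*level}); so $\ga^r$ is a norm. Conversely, if $\ga^r$ is a norm and $\ga(x)=0$, then $\ga^r(\pr{x}{x}_r)=\ga(x)^2=0$ by \eqref{eq:phi.inj}, so $\pr{x}{x}_r = \theta_{x,x}=0$, whence $(x,x,x) = x\,\theta_{x,x} = 0$, and admissibility of $E$ forces $x=0$; so $\ga$ is a norm. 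The main obstacle is organizing the $C^*$-seminorm verification for $\ga^r$ so that it rests cleanly on \eqref{eq:phi.inj} rather than on a circular appeal to properties of $\ga^r$ not yet established; proving \eqref{eq:phi.inj} early, directly from the ternary $C^*$-identity, removes that difficulty.
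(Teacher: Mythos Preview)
Your handling of finiteness of $\ga^r$, of \eqref{eq:op.sem}, \eqref{eq:cs}, \eqref{eq:phi.inj}, and of the equivalence $\ga^r\in\mn(E_0^r)\iff\ga\in\mn(E)$ is correct and matches the paper. The gap is in your verification that $\ga^r$ satisfies the $C^*$-identity.

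First, the claim ``$\ga^r(a^*a)\le\ga^r(a)^2$ is clear'' is not justified: submultiplicativity only gives $\ga^r(a^*a)\le\ga^r(a^*)\ga^r(a)$, and at this stage you have no reason to know $\ga^r(a^*)\le\ga^r(a)$. More seriously, your proposed route to $\ga^r(a)^2\le\ga^r(a^*a)$ via \eqref{eq:phi.inj} does not go through. Relation \eqref{eq:phi.inj} gives $\ga(xa)^2=\ga^r\big(\pr{xa}{xa}_r\big)=\ga^r\big(a^*\pr{x}{x}_r a\big)$, but there is no way to bound $\ga^r(a^*\pr{x}{x}_r a)$ by $\ga^r(\pr{x}{x}_r)\,\ga^r(a^*a)$ using only submultiplicativity; that step would require an inequality of the type $a^*ba\le\ga^r(b)\,a^*a$, i.e.\ positivity of $\pr{x}{x}_r$, which is not available for a general admissible $*$-tring.

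The paper obtains the $C^*$-identity by a direct use of the ternary associativity law instead. Writing $a=\sum_i\pr{x_i}{y_i}_r$ and using $\mu(\mu(x,y,z),u,v)=\mu(x,\mu(u,z,y),v)$ one computes
\[
(xa,xa,xa)=\sum_i\big((x,x_i,y_i),xa,xa\big)=\sum_i\big(x,(xa,y_i,x_i),xa\big)=(x,\,xaa^*,\,xa),
\]
whence $\ga(xa)^3\le\ga(x)\,\ga(xaa^*)\,\ga(xa)\le\ga(x)^3\,\ga^r(aa^*)\,\ga^r(a)$ and therefore $\ga^r(a)^2\le\ga^r(aa^*)$. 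Combined with submultiplicativity this yields $\ga^r(a)\le\ga^r(a^*)$, hence equality by symmetry, and then the full $C^*$-identity. This ternary rewriting of $(xa,xa,xa)$ is the missing ingredient in your sketch.
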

\begin{proof}
Given $a=\sum_{i=1}^n\pr{x_i}{y_i}\in E^r_0$ the linear map $x\mapsto xa$ is bounded because $\ga(xa)\leq \ga(x)\sum_{i=1}^n\ga(x_i)\ga(y_i).$
Then \eqref{eq:op.sem} and \eqref{eq:cs} follow immediately and Definition \ref{defn:cnorm} implies \eqref{eq:phi.inj}.
With $a\in E^0_r$ as before and $x\in E$ we have  
\begin{equation*}
  (xa,xa,xa)
    =\sum_{i=1}^n((x,x_i,y_i),xa,xa)
    =\sum_{i=1}^n(x,(xa,y_i,x_i),xa),
\end{equation*} so
\begin{equation*}
  \ga(xa)^3
    =\ga(x,xaa^*,xa)
    \leq\ga^r(aa^*)\,\ga^r(a)\,\ga(x)^3, 
\end{equation*}
from where it follows that $\ga^r(a)^2\leq\ga^r(aa^*)\leq \ga^r(a)\ga^r(a^*)$.   
From the computations above is clear that
$\ga^r\in\mn(E_0^r)\iff\ga\in\mn(E).$
In particular $E_0^r$ is a $C^*$-closable algebra whenever $E$ is a
  $C^*$-closable tring.
 \end{proof}

\begin{definition}\label{defn:cbt}
Suppose $(E,A,\pr{\,}{\,}_A)$ is a basic triple such that $(E,\gamma)$
is a $C^*$-tring and a Banach module over the $C^*$-algebra
$(A,\alpha)$, and that $\pr{\,}{\,}_A:E\times E\to A$ is
continuous. Then the triple is said to be a $C^*$-basic triple. We say
that it is full if the ideal $\textrm{span}\{\pr{x}{y}_A:\, x,y\in
E\}$ of $A$ is dense in $A$.  
\end{definition}
\par The next two results will be useful for studying the relation
between a $C^*$-basic triple $(E,A,\pr{\,}{\,}_A)$ and the basic
triple $(E,E_0^r,\pr{\,}{\,}_r)$. What we will show first, in
\ref{prop:cbt}, 
is that $(E,E_0^r,\pr{\,}{\,}_r)$ can be embedded in
$(E,A,\pr{\,}{\,}_A)$.  

\begin{proposition}\label{prop:extbanach}
Let $A$ be a Banach $*$-algebra and $I$ a $*$-ideal of $A$, not
necessarily closed. Then any $C^*$-seminorm on $I$ can be extended to
a $C^*$-seminorm on $A$. If $I$ is dense, such extension is unique.
\end{proposition}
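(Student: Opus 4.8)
The plan is to produce the extension by a supremum-of-translates formula, exactly mimicking the construction of the operator seminorm $\gamma^r$ in Proposition~\ref{prop:semi*level}, but now using the ambient multiplication by elements of $A$. Concretely, if $\gamma$ is a $C^*$-seminorm on the $*$-ideal $I$, I would define $\tilde\gamma:A\to[0,\infty)$ by $\tilde\gamma(a):=\sup\{\gamma(xa):x\in I,\ \gamma(x)\le 1\}$. The first thing to check is that this is finite: since $I$ is an ideal, $xa\in I$ for $x\in I$, and one needs the map $x\mapsto xa$ to be $\gamma$-bounded. For a single element this follows from submultiplicativity of $\gamma$ together with continuity of multiplication in the Banach $*$-algebra $A$; the cleanest way is to note that $\gamma$ is automatically continuous for the Banach-algebra norm on $I$ (a $C^*$-seminorm on a Banach $*$-algebra is automatically bounded by the norm — a standard fact one may invoke, or prove via the spectral-radius estimate $\gamma(x)^2=\gamma(x^*x)=\lim\gamma((x^*x)^{2^n})^{1/2^n}\le \|x^*x\|\le\|x\|^2$), whence $\gamma(xa)\le\|x\|\,\|a\|$ and more importantly, using the $C^*$-identity on $I$, $\gamma(xa)^2=\gamma(a^*x^*xa)\le \gamma(a^*x^*)\gamma(xa)$ etc., giving $\gamma(xa)\le$ something controlled by $\gamma(x)$ and a fixed constant depending on $a$. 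So $\tilde\gamma(a)<\infty$.

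Next I would verify the $C^*$-seminorm axioms for $\tilde\gamma$. Subadditivity and homogeneity are immediate from the formula. For submultiplicativity, $\tilde\gamma(ab)=\sup_{\gamma(x)\le1}\gamma(xab)\le\sup_{\gamma(x)\le1}\gamma(xa)\,\tilde\gamma(b)=\tilde\gamma(a)\tilde\gamma(b)$ by inserting $xa\in I$ into the defining supremum for $\tilde\gamma(b)$, after normalizing. The $*$-compatibility $\tilde\gamma(a^*)=\tilde\gamma(a)$ and the $C^*$-identity $\tilde\gamma(a^*a)=\tilde\gamma(a)^2$ are the crux: here one argues as in the proof of Proposition~\ref{prop:semi*level}, using the $C^*$-identity already available on $I$. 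For $x\in I$ with $\gamma(x)\le1$ one has $\gamma(xa)^2=\gamma(a^*x^*xa)\le\gamma(a^*x^*)\,\gamma(xa)$, and also $\gamma(xa)^2=\gamma(a^*x^*xa)\le\tilde\gamma(a^*a)\gamma(x^*)$ wait — more carefully: $\gamma(xa)^2 = \gamma((xa)^*(xa))=\gamma(a^* (x^*x) a)\le \tilde\gamma(a)^2\gamma(x^*x)\le\tilde\gamma(a)^2$, giving $\tilde\gamma(a)^2\le\tilde\gamma(a^*a)$; wait this needs $\tilde\gamma(a^*\cdot a)\le\tilde\gamma(a)\tilde\gamma(\cdot)\tilde\gamma(a)$, which follows once submultiplicativity and the extension property $\tilde\gamma|_I=\gamma$ are in hand. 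The reverse inequality $\tilde\gamma(a^*a)\le\tilde\gamma(a)^2$ comes from submultiplicativity plus $\tilde\gamma(a^*)=\tilde\gamma(a)$. So the logical order must be: (i) $\tilde\gamma$ extends $\gamma$ on $I$ — this is where one uses that $I$ has a "contractive approximate unit'' supplied by $\gamma$ itself, namely $\gamma(x)=\sup\{\gamma(yx):\gamma(y)\le1\}$, which holds because $\gamma$, being a $C^*$-seminorm on the $*$-tring $I$, satisfies $\gamma(x)^3=\gamma(xx^*x)\le\gamma(x)\gamma(x^*x)\le\gamma(x)\gamma(x)^2$ forcing equality, and more precisely the Hausdorff completion $I_\gamma$ is a $C^*$-algebra in which $\|x\|=\sup\{\|yx\|:\|y\|\le1\}$ by Cohen factorization or by the standard identity in a $C^*$-algebra; (ii) submultiplicativity; (iii) $*$-invariance; (iv) the $C^*$-identity.

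For the uniqueness claim when $I$ is dense: if $\beta$ is any $C^*$-seminorm on $A$ restricting to $\gamma$ on $I$, then for $a\in A$ pick $x_n\in I$ with $x_n\to a$ in $A$; since $C^*$-seminorms on a Banach $*$-algebra are automatically norm-continuous, $\beta(a)=\lim\beta(x_n)=\lim\gamma(x_n)$, and the same limit computes $\tilde\gamma(a)$ — or more directly, $\beta$ is determined on $\overline{I}=A$ by its values on $I$ by continuity, so $\beta=\tilde\gamma$. The main obstacle I anticipate is step (i), establishing $\tilde\gamma|_I=\gamma$: one needs to know that multiplication by a net drawn from the unit ball of $(I,\gamma)$ recovers the norm, i.e. that $I_\gamma$ has a contractive approximate identity, which is true for any $C^*$-algebra but must be transported back through the Hausdorff completion; this is exactly the $C^*$-algebra fact that $\|b\|=\sup\{\|cb\|:c\in C^*\text{-alg},\ \|c\|\le1\}$, applied in $I_\gamma$, together with density of the image of $I$ in $I_\gamma$. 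Everything else is a routine repetition of the computations in Proposition~\ref{prop:semi*level}.
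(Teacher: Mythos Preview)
Your construction $\tilde\gamma(a)=\sup\{\gamma(xa):x\in I,\ \gamma(x)\le 1\}$ is exactly the operator seminorm that the paper itself studies in Proposition~\ref{prop:wbijnorms}, and your verification that \emph{if} it is everywhere finite then it is a $C^*$-seminorm extending $\gamma$, together with your uniqueness argument via automatic continuity, are correct and essentially coincide with what the paper does there and in the last line of its proof of Proposition~\ref{prop:extbanach}.

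The genuine gap is the finiteness of $\tilde\gamma(a)$. Automatic continuity of $\gamma$ with respect to the ambient norm (which, incidentally, you cannot invoke as ``a $C^*$-seminorm on a Banach $*$-algebra is bounded by the norm'', since $I$ is not assumed closed; one must argue via spectra in $A^+$ versus $I_\gamma^+$ using that $I$ is an ideal) yields only $\gamma(xa)\le \|x\|_A\,\|a\|_A$. This bounds the supremum over $\|x\|_A\le 1$, not over $\gamma(x)\le 1$, and there is no reverse inequality $\|x\|_A\le C\gamma(x)$ in general. Your second attempt, $\gamma(xa)^2=\gamma(a^*x^*xa)\le\gamma(a^*x^*)\gamma(xa)$, collapses to the tautology $\gamma(xa)\le\gamma(xa)$, and the ``etc.''\ does not lead anywhere: every natural manipulation either reintroduces $\|x\|_A$ or is circular. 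In fact Proposition~\ref{prop:wbijnorms} shows that finiteness of $\tilde\gamma$ is \emph{equivalent} to the existence of an extension, so establishing it directly is exactly as hard as the proposition itself.

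The paper sidesteps this by a different route: it represents $I_\gamma$ faithfully on a Hilbert space, obtaining a $*$-representation $\pi p:I\to B(H)$, and then invokes the nontrivial result \cite[VI.19.11]{fd} that a $*$-representation of a $*$-ideal of a Banach $*$-algebra extends to a $*$-representation $\rho$ of $A$. The extension $a\mapsto\|\rho(a)\|$ is then automatic. This is where the Banach $*$-algebra hypothesis actually does work, and it is not captured by your estimates. If you want to keep your operator-seminorm framework, the honest fix is to first produce \emph{some} extension (e.g.\ via \cite[VI.19.11]{fd}) and then observe, as in Proposition~\ref{prop:wbijnorms}(a), that $\tilde\gamma$ is dominated by it and hence finite.
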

\begin{proof}
Consider $\alpha\in\msn(I)$, $\alpha\neq0 $. Let $I_\alpha$ be the
Hausdorff completion of $(I,\alpha)$, $p:I\to I_\alpha$ the canonical
map, and let $\pi:I_\alpha\to B(H)$ be a faithful representation. Now,
according to \cite[VI-19.11]{fd}, the representation $\pi p:I\to B(H)$
can be extended to a representation $\rho$ of $A$. Then
$a\mapsto\norm{\rho(a)}$ defines a $C^*$-seminorm on $A$ that extends
$\alpha$. Note that the continuity of $\rho$ implies the continuity of
$\alpha$, from which the uniqueness of the extension follows in case
$I$ is dense in $A$. 
 \end{proof}

\begin{corollary}\label{cor:fell}
Let $I$ be a $*$-ideal of the $C^*$-algebra $A$. Then the unique
$C^*$-norm one can define in $I$ is the restriction to $I$ of the norm
of $A$. 
\end{corollary}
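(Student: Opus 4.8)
The statement to prove is Corollary~\ref{cor:fell}: if $I$ is a $*$-ideal (not necessarily closed) of a $C^*$-algebra $A$, then the only $C^*$-norm on $I$ is the restriction of the norm of $A$.

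\textbf{Plan.} The natural strategy is to deduce this directly from Proposition~\ref{prop:extbanach}. First I would dispose of the trivial normalization: a $C^*$-norm on $I$ is in particular a nonzero element of $\msn(I)$ (assuming $I\neq\{0\}$; if $I=\{0\}$ there is nothing to prove), so Proposition~\ref{prop:extbanach} applies with $A$ playing the role of the ambient Banach $*$-algebra — note $A$ is a $C^*$-algebra, hence certainly a Banach $*$-algebra. Let $\alpha$ denote the given $C^*$-norm on $I$. Then $\alpha$ extends to a $C^*$-seminorm $\tilde\alpha$ on $A$. Since $A$ is a $C^*$-algebra, its own norm is the \emph{largest} $C^*$-seminorm on $A$: indeed for any $C^*$-seminorm $\beta$ on $A$ the quotient $A/N_\beta$ is a pre-$C^*$-algebra whose completion receives a $*$-homomorphism from $A$, and $*$-homomorphisms between $C^*$-algebras are norm-decreasing, so $\beta(a)\leq\norm a$ for all $a\in A$. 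Applying this to $\tilde\alpha$ gives $\alpha(x)=\tilde\alpha(x)\leq\norm x$ for all $x\in I$.

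\textbf{The reverse inequality.} For the other direction I would use that $\bar I$, the closure of $I$ in $A$, is a closed two-sided $*$-ideal of $A$, hence a $C^*$-algebra in its own right, and $I$ is a dense $*$-ideal of $\bar I$. So it suffices to prove the claim with $A$ replaced by $\bar I$, i.e. to handle the case where $I$ is dense. Here the uniqueness clause of Proposition~\ref{prop:extbanach} is decisive: the restriction of the norm of $\bar I$ to $I$ is itself a $C^*$-norm on $I$ (restrictions of $C^*$-norms to subrings are $C^*$-norms), and by the proposition there is only one $C^*$-seminorm on $\bar I$ extending a given $C^*$-seminorm on the dense ideal $I$. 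Both $\tilde\alpha$ and $\norm\cdot_{\bar I}$ are $C^*$-seminorms (in fact norms) on $\bar I$ restricting to $C^*$-norms on $I$; if I can show $\tilde\alpha\big|_I=\norm\cdot_{\bar I}\big|_I$, uniqueness forces $\tilde\alpha=\norm\cdot_{\bar I}$ on all of $\bar I$, and then restricting back to $I$ gives $\alpha=\norm\cdot_{\bar I}\big|_I=\norm\cdot_A\big|_I$.

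\textbf{Main obstacle.} The one genuine point requiring care — and the likely intended content of the corollary — is the reverse inequality $\norm x\leq\alpha(x)$ for $x\in I$, equivalently that $\tilde\alpha$ agrees with the ambient norm already on $I$ (so that the uniqueness argument closes). The cleanest route is: the completion $I_\alpha$ of $(I,\alpha)$ is a $C^*$-algebra and the canonical map $p\colon I\to I_\alpha$ is an injective $*$-homomorphism (injective since $\alpha$ is a norm); the extension $\rho$ of a faithful representation of $I_\alpha$ produced in the proof of Proposition~\ref{prop:extbanach} restricts on $I$ to an injective representation, so $\ker\rho$ is a closed ideal of $A$ meeting $I$ trivially, hence meeting the essential ideal $\bar I$ trivially, hence $\rho$ is faithful on $\bar I$; a faithful representation of a $C^*$-algebra is isometric, so $\tilde\alpha(x)=\norm{\rho(x)}=\norm x$ for $x\in\bar I$, in particular for $x\in I$. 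This gives both inequalities at once and makes the corollary immediate. I would present it in this order: reduce to $I$ dense, invoke Proposition~\ref{prop:extbanach} for existence of $\tilde\alpha$, argue $\tilde\alpha$ is faithful on $\bar I$ via essentiality of $\bar I$, conclude $\tilde\alpha=\norm\cdot_{\bar I}$ by isometry of faithful representations, and restrict.
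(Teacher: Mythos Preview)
Your argument is correct and is essentially the natural way to flesh out what the paper leaves implicit (the paper states this corollary without proof, as an immediate consequence of Proposition~\ref{prop:extbanach}). Your route---extend $\alpha$ to $\tilde\alpha$ via the representation $\rho$ built in the proof of Proposition~\ref{prop:extbanach}, show $\rho$ is faithful on $\bar I$, and conclude by isometry of injective $*$-homomorphisms between \css---is exactly the expected one.

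One small point of language: you refer to ``the essential ideal $\bar I$'', but $\bar I$ need not be essential in $A$ (take $A=\C\oplus\C$, $I=\C\oplus 0$). What you actually use is the weaker fact that a closed ideal $J$ of $A$ with $J\cap I=0$ also satisfies $J\cap\bar I=0$: for $a\in J\cap\bar I$ one has $aI\subseteq J\cap I=0$, hence $a\bar I=0$ by density, hence $aa^*=0$, hence $a=0$. Since you have already reduced to $A=\bar I$ this is really the statement that a closed ideal meeting a dense $*$-ideal trivially is zero. Either way the conclusion stands; just avoid the word ``essential'' here.

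You could also shorten the write-up: once you have reduced to $I$ dense, the argument in your ``Main obstacle'' paragraph already gives both inequalities at once (as you yourself note at the end), so the separate ``Plan'' and ``Reverse inequality'' paragraphs---in particular the circular-looking passage about showing $\tilde\alpha|_I=\|\cdot\|_{\bar I}|_I$ in order to invoke uniqueness---can be dropped.
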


\begin{proposition}\label{prop:cbt}
Let $(E,A,\pr{\,}{\,}_A)$ be a full $C^*$-basic triple, and $\gamma$
and $\alpha$ the corresponding norms of $E$ and $A$. Then
$(A,\alpha)$ is the completion of $(E_0^r,\gamma^r)$, and
$\pr{\,}{\,}_A$ is the continuous extension of $\pr{\,}{\,}_r$.   
\end{proposition}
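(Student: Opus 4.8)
The plan is to identify $E_0^r$, which \emph{a priori} lives inside $\mathscr{L}_l(E)^{\text{op}}$, with a dense $*$-ideal of $A$, and then upgrade the algebraic identification to a statement about completions by invoking the uniqueness of $C^*$-norms on dense ideals.

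First I would observe that $(E,A,\pr{\,}{\,}_A)$, being a full $C^*$-basic triple, is in particular a full basic triple; moreover, since $(E,\gamma)$ is a $C^*$-tring it is admissible, and by Proposition~\ref{prop:adfaith}(2) $E$ is faithful as an $A$-module (here one uses that $A$, carrying the $C^*$-norm $\alpha$, is admissible). Hence Corollary~\ref{cor:adfaith} applies and yields a $*$-algebra isomorphism $\psi:E_0^r\to A$ with $\psi(\pr{x}{y}_r)=\pr{x}{y}_A$ for all $x,y\in E$, under which the actions are compatible, i.e.\ $\psi$ intertwines the $E_0^r$-module and $A$-module structures on $E$.

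Next I would compare norms. On $E_0^r$ we have the operator seminorm $\gamma^r$ of Proposition~\ref{prop:semi*level}, which (since $\gamma$ is a norm) is a $C^*$-norm on $E_0^r$. Transporting it via $\psi$ gives a $C^*$-norm $\beta:=\gamma^r\circ\psi^{-1}$ on $A$. On the other hand $\alpha$ restricts to a $C^*$-norm on the $*$-ideal $\psi(E_0^r)=\operatorname{span}\{\pr{x}{y}_A:x,y\in E\}$ of $(A,\alpha)$, which by fullness is dense in $A$. The continuity of $\pr{\,}{\,}_A$ and of the module action, together with $\gamma(xa)\le\gamma(x)\,\gamma^r(a)$ from \eqref{eq:op.sem}, should show that $\alpha$ and $\beta$ are comparable on this dense ideal; in fact I would argue they coincide. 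One clean way: consider a faithful nondegenerate representation of $(A,\alpha)$; restricting to the dense ideal and using \eqref{eq:cs}, \eqref{eq:phi.inj} one recognizes $\gamma$ as the corresponding Hilbert-module norm, forcing $\gamma^r$ to agree with the operator norm coming from $\alpha$. Alternatively, and more in the spirit of the paper, Corollary~\ref{cor:fell} (the uniqueness of a $C^*$-norm on a $*$-ideal of a $C^*$-algebra) gives immediately that the $C^*$-norm $\beta$ on the ideal $\psi(E_0^r)\subseteq A$ must be the restriction of $\alpha$; hence $\gamma^r=\alpha\circ\psi$ on $E_0^r$.

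Finally, since $\psi$ is an isometric $*$-isomorphism of $(E_0^r,\gamma^r)$ onto the dense $*$-subalgebra $(\psi(E_0^r),\alpha)$ of the $C^*$-algebra $(A,\alpha)$, the completion of $(E_0^r,\gamma^r)$ is canonically $(A,\alpha)$. Likewise $\pr{\,}{\,}_r=\psi^{-1}\circ\pr{\,}{\,}_A$ is continuous for $\gamma$ by \eqref{eq:cs}, so $\pr{\,}{\,}_A$, being continuous and agreeing with $\pr{\,}{\,}_r$ on the dense subspace, is exactly the continuous extension of $\pr{\,}{\,}_r$ to $E\times E$. The main obstacle is the norm-comparison step: showing $\gamma^r=\alpha\circ\psi$ rather than merely an inequality. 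I expect Corollary~\ref{cor:fell} to dispose of it cleanly, but one must be slightly careful that the hypotheses are really met—namely that $\beta=\gamma^r\circ\psi^{-1}$ is genuinely a $C^*$-\emph{norm} (not just a seminorm) on the ideal, which follows from $\gamma\in\mn(E)$ via Proposition~\ref{prop:semi*level}.
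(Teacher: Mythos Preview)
Your approach is essentially the paper's own: establish that $E$ is admissible and faithful over $A$, invoke Corollary~\ref{cor:adfaith} to get the injective $*$-homomorphism $\psi:E_0^r\to A$ with dense image, and then apply Corollary~\ref{cor:fell} to identify $\gamma^r$ with $\alpha\circ\psi$. One small imprecision: you cite Proposition~\ref{prop:adfaith}(2) for faithfulness, but that result assumes the triple is \emph{algebraically} full (i.e.\ $\operatorname{span}\pr{E}{E}_A=A$), whereas for a full $C^*$-basic triple the span is only \emph{dense} in $A$; the paper handles this by arguing directly that $xa=0$ for all $x$ forces $\pr{x}{y}_Aa=0$, hence $ba=0$ on a dense ideal, hence on all of $A$ by continuity, whence $a=0$. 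With that adjustment your proof is complete and matches the paper's.
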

\begin{proof}
Note that $E$ is admissible for it is a $C^*$-tring. On the other hand
$E$ is a faithful $A$-module: if $a\in A$ is such that $xa=0\,\forall
x\in E$, then $\pr{x}{y}_Aa=0\, \forall x,y\in E$, so it follows that
$ba=0$ for every $b$ in the dense ideal $\textrm{span}\{\pr{x}{y}_A:\,
x,y\in E\}$ of $A$, which implies $a=0$. 
Thus there exists, by \ref{prop:adfaith}, a unique homomorphism
$\psi:E_0^r\to A$ such that 
$\psi(\pr{x}{y}_r)=\pr{x}{y}_A$, $\forall x,y\in E$. Besides $\psi$ is
injective and $\psi(E_0^r)=\textrm{span}\{\pr{x}{y}_A:\, x,y\in E\}$
(thus we may suppose $E_0^r$ is a dense ideal of $A$). Now
\ref{cor:fell} implies $\gamma_0^r$ is the restriction of
$\alpha$ to $\psi(E_0^r)$ and, since the latter is dense in $A$, we
conclude that $A$ is the completion of $E_0^r$.    
 \end{proof}
\par As a consequence we obtain the following result, due to H.~Zettl:   

\begin{corollary}[\textit{cf.} \mbox{\cite[Proposition 3.2]{z2}}]\label{cor:cbt}
Let $(E,\gamma)$ be a $C^*$-tring and $E^r$ the completion of $E_0^r$
with respect to $\gamma^r$. Then $(E,E^r,\pr{\,}{\,}_r))$ is, up to
isomorphism, the unique full $C^*$- basic triple whose first component
is $E$.   
\end{corollary}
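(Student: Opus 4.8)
The plan is to deduce Corollary~\ref{cor:cbt} from Proposition~\ref{prop:cbt} together with the existence of \emph{some} full $C^*$-basic triple with first component $E$. First I would establish existence: given a $C^*$-tring $(E,\gamma)$, form the Hausdorff completion $E^r$ of $(E_0^r,\gamma^r)$, which is a $C^*$-algebra because $\gamma^r\in\msn(E_0^r)$ is actually a $C^*$-\emph{norm} by Proposition~\ref{prop:semi*level} (using that $\gamma\in\mn(E)$ since $E$ is a $C^*$-tring), and its submultiplicativity and C$^*$-identity pass to the completion. The module action of $E_0^r$ on $E$ extends to an action of $E^r$ on $E$ by continuity, using the estimate $\gamma(xa)\le\gamma(x)\gamma^r(a)$ of \eqref{eq:op.sem}; likewise $\pr{\,}{\,}_r$ extends to a continuous sesquilinear map $\pr{\,}{\,}_{E^r}:E\times E\to E^r$ via \eqref{eq:cs}. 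One then checks that the algebraic identities defining a basic triple ($\pr{x}{y}a=\pr{x}{ya}$ and $\pr{x}{y}^*=\pr{y}{x}$) persist under these extensions, so $(E,E^r,\pr{\,}{\,}_{E^r})$ is a $C^*$-basic triple; it is full by construction since $E_0^r=\gen\{\theta_{x,y}:x,y\in E\}$ is dense in $E^r$; and its associated ternary product is the continuous extension of the one on $E_0^r$, which agrees with $\mu$ on the dense subspace $E$ — hence equals $\mu$. That $E_0^r$ is really dense, i.e. that the canonical map $E_0^r\to E^r$ is injective with dense range, is immediate from the definition of Hausdorff completion.

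Second I would prove uniqueness. Suppose $(E,A,\pr{\,}{\,}_A)$ is any full $C^*$-basic triple with first component $E$ (and the given norm $\gamma$ on $E$). By Proposition~\ref{prop:cbt}, $(A,\alpha)$ is the completion of $(E_0^r,\gamma^r)$ and $\pr{\,}{\,}_A$ is the continuous extension of $\pr{\,}{\,}_r$. But the completion of a normed algebra is unique up to isometric isomorphism, and this isomorphism is canonical because it is determined on the dense subalgebra $E_0^r$ by the identity. So there is a unique isometric $*$-isomorphism $\Psi:E^r\to A$ extending the identity on $E_0^r$; it intertwines the module actions (both extend the action of $E_0^r$ on $E$, and the action is jointly continuous) and satisfies $\Psi(\pr{x}{y}_{E^r})=\pr{x}{y}_A$ by density. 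Thus $(\mathrm{id}_E,\Psi)$ is an isomorphism of $C^*$-basic triples, and it is the unique one since any such isomorphism must restrict to the identity on $E_0^r$ (it sends $\pr{x}{y}_{E^r}=\theta_{x,y}$ to $\pr{x}{y}_A$, and $E_0^r$ is spanned by these).

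The main obstacle is not deep but bookkeeping-heavy: one must verify that all the structure maps (the $A$-action and the $A$-valued inner product), which a priori are only defined and continuous at the level of $E_0^r$ acting on $E$, extend to the completion $E^r$ and that the extensions still satisfy the basic-triple axioms — in particular that the extended action of $E^r$ on $E$ is genuinely a module action (associativity $x(ab)=(xa)b$ for $a,b\in E^r$), which follows from density of $E_0^r$ and separate continuity, and that $E$ remains a \emph{Banach} $E^r$-module with $\pr{\,}{\,}_{E^r}$ continuous, which is exactly \eqref{eq:op.sem} and \eqref{eq:cs} passed to the limit. All of these are routine $\varepsilon$-arguments once the estimates of Proposition~\ref{prop:semi*level} are in hand, so the real content of the corollary is already contained in Propositions~\ref{prop:semi*level} and~\ref{prop:cbt}; the proof is essentially an assembly of those two results plus the uniqueness of completions.
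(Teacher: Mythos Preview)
Your proposal is correct and follows exactly the route the paper intends: the corollary is stated without proof because uniqueness is precisely Proposition~\ref{prop:cbt}, while existence amounts to completing $(E_0^r,\gamma^r)$ and extending the module action by the estimates of Proposition~\ref{prop:semi*level}, just as you outline. One minor simplification: $\pr{\,}{\,}_r$ already takes values in $E_0^r\subseteq E^r$, so no extension of the inner product is needed---you only need to extend the \emph{action} of $E_0^r$ on $E$ to an action of $E^r$.
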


\begin{proposition}\label{prop:cbt2}
Let $\pi:E_1\to E_2$ be a homomorphism of $*$-trings between the
$C^*$-trings $E_1$ and $E_2$. Then there exists a
unique \hm $\pi^r:E^r\to F^r$ such that
$\pi^r(\pr{x}{y}_E)=\pr{\pi(x)}{\pi(y)}_F$, $\forall x,y\in E$, and
$\pi(xa)=\pi(x)\pi^r(a)$ $\forall x\in E$, $a\in E^r$. Consequently
$\pi$ is always contractive, and is isometric if and only if it is
injective. 
\end{proposition}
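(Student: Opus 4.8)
The plan is to lift the purely algebraic Theorem~\ref{thm:*level}(2) to the $C^*$-level by showing that the induced map on the $E_0^r$'s is automatically contractive, and then to pass to completions. Write $\gamma$ and $\delta$ for the $C^*$-norms of $E_1$ and $E_2$, and $\gamma^r$, $\delta^r$ for the operator norms on $(E_1)_0^r$, $(E_2)_0^r$ from Proposition~\ref{prop:semi*level}; recall that $C^*$-trings are admissible and that, by Corollary~\ref{cor:cbt}, $E^r$ and $F^r$ are the $C^*$-algebras obtained by completing $(E_1)_0^r$ and $(E_2)_0^r$ in these norms.

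First I would isolate the key fact: for a $C^*$-tring $E$, every $C^*$-seminorm on $E_0^r$ is dominated by $\gamma^r$, and hence, by the identity $\gamma(x)^2=\gamma^r(\pr{x}{x}_r)$ of \eqref{eq:phi.inj}, every $C^*$-seminorm on $E$ is dominated by $\gamma$. The point is that $E_0^r$ is a dense two-sided $*$-ideal of $E^r$: in the (full) $C^*$-basic triple $(E,E^r,\pr{\,}{\,}_r)$ of Corollary~\ref{cor:cbt} one has $\pr{x}{y}_r\,b=\pr{x}{yb}_r\in E_0^r$ for every $b\in E^r$ (so $E_0^r$ is a right ideal), while $E_0^r=\textrm{span}\{\pr{x}{y}_r\}$ is $*$-closed since $\pr{x}{y}_r^*=\pr{y}{x}_r$, whence $E_0^r$ is a two-sided $*$-ideal; density is clear. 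Now, given a $C^*$-seminorm $q$ on $E_0^r$, Proposition~\ref{prop:extbanach} extends it to a $C^*$-seminorm on the $C^*$-algebra $E^r$, which is necessarily dominated by the norm $\gamma^r$ of $E^r$; restricting, $q\le\gamma^r$.

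With this in hand: since $E_1$ and $E_2$ are admissible, Theorem~\ref{thm:*level}(2) provides a $*$-homomorphism $\pi_0^r:(E_1)_0^r\to(E_2)_0^r$ with $\pi_0^r(\pr{x}{y}_r)=\pr{\pi(x)}{\pi(y)}_r$ and $\pi(xb)=\pi(x)\pi_0^r(b)$ for all $x\in E_1$, $b\in(E_1)_0^r$. Composing $\pi_0^r$ with the inclusion $(E_2)_0^r\subseteq F^r$ gives a $*$-homomorphism into the $C^*$-algebra $F^r$, so $a\mapsto\delta^r(\pi_0^r(a))$ is a $C^*$-seminorm on $(E_1)_0^r$, hence $\le\gamma^r$ by the key fact; thus $\pi_0^r$ is contractive and extends uniquely to a contractive $*$-homomorphism $\pi^r:E^r\to F^r$ with $\pi^r(\pr{x}{y}_r)=\pr{\pi(x)}{\pi(y)}_r$ (uniqueness among homomorphisms with this property being clear, since $\{\pr{x}{y}_r\}$ spans the dense subalgebra $(E_1)_0^r$ and $C^*$-algebra homomorphisms are continuous). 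Likewise $x\mapsto\delta(\pi(x))$ is a $C^*$-seminorm on $E_1$, whence $\delta(\pi(x))\le\gamma(x)$, i.e.\ $\pi$ is contractive; and the identity $\pi(xa)=\pi(x)\pi^r(a)$ for $a\in E^r$ follows from the case $a\in(E_1)_0^r$ by continuity of $\pi$, of $\pi^r$, and of the module action, using \eqref{eq:op.sem}.

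It remains to relate isometry and injectivity. If $\pi$ is injective, then $\pi_0^r$ is injective by Proposition~\ref{prop:*(pi)0}(1), so $a\mapsto\delta^r(\pi_0^r(a))$ is a $C^*$-\emph{norm} on the $*$-ideal $(E_1)_0^r$ of the $C^*$-algebra $E^r$; by Corollary~\ref{cor:fell} it must coincide with $\gamma^r$, so $\pi_0^r$ is isometric and therefore so is its extension $\pi^r$. Then \eqref{eq:phi.inj} gives $\delta(\pi(x))^2=\delta^r(\pi^r(\pr{x}{x}_r))=\gamma^r(\pr{x}{x}_r)=\gamma(x)^2$, so $\pi$ is isometric; the converse is trivial. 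I expect the main obstacle to be the key fact of the second paragraph — in particular, the verification that $E_0^r$ is genuinely a two-sided $*$-ideal of its own completion $E^r$, which is exactly what makes Proposition~\ref{prop:extbanach} applicable; granted that, the remainder is routine bookkeeping with the machinery already in place.
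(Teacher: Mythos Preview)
Your argument is correct and rests on the same core facts as the paper's proof: $E_0^r$ is a dense $*$-ideal of the $C^*$-algebra $E^r$, and the extension of $C^*$-seminorms/representations from ideals (Proposition~\ref{prop:extbanach}, i.e.\ \cite[VI.19.11]{fd}) then controls everything. The packaging differs slightly: the paper picks a faithful representation $\rho:F^r\to B(H)$, extends $\rho\pi_0^r$ from the ideal $E_0^r$ to a representation $\bar\rho$ of $E^r$, checks that $\bar\rho(E^r)\subseteq\rho(F^r)$ by a closure argument, and sets $\pi^r:=\rho^{-1}\bar\rho$; contractivity of $\pi$ is then read off from $\pi^r$. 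You instead first prove the ``maximality'' fact that every $C^*$-seminorm on $E_0^r$ (and hence on $E$) is dominated by the given norm, and deduce contractivity of $\pi_0^r$ and of $\pi$ directly, extending $\pi_0^r$ to $\pi^r$ by density. This is a bit cleaner in that it sidesteps the image-in-$\rho(F^r)$ check, and it makes the universality of the $C^*$-tring norm explicit; the paper's route is more hands-on but equivalent. For the isometry part both proofs are identical: injectivity of $\pi$ gives injectivity of $\pi_0^r$ by Proposition~\ref{prop:*(pi)0}, and then Corollary~\ref{cor:fell} forces $\delta^r\circ\pi_0^r=\gamma^r$.
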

\begin{proof}
It is clear that, if the \hm $\pi^r$ exists, it must be an extension
of $\pi_0^r:E_0^r\to F_0^r$. Let $\rho:F^r\to B(H)$ be a faithful
representation. Then $\rho\pi_0^r$ is a representation of
$E_0^r$. Now, since $(E,E^r,\pr{\,}{\,}_r)$ is a $C^*$-triple, $E_0^r$
is a $*$-ideal in $E^r$. Therefore $\rho\pi_0^r$ can be uniquely
extended to a representation $\bar{\rho}:E^r\to B(H)$
(\cite[VI.19.11]{fd}). Since $\rho(F^r)$ is closed and
$\bar{\rho}(E^r)$ is a subset of the closure of $\rho\pi_0^r(E_0^r)$,
we have $\bar{\rho}(E^r)\subseteq\rho(F^r)$. Then take 
$\pi^r:=\rho^{-1}\bar{\rho}$. Note that
$\norm{\pi(x)}^2=\norm{\pi^r(\pr{x}{x})}\leq
\norm{\pr{x}{x}}=\norm{x}^2$, with equality if $\pi^r$ is
injective. This shows that $\pi$ is contractive. Finally, if $\pi$ is
injective, so is $\pi_0^r$ and, as in the proof of \ref{cor:fell},
this implies that $\pi^r$ also is injective, thus an isometry.        
 \end{proof}

\begin{corollary}[\textit{cf.} \mbox{\cite{env}[Proposition 4.1]}]\label{cor:functor2}
The assignment 
\begin{equation*}
  (E\stackrel{\pi}{\to}F)\longmapsto
    (E,E^r,\pr{\,}{\,}_r)\stackrel{(\pi,\pi^r)}{\longmapsto}(F,F^r,\pr{\,}{\,}_r)
\end{equation*}
defines a functor from the category of $C^*$-trings to the
category of full $C^*$-basic triples. 
\end{corollary}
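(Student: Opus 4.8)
The plan is to assemble the functor out of Corollary~\ref{cor:cbt} and Proposition~\ref{prop:cbt2}: the former makes the object assignment well defined, the latter makes the morphism assignment well defined, and the two functor axioms then fall out of the uniqueness clause of Proposition~\ref{prop:cbt2}.

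First I would record that the object map makes sense. Given a \ct $(E,\gamma)$, Proposition~\ref{prop:semi*level} shows that $\gamma^r$ is a $C^*$-norm on $E_0^r$, so its Hausdorff completion $E^r$ is a \cs, and by Corollary~\ref{cor:cbt} the triple $(E,E^r,\pr{\,}{\,}_r)$ is a full $C^*$-basic triple with first component $E$. For the morphism map, let $\pi\colon E\to F$ be a \hm of $C^*$-trings. Proposition~\ref{prop:cbt2} gives a \emph{unique} \hm of $*$-algebras $\pi^r\colon E^r\to F^r$ with $\pi^r(\pr{x}{y}_r)=\pr{\pi(x)}{\pi(y)}_r$ for all $x,y\in E$ and $\pi(xa)=\pi(x)\pi^r(a)$ for all $x\in E$, $a\in E^r$. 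The second relation says precisely that $(\pi,\pi^r)$ is a homomorphism of basic triples in the sense of Definition~\ref{defn:bt}; moreover $\pi$ is contractive (again Proposition~\ref{prop:cbt2}) and $\pi^r$, being a $*$-homomorphism between \css, is automatically continuous, so $(\pi,\pi^r)$ is a morphism in the category of full $C^*$-basic triples.

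It remains to check functoriality, and for this the uniqueness in Proposition~\ref{prop:cbt2} does all the work. For identities, the pair consisting of $\mathrm{id}_E$ and $\mathrm{id}_{E^r}$ satisfies $\mathrm{id}_{E^r}(\pr{x}{y}_r)=\pr{x}{y}_r=\pr{\mathrm{id}_E(x)}{\mathrm{id}_E(y)}_r$ and $\mathrm{id}_E(xa)=\mathrm{id}_E(x)\,\mathrm{id}_{E^r}(a)$, so uniqueness forces $(\mathrm{id}_E)^r=\mathrm{id}_{E^r}$; hence $\mathrm{id}_E$ is sent to the identity morphism of $(E,E^r,\pr{\,}{\,}_r)$. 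For composition, given \hms $\pi_1\colon E_1\to E_2$ and $\pi_2\colon E_2\to E_3$ of $C^*$-trings, the $*$-homomorphism $\pi_2^r\circ\pi_1^r\colon E_1^r\to E_3^r$ satisfies $(\pi_2^r\pi_1^r)(\pr{x}{y}_r)=\pi_2^r(\pr{\pi_1(x)}{\pi_1(y)}_r)=\pr{(\pi_2\pi_1)(x)}{(\pi_2\pi_1)(y)}_r$ as well as $(\pi_2\pi_1)(xa)=\pi_2\bigl(\pi_1(x)\pi_1^r(a)\bigr)=(\pi_2\pi_1)(x)\,(\pi_2^r\pi_1^r)(a)$; applying the uniqueness part of Proposition~\ref{prop:cbt2} to the homomorphism $\pi_2\circ\pi_1$ then gives $(\pi_2\circ\pi_1)^r=\pi_2^r\circ\pi_1^r$. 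Since composition of morphisms of basic triples is componentwise and is the obvious thing on first components, this yields $(\pi_2\circ\pi_1,(\pi_2\circ\pi_1)^r)=(\pi_2,\pi_2^r)\circ(\pi_1,\pi_1^r)$, which is exactly functoriality. I do not expect any genuine obstacle: all the substance sits in Proposition~\ref{prop:cbt2} and Corollary~\ref{cor:cbt}, and the only point requiring a moment's care is to note that morphisms of $C^*$-basic triples compose componentwise, so that both functor axioms really do reduce to the componentwise identities above.
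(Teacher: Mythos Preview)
Your proposal is correct and is precisely the argument the paper intends: the corollary is stated without proof immediately after Corollary~\ref{cor:cbt} and Proposition~\ref{prop:cbt2}, and your verification of the object and morphism assignments together with the functor axioms via the uniqueness clause of Proposition~\ref{prop:cbt2} is exactly how the result is meant to be extracted.
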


\par It follows from Proposition~\ref{prop:semi*level} that any
$C^*$-seminorm on $E_0^r$ induced by a $C^*$-seminorm on $E$ by means of
\eqref{eq:defphi} must satisfy the Cauchy-Schwarz
condition~\eqref{eq:cs}. So it is natural to restrict our attention to
the following subsets of $C^*$-seminorms on $E_0^r$: 
\begin{equation*}
  \msn_{cs}(E_0^r)
    :=\{\alpha\in\msn(E_0^r):\alpha(\pr{x}{y}_r)^2\leq\alpha(\pr{x}{x}_r)\,\alpha(\pr{y}{y}_r)\}
\end{equation*} 
\begin{equation*}
  \mn_{cs}(E_0^r):=\msn_{cs}(E_0^r)\cap \mn(E_0^r).
\end{equation*}

\par In fact it will be convenient to place ourselves in a slightly
more general setting: 

\begin{definition}
Let $(E,A,\pr{\,}{\,})$ be a basic triple.
We define
\begin{equation*}
  \msn_{cs}^{\pr{}{}}(A)
    :=\{\alpha\in\msn(A):\ \alpha(\pr{x}{y})^2\leq\alpha(\pr{x}{x})\alpha(\pr{y}{y}),\, 
\forall x,y\in E\}.
\end{equation*}
\end{definition}

\begin{proposition}\label{prop:semi*level2a}
 Let $(E,A,\pr{\,}{\,})$ be a basic triple, and consider $E$ with
 the $*$-tring structure induced by $\pr{\,}{\,}$. Given $\alpha\in
 \msn_{cs}^{\pr{}{}}(A)$, let $\tilde{\alpha}:E\to 
 [0,\infty)$ be defined by:
\begin{equation}\label{eq:defpsia}
\tilde{\alpha}(x):=\alpha(\pr{x}{x})^{1/2}
\end{equation}
Then 
\be
 \item $\tilde{\alpha}(xa)\leq\tilde{\alpha}(x)\alpha(a)$.
 \item $\tilde{\alpha}\in\msn(E)$
 \item If $E$ is a faithful $A$-module and $\tilde{\alpha}\in\mn(E)$,
   then $\alpha\in\mn_{cs}^{\pr{}{}}(A)$.
 \item If $\alpha\in\mn_{cs}^{\pr{}{}}(A)$ and $\pr{x}{x}=0$ implies
   $x=0$, then $\tilde{\alpha}\in\mn(E)$ 
\ee 
\end{proposition}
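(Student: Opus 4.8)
The plan is to verify the four assertions in turn, each a direct computation from the Cauchy--Schwarz hypothesis on $\alpha$ together with the defining identities of a basic triple. Throughout I write $(x,y,z)=x\pr{y}{z}$ for the induced ternary product.

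For item~(1): by definition $\tilde\alpha(xa)^2=\alpha(\pr{xa}{xa})=\alpha(a^*\pr{x}{x}a)\le \alpha(a)^2\alpha(\pr{x}{x})=\alpha(a)^2\tilde\alpha(x)^2$, using that $\alpha$ is a $C^*$-seminorm on $A$ and $\pr{xa}{xa}=\pr{x}{xa}^*=(\pr{x}{x}a)^* a=a^*\pr{x}{x}a$ (this uses $\pr{x}{ya}=\pr{x}{y}a$ and $\pr{x}{y}^*=\pr{y}{x}$ from Example~\ref{exmp:0}). Taking square roots gives the claim. For item~(2) I must check the two axioms of Definition~\ref{defn:cnorm}. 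The subhomogeneity and triangle inequality of $\tilde\alpha$ follow from the fact that $x\mapsto\pr{x}{x}$ is a positive-type map: more directly, $\tilde\alpha(x+y)^2=\alpha(\pr{x}{x}+\pr{x}{y}+\pr{y}{x}+\pr{y}{y})\le \alpha(\pr{x}{x})+2\alpha(\pr{x}{y})+\alpha(\pr{y}{y})\le \tilde\alpha(x)^2+2\tilde\alpha(x)\tilde\alpha(y)+\tilde\alpha(y)^2=(\tilde\alpha(x)+\tilde\alpha(y))^2$, where the middle inequality uses $\alpha\in\msn_{cs}^{\pr{}{}}(A)$. For the contractivity axiom, $\tilde\alpha\big((x,y,z)\big)=\tilde\alpha\big(x\pr{y}{z}\big)\le \tilde\alpha(x)\,\alpha(\pr{y}{z})\le \tilde\alpha(x)\,\alpha(\pr{y}{y})^{1/2}\alpha(\pr{z}{z})^{1/2}=\tilde\alpha(x)\tilde\alpha(y)\tilde\alpha(z)$ by item~(1) and Cauchy--Schwarz. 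For the cube identity, compute $\tilde\alpha\big((x,x,x)\big)^2=\alpha\big(\pr{x\pr{x}{x}}{x\pr{x}{x}}\big)=\alpha\big(\pr{x}{x}^*\pr{x}{x}\pr{x}{x}\big)=\alpha\big(\pr{x}{x}\big)^3=\tilde\alpha(x)^6$, using that $\alpha$ satisfies the $C^*$-identity so $\alpha(b^*bb)=\alpha(b)^3$ for any self-adjoint (indeed any) $b$, applied to $b=\pr{x}{x}$; hence $\tilde\alpha\big((x,x,x)\big)=\tilde\alpha(x)^3$.

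For item~(3): suppose $E$ is faithful and $\tilde\alpha\in\mn(E)$, i.e.\ $\tilde\alpha$ is a norm. I must show $\alpha$ is a norm on $A$. Let $a\in A$ with $\alpha(a)=0$; then for every $x\in E$, $\tilde\alpha(xa)\le \tilde\alpha(x)\alpha(a)=0$, so $xa=0$ since $\tilde\alpha$ is a norm, and faithfulness of the $A$-module $E$ forces $a=0$. For item~(4): if $\alpha\in\mn_{cs}^{\pr{}{}}(A)$ and $\pr{x}{x}=0\Rightarrow x=0$, then $\tilde\alpha(x)=0$ means $\alpha(\pr{x}{x})=0$, hence $\pr{x}{x}=0$ (as $\alpha$ is a norm), hence $x=0$; so $\tilde\alpha$ is a norm.

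I do not anticipate a serious obstacle: everything reduces to algebraic manipulation with the basic-triple identities plus the $C^*$-identity and Cauchy--Schwarz inequality for $\alpha$. The one point demanding slight care is the contractivity axiom $\tilde\alpha((x,y,z))\le\tilde\alpha(x)\tilde\alpha(y)\tilde\alpha(z)$, where one must chain item~(1) with the Cauchy--Schwarz hypothesis in the right order; and the verification of the triangle inequality for $\tilde\alpha$, which is where the hypothesis $\alpha\in\msn_{cs}^{\pr{}{}}(A)$ (rather than an arbitrary $C^*$-seminorm) is genuinely used. All remaining steps are routine.
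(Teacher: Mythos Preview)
Your proof is correct and follows essentially the same route as the paper: the triangle inequality via Cauchy--Schwarz, item~(1) from $\pr{xa}{xa}=a^*\pr{x}{x}a$, contractivity by chaining (1) with Cauchy--Schwarz, the cube identity from $\alpha(\pr{x}{x}^3)=\alpha(\pr{x}{x})^3$, and (3)--(4) exactly as you wrote. One harmless slip: the parenthetical ``(indeed any)'' in your cube-identity step is false (take $b$ nilpotent, e.g.\ $b=E_{12}$ in $M_2(\C)$), but since $\pr{x}{x}$ is self-adjoint this does not affect the argument.
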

\begin{proof}
Since the Cauchy-Schwarz inequality \eqref{eq:cs} holds for $\alpha$,
it follows as usual that $\tilde{\alpha}$ satisfies the 
triangular inequality and, since homogeneity is obvious,
$\tilde{\alpha}$ is a seminorm on $E$. On the other hand, since $\alpha$ is a
$C^*$-seminorm and satisfies \eqref{eq:cs} we have, for all $x,y,z\in
E$,  $a\in A$:
\begin{gather*}
\tilde{\alpha}(xa)
=\alpha(a^*\pr{x}{x}a)^{1/2}\leq\alpha(a)\tilde{\alpha}(x)\\ 
\tilde{\alpha}((x,y,z))=
\tilde{\alpha}(x\pr{y}{z})
\leq\tilde{\alpha}(x)\alpha(\pr{y}{z})
\leq \tilde{\alpha}(x)\tilde{\alpha}(y)\tilde{\alpha}(z)\\
\tilde{\alpha}((x,x,x))=
\alpha(\pr{x}{x}^3)^{1/2}=
\alpha(\pr{x}{x})^{3/2}=
\tilde{\alpha}(x)^3,
\end{gather*}
so $\tilde{\alpha}$ is a $C^*$-seminorm on $E$. The first of the above
inequalities implies that $\alpha$ is a norm whenever $\tilde{\alpha}$
so is and $E$ is a faithful $A$-module. Finally, if $\alpha$
is a norm, it follows directly from \eqref{eq:defpsia} that
$\tilde{\alpha}$ also is a norm when the condition $\pr{x}{x}=0$
implies $x=0$. 
 \end{proof}

\begin{corollary}\label{cor:fa}
If $E$ is an admissible $*$-tring and $\gamma\in\msn(E)$,
$\alpha\in\msn_{cs}(E_0^r)$, then $\widetilde{\gamma^r}=\gamma$ and
$\tilde{\alpha}^r\leq\alpha$. 
\end{corollary}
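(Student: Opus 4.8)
The plan is to obtain both statements by simply assembling the identities already proved in Propositions~\ref{prop:semi*level} and~\ref{prop:semi*level2a}, applied to the full admissible basic triple $(E,E_0^r,\pr{\,}{\,}_r)$ furnished by Theorem~\ref{thm:*level}; essentially no new argument is needed beyond bookkeeping.

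For the equality $\widetilde{\gamma^r}=\gamma$, I would first record that $\gamma^r$ belongs to $\msn_{cs}(E_0^r)$, so that the construction~\eqref{eq:defpsia} legitimately applies to it: combining~\eqref{eq:cs} and~\eqref{eq:phi.inj} of Proposition~\ref{prop:semi*level} gives $\gamma^r(\pr{x}{y}_r)^2\le\gamma(x)^2\gamma(y)^2=\gamma^r(\pr{x}{x}_r)\,\gamma^r(\pr{y}{y}_r)$. Then, directly from the definition of $\widetilde{\gamma^r}$ in~\eqref{eq:defpsia} together with the identity~\eqref{eq:phi.inj},
\[
\widetilde{\gamma^r}(x)=\gamma^r(\pr{x}{x}_r)^{1/2}=\bigl(\gamma(x)^2\bigr)^{1/2}=\gamma(x)
\]
for every $x\in E$, which is the first assertion.

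For the inequality $\tilde{\alpha}^r\le\alpha$, I would invoke Proposition~\ref{prop:semi*level2a} applied to $(E,E_0^r,\pr{\,}{\,}_r)$: item~2 guarantees that $\tilde{\alpha}\in\msn(E)$, so that the operator seminorm $\tilde{\alpha}^r$ on $E_0^r$ is defined via~\eqref{eq:defphi}; and item~1 gives $\tilde{\alpha}(xa)\le\tilde{\alpha}(x)\,\alpha(a)$ for all $x\in E$ and $a\in E_0^r$. Taking the supremum over all $x$ with $\tilde{\alpha}(x)\le 1$ then yields $\tilde{\alpha}^r(a)\le\alpha(a)$ for every $a\in E_0^r$, which is the second assertion. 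The only point that needs any attention — and the nearest thing to an obstacle — is checking at each stage that the seminorm produced by one operation is of the right type (Cauchy--Schwarz on $E_0^r$, respectively a $C^*$-seminorm on $E$) for the next operation to make sense; this is precisely what the two cited propositions supply, so the corollary follows.
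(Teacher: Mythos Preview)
Your proof is correct and follows essentially the same route as the paper: the first identity is obtained directly from \eqref{eq:phi.inj} and \eqref{eq:defpsia}, and the second from item~1 of Proposition~\ref{prop:semi*level2a} together with the definition of the operator seminorm. Your additional care in verifying that $\gamma^r\in\msn_{cs}(E_0^r)$ and $\tilde{\alpha}\in\msn(E)$ before applying the respective constructions is a welcome bit of rigor that the paper leaves implicit.
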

\begin{proof}
The first statement follows immediately from \eqref{eq:phi.inj} and
\eqref{eq:defpsia}. As for the second one we have
$\tilde{\alpha}^r(a)=\sup\{\tilde{\alpha}(xa):\tilde{\alpha}(x)\leq
1\}\leq \alpha(a)$ by 1. of \ref{prop:semi*level2a}.  
 \end{proof}
\begin{corollary}\label{cor:quotient0}
 Let $(E,A,\pr{\,}{\,})$ be a full basic triple, and $\alpha\in
 \msn_{cs}^{\pr{}{}}(A)$. If $\tilde{\alpha}\in\msn(E)$ is given by
 \eqref{eq:defpsia}, then 
 $I_{N_{\tilde{\alpha}}}=N_\alpha$, where
 $I_{N_{\tilde{\alpha}}}:=\{a\in A:Ea\subseteq N_{\tilde{\alpha}}\}$.      
\end{corollary}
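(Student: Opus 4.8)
The plan is to prove the two inclusions $N_\alpha\subseteq I_{N_{\tilde\alpha}}$ and $I_{N_{\tilde\alpha}}\subseteq N_\alpha$ separately; of these, only the second will use that the triple is full.

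For $N_\alpha\subseteq I_{N_{\tilde\alpha}}$ I would simply take $a\in A$ with $\alpha(a)=0$ and apply part~1 of Proposition~\ref{prop:semi*level2a}: for every $x\in E$,
\begin{equation*}
\tilde\alpha(xa)\leq\tilde\alpha(x)\,\alpha(a)=0 ,
\end{equation*}
so $xa\in N_{\tilde\alpha}$. Thus $Ea\subseteq N_{\tilde\alpha}$, that is, $a\in I_{N_{\tilde\alpha}}$.

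For the reverse inclusion, suppose $a\in I_{N_{\tilde\alpha}}$, i.e.\ $\tilde\alpha(xa)=0$ for all $x\in E$. Since the triple is full, write $a=\sum_{j=1}^{n}\pr{x_j}{y_j}$ with $x_j,y_j\in E$. Using $\pr{x}{y}^*=\pr{y}{x}$, $\pr{x}{y}b=\pr{x}{yb}$, and that $E$ is a right $A$-module (so each $x_ja$ again lies in $E$), one gets
\begin{equation*}
a^*a=\Big(\sum_{j}\pr{y_j}{x_j}\Big)a=\sum_{j}\pr{y_j}{x_ja} .
\end{equation*}
Now, applying subadditivity of the seminorm $\alpha$ and then the Cauchy--Schwarz inequality defining $\msn_{cs}^{\pr{}{}}(A)$ to the pair $y_j,\,x_ja\in E$,
\begin{equation*}
\alpha(a^*a)\leq\sum_{j}\alpha(\pr{y_j}{x_ja})\leq\sum_{j}\alpha(\pr{y_j}{y_j})^{1/2}\,\alpha(\pr{x_ja}{x_ja})^{1/2}=\sum_{j}\tilde\alpha(y_j)\,\tilde\alpha(x_ja)=0 ,
\end{equation*}
since $\tilde\alpha(x_ja)=0$ for every $j$. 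As $\alpha$ is a $C^*$-seminorm, $\alpha(a)^2=\alpha(a^*a)=0$, hence $a\in N_\alpha$.

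I do not anticipate any genuine difficulty. The only mild points of care are that the bracket $\pr{y_j}{x_ja}$ makes sense precisely because $x_ja\in E$, and that Cauchy--Schwarz must be invoked for the pair $y_j,\,x_ja$ rather than for $x_j,\,y_j$. The real content of the argument --- and the reason fullness is indispensable --- is the rewriting $a^*a=\sum_j\pr{y_j}{x_ja}$, which converts the hypothesis ``$\tilde\alpha$ annihilates $Ea$'' into ``$\alpha$ annihilates $a^*a$''.
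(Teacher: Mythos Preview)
Your proof is correct and follows essentially the same strategy as the paper's: the easy inclusion via $\tilde\alpha(xa)\leq\tilde\alpha(x)\alpha(a)$, and the hard one by combining fullness with the Cauchy--Schwarz hypothesis. The only cosmetic difference is that you expand $a=\sum_j\pr{x_j}{y_j}$ and estimate $\alpha(a^*a)$ directly, whereas the paper expands $aa^*=\sum_j\pr{x_j}{y_j}$, first observes $\alpha(a^*\pr{x}{y}a)=\alpha(\pr{xa}{ya})\leq\tilde\alpha(xa)\tilde\alpha(ya)=0$ for all $x,y$, and then estimates $\alpha(a)^4=\alpha(a^*aa^*a)$; the ingredients and logic are the same.
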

\begin{proof}
The inclusion $N_\alpha\subseteq I_{N_{\tilde{\alpha}}}$ is clear
because $\tilde{\alpha}(xa)\leq\tilde{\alpha}(x)\alpha(a)$, $\forall
x\in E$, $a\in A$. Conversely, suppose that $a\in A$ is such that
$\tilde{\alpha}(xa)=0$, $\forall x\in E$. Then
$\alpha(a^*\pr{x}{y}a)=\alpha(\pr{xa}{ya})\leq
\tilde{\alpha}(xa)\tilde{\alpha}(ya)=0$, $\forall x,y\in E$. Now,
since the triple is 
full, we can write $aa^*=\sum_j\pr{x_j}{y_j}$, for certain $x_j,y_j\in
E$, so we have:
\begin{equation*}
  0\leq \alpha(a)^4
    =\alpha(a^*a)^2=\alpha(a^*aa^*a)
    =\alpha(a^*\sum_j\pr{x_j}{y_j}a)
    \leq\sum_j\alpha(a^*\pr{x_j}{y_j}a)
    =0, 
\end{equation*}    
hence $a\in N_\alpha$. 
 \end{proof}

\begin{proposition}\label{prop:quotzettl}
 Let $(E,A,\pr{\,}{\,})$ be a full basic triple, and $\alpha\in
 \msn_{cs}^{\pr{}{}}(A)$. Let $\gamma:=\tilde{\alpha}\in\msn(E)$,
 $\tilde{\alpha}$  given by \eqref{eq:defpsia}. Then $E_\gamma$ is a
 $C^*$-tring,  
 $(E_{\gamma}^r,\bar{\gamma}^r)=(A_\alpha,\bar{\alpha})$ and
 $\tilde{\alpha}^r=\alpha$.  
\end{proposition}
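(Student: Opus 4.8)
The plan is to realize $(E_\gamma,A_\alpha,\pr{\,}{\,})$ as a full $C^*$-basic triple and then read off all three claims from Proposition~\ref{prop:cbt} and Corollary~\ref{cor:cbt}. By Proposition~\ref{prop:semi*level2a}(2) we have $\gamma=\tilde{\alpha}\in\msn(E)$, so $E/N_\gamma$ is a pre-$C^*$-tring and its Hausdorff completion $E_\gamma$ is a \ct, which is the first assertion; it remains to identify $E_\gamma^r$ with $A_\alpha$ and to compute $\tilde{\alpha}^r$.

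To build the $C^*$-basic triple, I would first check that the $A$-action on $E$ and the pairing $\pr{\,}{\,}$ descend to $E/N_\gamma$ over $A/N_\alpha$. Indeed, from Proposition~\ref{prop:semi*level2a}(1) we have $\tilde{\alpha}(xa)\le\tilde{\alpha}(x)\alpha(a)$, and since $\alpha$ satisfies the Cauchy--Schwarz condition, $\alpha(\pr{x}{y})^2\le\alpha(\pr{x}{x})\alpha(\pr{y}{y})=\tilde{\alpha}(x)^2\tilde{\alpha}(y)^2$. Hence $N_\gamma A\subseteq N_\gamma$, $EN_\alpha\subseteq N_\gamma$, $\pr{N_\gamma}{E}+\pr{E}{N_\gamma}\subseteq N_\alpha$, and both maps are bounded for the quotient seminorms; they therefore extend by continuity to a right Banach $A_\alpha$-module structure on $E_\gamma$ and a continuous map $\pr{\,}{\,}\colon E_\gamma\times E_\gamma\to A_\alpha$, the identities $\pr{x}{y}a=\pr{x}{ya}$ and $\pr{x}{y}^*=\pr{y}{x}$ surviving by continuity. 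Fullness is immediate, since $\gen\{\pr{x}{y}:x,y\in E\}=A$ is dense in $A_\alpha$. Thus $(E_\gamma,A_\alpha,\pr{\,}{\,})$ is a full $C^*$-basic triple.

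Now Proposition~\ref{prop:cbt} applies to this triple: $(A_\alpha,\bar{\alpha})$ is the completion of $\big((E_\gamma)_0^r,\bar{\gamma}^r\big)$, via the injective homomorphism $\psi\colon(E_\gamma)_0^r\to A_\alpha$ with $\psi(\theta_{x,y})=\pr{x}{y}$, which has dense range $\gen\{\pr{x}{y}:x,y\in E_\gamma\}$, is compatible with the module actions ($\xi\,\psi(T)=\xi T$ for $\xi\in E_\gamma$, $T\in(E_\gamma)_0^r$), and has the property that $\bar{\alpha}$ restricts to $\bar{\gamma}^r$ on $\psi\big((E_\gamma)_0^r\big)$. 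Since by Corollary~\ref{cor:cbt} $E_\gamma^r$ is, by definition, exactly the completion of $\big((E_\gamma)_0^r,\bar{\gamma}^r\big)$, we obtain the canonical identification $(E_\gamma^r,\bar{\gamma}^r)=(A_\alpha,\bar{\alpha})$.

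For $\tilde{\alpha}^r=\alpha$, the inequality $\tilde{\alpha}^r\le\alpha$ follows from Proposition~\ref{prop:semi*level2a}(1) by taking the supremum in \eqref{eq:defphi}. For the reverse inequality, fix $a\in A$; by fullness the image of $a$ in $A_\alpha$ lies in $\psi\big((E_\gamma)_0^r\big)$, say it equals $\psi(T_a)$ with $T_a\in(E_\gamma)_0^r$. Using the compatibility of the two actions on $E_\gamma$, the density in $E_\gamma$ of the image of $E$, and the usual operator-norm density argument, one gets
\[
\tilde{\alpha}^r(a)=\sup_{\tilde{\alpha}(x)\le 1}\tilde{\alpha}(xa)=\sup_{\xi\in E_\gamma,\ \bar{\gamma}(\xi)\le 1}\bar{\gamma}(\xi T_a)=\bar{\gamma}^r(T_a)=\bar{\alpha}(\psi(T_a))=\alpha(a),
\]
where the fourth equality is the norm identification from Proposition~\ref{prop:cbt}; so $\tilde{\alpha}^r=\alpha$. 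The step I expect to need the most care is the bookkeeping in the last two paragraphs: one must make sure that the operator seminorm $\bar{\gamma}^r$ on $(E_\gamma)_0^r$, which by \eqref{eq:defphi} is literally the norm of the $E_\gamma$-module action, is matched under $\psi$ with the $C^*$-norm $\bar{\alpha}$ of $A_\alpha$ --- and this is precisely what Proposition~\ref{prop:cbt} supplies. A minor point to bear in mind is that $E$ itself need not be admissible, so one always works with $E_\gamma$ when invoking a $C^*$-level statement.
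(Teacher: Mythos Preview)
Your argument is correct and follows essentially the same route as the paper: build the full $C^*$-basic triple $(E_\gamma,A_\alpha,[\,,\,])$ by showing the action and pairing descend and are continuous, then invoke Proposition~\ref{prop:cbt} to identify $(E_\gamma^r,\bar{\gamma}^r)$ with $(A_\alpha,\bar{\alpha})$. For the last claim the paper does the same operator-norm computation you display, only without the preliminary splitting into $\le$ and $\ge$ (your chain of equalities already yields equality, so the separate $\tilde{\alpha}^r\le\alpha$ step is redundant).
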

\begin{proof}
Denote by $q:E\to E/N_\gamma\subseteq E_\gamma$ and $p:A\to
A/N_\alpha\subseteq A_\alpha$ the corresponding canonical
maps. 
We define $E/N_\gamma\times A/N_\alpha\to E/N_\gamma$ and
$[\,,\,]:E/N_\gamma\times E/N_\gamma\to A/N_\alpha$ such that
$q(x)p(a):=q(xa)$ and  
$[q(x),q(y)]:=p(\pr{x}{y})$ respectively. 
Let us see that these operations are continuous in the norms
$\bar{\gamma}$ and $\bar{\alpha}$. 
The action of $A/N_{\alpha}$ on $E/N_\gamma$ is continuous, for if $x,y\in
E$ and $a\in A$: 
\begin{equation*}
  \bar{\gamma}(q(x)p(a))
    =\bar{\gamma}(q(xa))
    =\gamma(xa)
    \leq\gamma(x)\alpha(a)
    =\bar{\gamma}(q(x))\bar{\alpha}(p(a))
\end{equation*}
And the sesquilinear map $[\,,\,]_{E/N_\gamma}$ also is continuous,
because:  
\begin{equation*}
  \bar{\alpha}([q(x),q(y)]_{E/F_\gamma})
    =\bar{\alpha}(p(\pr{x}{y}_{E}))
    =\alpha(\pr{x}{y}_{E})
    \leq \gamma(x)\gamma(y)
    =\bar{\gamma}(q(x))\bar{\gamma}(q(y)).
\end{equation*}
Therefore these operations extend to continuous maps $E_\gamma\times 
A_\alpha\to E_\gamma$ and $[\,,\,]:E_\gamma\times E_\gamma\to
A_\alpha$, so we obtain a full $C^*$-basic triple
$(E_\gamma,A_\alpha,[\,,\,])$. Therefore
$(A_\alpha,\alpha)=(E_\gamma^r,\bar{\gamma}^r)$ by
\ref{prop:cbt}. As for the last assertion, we have to prove that
$\gamma^r=\alpha$ or, equivalently, that
$\bar{\gamma^r}=\bar{\alpha}$. So it is enough to show that
$\gamma^r=\bar{\gamma}^rp$. But, if $a\in A$: 
\begin{equation*}
  \bar{\gamma}^r(p(a))
    =\sup\{\bar{\gamma}(q(x)p(a)):\bar{\gamma}(q(x))\leq 1\}
    =\sup\{\bar{\gamma}(q(xa)):\gamma(x)\leq 1\}
    =\gamma^r(a). 
\end{equation*}  
 \end{proof}

\par Propositions~\ref{prop:semi*level} and \ref{prop:semi*level2a}
allow us to define maps $\Phi_r:\msn(E)\to\msn_{cs}(E_0^r)$ and
$\Psi_r:\msn_{cs}(E_0^r)\to \msn(E)$ such that
$\Phi_r(\gamma)=\gamma^r$, given by \eqref{eq:defphi}, and
$\Psi_r(\alpha)=\tilde{\alpha}$, given by \eqref{eq:defpsia}. We want
to show that in fact $\Phi_r$ and $\Psi_r$ are mutually inverse maps
that preserve the order.    

\begin{theorem}\label{thm:semi*main}
Let $E$ be an admissible $*$-tring. Then the maps
$\Phi_r:\msn(E)\to\msn_{cs}(E_0^r)$ and
$\Psi_r:\msn_{cs}(E_0^r)\to\msn(E)$ are mutually inverse 
isomorphisms of lattices. Moreover
$\Phi_r(\mn(E))=\mn_{cs}(E_0^r)$ and $\Psi_r(\mn_{cs}(E_0^r))=\mn(E)$.     
\end{theorem}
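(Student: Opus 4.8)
The theorem is essentially an assembly of the preceding results, applied to the canonical full admissible basic triple $(E,E_0^r,\pr{\,}{\,}_r)$ furnished by Theorem~\ref{thm:*level}; for that triple $\msn_{cs}(E_0^r)=\msn_{cs}^{\pr{\,}{\,}_r}(E_0^r)$, so Propositions~\ref{prop:semi*level2a} and~\ref{prop:quotzettl} and Corollary~\ref{cor:fa} all apply verbatim to it. First I would check that the two maps are well defined, including on the sub-posets of norms. For $\gamma\in\msn(E)$, Proposition~\ref{prop:semi*level} gives $\gamma^r\in\msn(E_0^r)$, and inequality~\eqref{eq:cs} is exactly the Cauchy--Schwarz condition, so $\Phi_r(\gamma)=\gamma^r\in\msn_{cs}(E_0^r)$; the same proposition records $\gamma^r\in\mn(E_0^r)\iff\gamma\in\mn(E)$, whence $\Phi_r(\mn(E))\subseteq\mn_{cs}(E_0^r)$. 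In the opposite direction, part~(2) of Proposition~\ref{prop:semi*level2a} gives $\tilde\alpha\in\msn(E)$ for every $\alpha\in\msn_{cs}(E_0^r)$, so $\Psi_r$ is well defined; and since $E$ is admissible, $\pr{x}{x}_r=\theta_{x,x}=0$ forces $(x,x,x)=0$ and hence $x=0$, so part~(4) of the same proposition yields $\Psi_r(\mn_{cs}(E_0^r))\subseteq\mn(E)$.

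Next I would show that the two composites are the identities. The relation $\widetilde{\gamma^r}=\gamma$ is precisely the first assertion of Corollary~\ref{cor:fa}, so $\Psi_r\circ\Phi_r=\mathrm{id}_{\msn(E)}$. For the other composite, Corollary~\ref{cor:fa} only supplies $\tilde\alpha^r\leq\alpha$; the reverse inequality comes from Proposition~\ref{prop:quotzettl}, which is applicable because $(E,E_0^r,\pr{\,}{\,}_r)$ is \emph{full}, and which gives $\tilde\alpha^r=\alpha$ for all $\alpha\in\msn_{cs}(E_0^r)$, i.e.\ $\Phi_r\circ\Psi_r=\mathrm{id}_{\msn_{cs}(E_0^r)}$. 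Combined with the first paragraph this also yields the claimed equalities $\Phi_r(\mn(E))=\mn_{cs}(E_0^r)$ and $\Psi_r(\mn_{cs}(E_0^r))=\mn(E)$.

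Finally I would verify that both maps preserve order, and deduce the lattice statement. Monotonicity is immediate: $\gamma_1\leq\gamma_2$ implies $\gamma_1^r\leq\gamma_2^r$ from the defining supremum~\eqref{eq:defphi}, and $\alpha_1\leq\alpha_2$ implies $\tilde\alpha_1\leq\tilde\alpha_2$ from~\eqref{eq:defpsia}. A bijection between posets that is order preserving with order-preserving inverse is an order isomorphism, and such a map carries every existing supremum and infimum to one; since $\msn(E)$ is a lattice (the join of two $C^*$-seminorms is their pointwise maximum, which is again a $C^*$-seminorm by a direct check from Definition~\ref{defn:cnorm}, and binary meets exist as largest common lower bounds because the pointwise supremum of any family of $C^*$-seminorms dominated by a fixed one is again a $C^*$-seminorm), so is $\msn_{cs}(E_0^r)$, and $\Phi_r,\Psi_r$ are mutually inverse lattice isomorphisms.

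I expect no genuine obstacle to remain: all of the analytic content is already carried by Propositions~\ref{prop:semi*level}, \ref{prop:semi*level2a} and~\ref{prop:quotzettl} together with Corollary~\ref{cor:fa}. The only points that need care are confirming that the hypotheses of Proposition~\ref{prop:quotzettl} (fullness of $(E,E_0^r,\pr{\,}{\,}_r)$, guaranteed by Theorem~\ref{thm:*level}) are in force, since this is exactly what upgrades $\tilde\alpha^r\leq\alpha$ to an equality, and noting that admissibility of $E$ supplies the implication $\pr{x}{x}_r=0\Rightarrow x=0$ used in part~(4) of Proposition~\ref{prop:semi*level2a} for the statement about norms.
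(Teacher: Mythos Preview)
Your proof is almost entirely correct and follows the paper's approach closely; the use of Corollary~\ref{cor:fa} for $\Psi_r\Phi_r=\mathrm{id}$ and Proposition~\ref{prop:quotzettl} for $\Phi_r\Psi_r=\mathrm{id}$ is exactly what the paper does, and your treatment of the norm case and of the lattice structure is fine.

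There is, however, one genuine gap: the claim that $\gamma_1\leq\gamma_2\Rightarrow\gamma_1^r\leq\gamma_2^r$ is ``immediate from the defining supremum~\eqref{eq:defphi}'' is not justified. In
\[
\gamma^r(a)=\sup\{\gamma(xa):\gamma(x)\leq 1\},
\]
increasing $\gamma$ increases each value $\gamma(xa)$ but \emph{shrinks} the constraint set $\{x:\gamma(x)\leq 1\}$, so the two effects pull in opposite directions. Concretely, for $x$ with $\gamma_1(x)\leq 1$ one only gets $\gamma_1(xa)\leq\gamma_2(xa)\leq\gamma_2(x)\gamma_2^r(a)$, and $\gamma_2(x)$ need not be $\leq 1$; the naive bound is useless. (This is the familiar fact that the operator norm of a fixed linear map is not monotone in the underlying norm.) Monotonicity of $\Psi_r$ alone does not rescue you either, since an order-preserving bijection of posets need not have order-preserving inverse.

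The paper closes this gap by passing to Hausdorff completions: $\gamma_1\leq\gamma_2$ makes $\mathrm{id}:(E,\gamma_2)\to(E,\gamma_1)$ contractive, hence induces a $*$-tring homomorphism $\pi:E_{\gamma_2}\to E_{\gamma_1}$, and Proposition~\ref{prop:cbt2} then yields a \emph{contractive} $C^*$-algebra homomorphism $\pi^r:E_{\gamma_2}^r\to E_{\gamma_1}^r$ intertwining the canonical maps from $E_0^r$; contractivity of $\pi^r$ gives $\gamma_1^r(a)\leq\gamma_2^r(a)$ for all $a\in E_0^r$. So the $C^*$-structure (automatic contractivity of $*$-homomorphisms) is genuinely used here, and you should invoke Proposition~\ref{prop:cbt2} rather than the raw supremum formula.
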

\begin{proof}
By Corollary~\ref{cor:fa} we have $\Psi_r\Phi_r=Id_{\msn(E)}$, and
Proposition~\ref{prop:quotzettl} shows that
$\Phi_r\Psi_r=Id_{\msn_{cs}(E_0^r)}$, so the maps 
$\Phi_r$ and $\Psi_r$ are mutually inverse. Besides, it follows from 
\ref{prop:semi*level} that $\Phi_r(\gamma)$ is a norm if and only if
so is $\gamma$. On the other hand is clear that $\Psi_r$ preserves the
order, thus it remains to be shown that $\Phi_r$ also preserves the
order. To this end consider $\gamma_1\leq\gamma_2$ in $\msn(E)$. Since
$id:(E,\gamma_2)\to (E,\gamma_1)$ is continuous, it induces a
\hm $\pi:E_{\gamma_2}\to E_{\gamma_1}$, which in turn induces,  
according with Proposition~\ref{prop:cbt2}, a \hm
$\pi^r:E_{\gamma_2}^r\to E_{\gamma_1}^r$, which is necessarily
contractive. 
Thus if $a\in E_0^r$, we have:
\begin{equation*}
  \gamma_1^r(a)
    =\bar{\gamma_1^r}(\pi^r(a+N_{\gamma_2^r}))
    \leq\bar{\gamma_2^r}(a+N_{\gamma_1^r})
    =\gamma_2^r(a), 
\end{equation*}
which shows that $\gamma_1^r\leq\gamma_2^r$.
 \end{proof}
\par All we have done to the right side can be done also to the left
side. For example, every admissible $*$-tring $E$ induces a (left)
admissible and full basic triple $(E,E_0^l,\pr{\,}{\,}_l)$, we have an
isomorphism of posets $\Phi_l:\msn(E)\to\msn_{cs}(E_0^l)$ with inverse
$\Psi_l:\msn_{cs}(E_0^l)\to \msn(E)$, given by 
$\Phi_l(\gamma)=\gamma^l$ and $\Psi(\alpha)=\tilde{\alpha}$, where
$\gamma^l(a):=\sup\{\gamma(ax):\gamma(x)\leq 1\}$ and
$\tilde{\alpha}(x):=\alpha(\pr{x}{x}_l)^{1/2}$, etc. Then we obtain
the following consequences: 
 
\begin{corollary}\label{cor:semi*main}
Let $E$ be an admissible $*$-tring. Then
$\Phi_r\Psi_l:\msn_{cs}(E_0^l)\to\msn_{cs}(E_0^r)$ is an
isomorphism of lattices such that
$\Phi_r\Psi_l(\mn_{cs}(E_0^l))=\mn_{cs}(E_0^r)$. The inverse of
$\Phi_r\Psi_l$ is $\Phi_l\Psi_r$.  
\end{corollary}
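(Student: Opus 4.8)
**Plan for Corollary~\ref{cor:semi*main}.**

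The strategy is to reduce everything to the corresponding right-hand-side statement, Theorem~\ref{thm:semi*main}, together with its left-hand analogue which the paragraph just before the corollary asserts holds by symmetry. Concretely: Theorem~\ref{thm:semi*main} tells us that $\Phi_r$ and $\Psi_r$ are mutually inverse lattice isomorphisms between $\msn(E)$ and $\msn_{cs}(E_0^r)$, and that they restrict to bijections between $\mn(E)$ and $\mn_{cs}(E_0^r)$. The left version gives the same for $\Phi_l,\Psi_l$ between $\msn(E)$ and $\msn_{cs}(E_0^l)$, and between $\mn(E)$ and $\mn_{cs}(E_0^l)$. Since a composition of lattice isomorphisms is a lattice isomorphism, $\Phi_r\Psi_l:\msn_{cs}(E_0^l)\to\msn(E)\to\msn_{cs}(E_0^r)$ is a lattice isomorphism. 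This is essentially the whole proof — it is a one-line diagram chase.

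First I would note that $\Psi_l:\msn_{cs}(E_0^l)\to\msn(E)$ is a lattice isomorphism with inverse $\Phi_l$ (left analogue of Theorem~\ref{thm:semi*main}), and likewise $\Phi_r:\msn(E)\to\msn_{cs}(E_0^r)$ is a lattice isomorphism with inverse $\Psi_r$. Hence $\Phi_r\Psi_l$ is a lattice isomorphism, being the composite of two. Its inverse is the composite of the inverses in the reverse order, namely $\Phi_l\Psi_r$, which gives the last sentence of the statement. For the claim about $C^*$-norms, I would simply track the restrictions: $\Psi_l$ maps $\mn_{cs}(E_0^l)$ onto $\mn(E)$, and $\Phi_r$ maps $\mn(E)$ onto $\mn_{cs}(E_0^r)$, so their composite maps $\mn_{cs}(E_0^l)$ onto $\mn_{cs}(E_0^r)$. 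Both of these facts are the "Moreover" parts of Theorem~\ref{thm:semi*main} (right and left versions).

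There is no real obstacle here: the corollary is a formal consequence of the two symmetric halves of the main theorem. If one wanted a genuinely self-contained argument (not invoking the left analogue as already established), the only point requiring a moment's thought is verifying that the left-hand constructions $E_0^l$, $\pr{\,}{\,}_l$, $\gamma^l$, and the associated $\Phi_l,\Psi_l$ really are obtained from the right-hand ones by applying the reversion autofunctor $E\mapsto E^*$ of Section~2 — indeed $E_0^l$ for $E$ is $E_0^r$ for $E^*$, and a seminorm is a $C^*$-seminorm on $E$ iff it is one on $E^*$ (noted after Definition~\ref{defn:+}) — so that Theorem~\ref{thm:semi*main} applied to $E^*$ yields exactly the left-hand statement. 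Granting that, the proof is the composition argument above, and I would write it in two or three sentences.
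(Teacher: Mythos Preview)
Your proposal is correct and matches the paper's approach exactly: the paper gives no explicit proof for this corollary, treating it as an immediate consequence of Theorem~\ref{thm:semi*main} together with its left-hand analogue stated in the paragraph just before. Your observation that the left version follows by applying Theorem~\ref{thm:semi*main} to the reverse $*$-tring $E^*$ is a nice way to make that analogue precise, but the composition argument you outline is all that is needed.
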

\par As mentioned at the end of \ref{subsubsec:seminorms} in
\cite{z2}[Theorem~3.1], Zettl proved that any $C^*$-tring is of the
form $E=E_+\oplus E_-$, where $E_+$ and $E_-^{op}$ are isomorphic to a
TRO. In fact we have $E_{+}:=\{ x\in E: \pr{x}{x}_r \textrm{ is
  positive}\}$, $E_{-}:=\{ x\in E: -\pr{x}{x}_r \textrm{ is
  positive}\}$, and $E_+$ and $E_-$ are ideals of $E$ such that
$\pr{E_{+}}{E_{-}}=0$. If $E_p:=E_+\oplus E_-^{op}$, we will have that
$E_p^r=E^r$ and $E_p^l=E^l$, and now $E_p$ is a Morita-Rieffel
equivalence between $E^l$ and $E^r$. Thus we have:  
  
\begin{corollary}\label{cor:preeq}
Let $E$ be an admissible $*$-tring and $\gamma\in\msn(E)$. Then
$E_{\gamma}^l$ and $E_{\gamma}^r$ are Morita-Rieffel equivalent \css.     
\end{corollary}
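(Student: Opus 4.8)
The statement to prove is Corollary~\ref{cor:preeq}: if $E$ is an admissible $*$-tring and $\gamma\in\msn(E)$, then the $C^*$-algebras $E_\gamma^l$ and $E_\gamma^r$ are Morita-Rieffel equivalent. The strategy is the one already sketched in the paragraph preceding the statement: pass from $E$ to its \emph{positivization} $E_p$, observe that positivization does not change the left and right associated $*$-algebras (at the completed level), and exhibit $E_{p,\gamma}$ as an honest imprimitivity bimodule.

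First I would recall Zettl's fundamental decomposition $E=E_+\oplus E_-$ (discussed on page~\pageref{decfund}), with $E_+$ a TRO and $E_-$ an anti-TRO, and set $E_p:=E_+\oplus E_-^{\mathrm{op}}$, a \emph{positive} \ct in the sense of Definition~\ref{defn:+}. The key algebraic point is that replacing the ternary product on $E_-$ by its negative does not affect the one-sided inner products in a way that matters for the associated $*$-algebras: since $\pr{E_+}{E_-}_r=0$ (the summands are orthogonal ideals) and flipping the sign of $\mu$ on $E_-$ flips the sign of $\theta_{y,z}$ for $y,z\in E_-$ but leaves the \emph{span} $\gen\{\theta_{y,z}\}$ unchanged, one gets a canonical identification $E_{p,0}^r\cong E_0^r$ as $*$-algebras, compatibly with the module actions; likewise on the left, $E_{p,0}^l\cong E_0^l$. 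The $C^*$-seminorm $\gamma$ on $E$ transports to a $C^*$-seminorm on $E_p$ (it only sees $|\mu|$ through condition~(2) of Definition~\ref{defn:cnorm}, which is sign-insensitive), and under the isomorphisms of Theorem~\ref{thm:semi*main} the induced seminorms $\gamma^r$ on $E_0^r$ and $\gamma^l$ on $E_0^l$ are unchanged. Hence $E_{p,\gamma}^r=E_\gamma^r$ and $E_{p,\gamma}^l=E_\gamma^l$ as $C^*$-algebras, and it suffices to produce the equivalence for the positive \ct $E_{p,\gamma}$.

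So replace $E$ by $E_p$, which we may now assume positive; write $F:=E_{p,\gamma}$, a positive \ct, with $A:=F^r$ and $B:=F^l$ the right and left completed $*$-algebras. By Corollary~\ref{cor:cbt} (Zettl), $(F,A,\pr{\,}{\,}_r)$ is a full $C^*$-basic triple, so $F$ is a full right Hilbert $A$-module with $\pr{x}{x}_r\ge 0$ (positivity of the \ct is exactly what makes $\pr{\,}{\,}_r$ an honest positive-definite Hilbert-module inner product rather than a "J-inner product"); symmetrically $F$ is a full left Hilbert $B$-module with $\pr{x}{x}_l\ge 0$. The compatibility condition $\pr{x}{y}_l\,z=x\,\pr{y}{z}_r$ for $x,y,z\in F$ is built into the definitions: $\pr{x}{y}_l z=\theta^l_{x,y}(z)$ and $x\pr{y}{z}_r=\theta_{y,z}(x)$ — wait, one must check these agree, and this is precisely the five-term associativity axiom of Definition~\ref{defn:1} rewritten, namely $(x,y,z)=\mu(x,y,z)$ is computed the same way whether one brackets it as "left action of $\pr{x}{y}_l$ on $z$" or "right action of $\pr{y}{z}_r$ on $x$," because both equal $\mu(x,y,z)$. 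Together with fullness on both sides, these are exactly the axioms of an $A$–$B$ imprimitivity bimodule (equivalently Morita-Rieffel equivalence bimodule), so $A$ and $B$ are Morita-Rieffel equivalent, which is the claim.

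**Main obstacle.** The genuinely delicate point is the bookkeeping in the reduction step: verifying carefully that positivization induces an \emph{isomorphism} $E_0^r\cong E_{p,0}^r$ of $*$-algebras (not merely of algebras — one must track the involution through the sign change on $E_-$) and that it intertwines the module actions and is isometric for the induced $C^*$-seminorms, so that passing to completions is legitimate. Once one is working with a genuinely positive \ct, the rest is essentially unwinding definitions: the positivity of the one-sided inner products and the compatibility identity $\pr{x}{y}_l z = x\pr{y}{z}_r$ are immediate consequences of, respectively, Zettl's structure theorem and the defining associativity axiom of a $*$-tring, and fullness on both sides holds by construction of $E_0^r$ and $E_0^l$ as spans of the $\theta$'s.
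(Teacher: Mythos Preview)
Your overall strategy---positivize and then recognize the resulting object as an imprimitivity bimodule---is exactly the paper's, but you apply Zettl's fundamental decomposition at the wrong level. The decomposition $E=E_+\oplus E_-$ is a theorem about \emph{$C^*$-trings}: the summands are defined via positivity of $\pr{x}{x}_r$ in a $C^*$-algebra, and the proof uses $C^*$-algebraic tools (functional calculus, etc.). The object $E$ in the statement is only an admissible $*$-tring, with no norm and no completeness, so there is no Zettl decomposition of $E$ to invoke, and hence no $E_p$ to form before completing. Your ``main obstacle'' (the bookkeeping for $E_{p,0}^r\cong E_0^r$) is therefore a computation for an object that does not exist in general.

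The fix is simply to reverse the order of operations, which is what the paper does: complete first, then decompose. The Hausdorff completion $E_\gamma$ \emph{is} a $C^*$-tring, so Zettl applies to it, yielding $(E_\gamma)_p=(E_\gamma)_+\oplus (E_\gamma)_-^{\mathrm{op}}$. Since $\pr{(E_\gamma)_+}{(E_\gamma)_-}=0$ and the sign flip on the $-$ summand only negates the inner products there, one has $(E_\gamma)_p^r=E_\gamma^r$ and $(E_\gamma)_p^l=E_\gamma^l$ directly at the $C^*$-level, with no algebraic-level identification or seminorm transport needed. Then $(E_\gamma)_p$ is a positive $C^*$-tring, hence a full right Hilbert $E_\gamma^r$-module and a full left Hilbert $E_\gamma^l$-module satisfying $\pr{x}{y}_l z=x\pr{y}{z}_r$, i.e.\ an imprimitivity bimodule---exactly as in the second half of your argument.
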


\par In general we will have to deal with algebras that strictly
contain $E_0^r$, but whose $C^*$-seminorms are essentially the same, as the
following results show.  

\begin{proposition}\label{prop:wbijnorms}
Let $I$ be a selfadjoint ideal of a $*$-algebra $A$, and suppose that
$\al\in\msn(I)$. Let $\al':A\to [0,\infty]$ be given by
$\al'(a):=\sup\{\al(ax):x\in I,\al(x)\leq 1\}$. For every $a\in A$
consider $L_a:I\to I$, such that $L_a(x)=ax$, $\forall x\in I$. Then
the following statements are equivalent:
\be \item $\al'(a)<\infty$, $\forall a\in A$.
    \item $L_a$ is bounded, $\forall a\in A$. 
    \item $\al$ can be extended to a $C^*$-seminorm on $A$.
\ee
Suppose that one of the conditions above holds true. Then:
\be[(a)]
 \item $\al'$ is a $C^*$-seminorm on $A$, and $\al'\leq\beta$ for
       every $\beta\in \msn(A)$ that extends~$\al$.
 \item If $\al$ is a norm, then $\al'$ is a norm if and only if 
       $\text{Ann}_A(I)=0$, where $\text{Ann}_A(I):=\{a\in A:\,ax=0,
       \forall x\in I\}$.  
\ee  
\end{proposition}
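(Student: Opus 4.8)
The plan is to prove the equivalence of the three conditions first, and then establish (a) and (b) under the assumption that they hold.

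For the equivalence, the implications $(2)\Leftrightarrow(1)$ are essentially a restatement: $L_a$ is bounded (as an operator on the seminormed space $(I,\al)$) precisely when $\sup\{\al(ax):\al(x)\le 1\}<\infty$, which is $\al'(a)<\infty$. The implication $(3)\Rightarrow(1)$ is easy: if $\beta\in\msn(A)$ extends $\al$, then for $x\in I$ with $\al(x)\le 1$ we have $\al(ax)=\beta(ax)\le\beta(a)\beta(x)=\beta(a)\al(x)\le\beta(a)$ (using $\beta(ax)\le\beta(a)\beta(x)$, which holds for a $C^*$-seminorm since for a $*$-algebra a $C^*$-seminorm is submultiplicative), so $\al'(a)\le\beta(a)<\infty$; this simultaneously gives the inequality $\al'\le\beta$ needed for (a). The heart of the matter is $(1)\Rightarrow(3)$. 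Here I would mimic the proof of Proposition~\ref{prop:extbanach}, but without the Banach hypothesis: form the Hausdorff completion $I_\al$ of $(I,\al)$, take a faithful representation $\pi:I_\al\to B(H)$, and compose with the canonical map $p:I\to I_\al$ to get a representation $\pi p:I\to B(H)$. The issue is to extend this to a representation of $A$. One cannot invoke \cite[VI.19.11]{fd} directly since $A$ need not be a Banach $*$-algebra; instead I would use the nondegenerate part of the representation and the multiplier-type argument: under condition (1), for each $a\in A$ the left-multiplication operator $L_a$ on $I$ descends to a bounded operator $\bar{L}_a$ on $I/N_\al$, hence extends to $I_\al$, and via $\pi$ yields a bounded operator on the essential subspace $\overline{\pi p(I)H}$; checking that $a\mapsto \bar{L}_a$ (transported through $\pi$) is a $*$-homomorphism uses the ideal relations $a(xy)=(ax)y$, $(ax)^*=x^*a^*$ for $x,y\in I$, $a\in A$, and faithfulness of $\pi$. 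Then $a\mapsto\norm{\rho(a)}$ is the desired $C^*$-seminorm on $A$ extending $\al$.

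For part (a), once $\al'$ is known to be finite-valued, one checks it is a $C^*$-seminorm by direct estimates: the submultiplicativity $\al'(ab)\le\al'(a)\al'(b)$ and the $*$-invariance and $C^*$-identity $\al'(a^*a)=\al'(a)^2$ follow by the standard double-centralizer / left-regular-representation computation on the ideal $I$, exactly as one proves that the operator seminorm coming from left multiplication on a dense ideal is a $C^*$-seminorm; this is parallel to the computations in Proposition~\ref{prop:semi*level}. The minimality $\al'\le\beta$ was already obtained above in proving $(3)\Rightarrow(1)$.

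For part (b), assume $\al$ is a norm. If $\ann_A(I)=0$, suppose $\al'(a)=0$; then $\al(ax)=0$ for all $x\in I$ with $\al(x)\le 1$, hence (by homogeneity) for all $x\in I$, so $ax=0$ for all $x\in I$ since $\al$ is a norm, i.e. $a\in\ann_A(I)=0$. Conversely, if $\ann_A(I)\ne 0$, pick $0\ne a\in\ann_A(I)$; then $ax=0$ for all $x\in I$, so $\al'(a)=0$ while $a\ne 0$, showing $\al'$ is not a norm. The main obstacle is the extension step $(1)\Rightarrow(3)$: one must replace the Banach-algebra machinery of \cite{fd} by a hands-on construction of the representation of $A$ on the completion of $(I,\al)$, making sure the extended operators are bounded (this is exactly what condition (1) buys) and that the extension is multiplicative and $*$-preserving.
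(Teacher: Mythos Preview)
Your argument is correct, and parts $(1)\Leftrightarrow(2)$, $(3)\Rightarrow(1)$, the minimality $\al'\le\beta$, and part (b) match the paper's proof essentially verbatim. The difference is in how you handle $(1)\Rightarrow(3)$.

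The paper does not build a representation at all. It simply proves part (a) first: assuming $\al'(a)<\infty$ for all $a$, one checks directly that $\al'$ is submultiplicative and that
\[
\al'(a^*a)\ \ge\ \sup\{\al(x^*a^*ax):\al(x)\le 1\}\ \ge\ \sup\{\al(ax)^2:\al(x)\le 1\}\ =\ \al'(a)^2,
\]
so $\al'\in\msn(A)$. Then one observes that $\al'$ actually extends $\al$: for $a\in I$, the general identity $\beta(a)=\sup\{\beta(ab):\beta(b)\le 1\}$ (valid for any $C^*$-seminorm $\beta$, by taking $b=a^*/\beta(a)$) applied with $\beta=\al$ on $I$ gives $\al(a)=\al'(a)$. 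This single observation yields $(1)\Rightarrow(3)$ immediately, with $\al'$ itself as the extension.

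Your route through the Hausdorff completion $I_\al$, a faithful representation, and a multiplier-type extension is correct, but it is redundant with the computation you already do in (a). Indeed, if you unwind your construction, the operator $\bar{L}_a$ is exactly left multiplication by (the image of) $a$ in $M(I_\al)$, and its norm is $\sup\{\al(ax):\al(x)\le 1\}=\al'(a)$; so the $C^*$-seminorm you obtain from the representation \emph{is} $\al'$. In other words, the ``main obstacle'' you identify is not really there: the direct estimates you carry out for (a), together with the one-line check that $\al'|_I=\al$, already close the loop. The paper's approach is shorter and avoids invoking representation theory or multipliers; your approach has the (minor) conceptual advantage of explaining \emph{why} $\al'$ is a $C^*$-seminorm, namely because it is an operator norm on a $C^*$-algebra.
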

\begin{proof}
Since $\norm{L_a}=\al'(a)$, we have that conditions 1. and 2. are
equivalent. It is also clear that $3.\Rightarrow 1.$ Suppose now that
$\al'(a)<\infty$, $\forall a\in A$. Let show that $\al'$ is a
$C^*$-seminorm on $A$ that extends $\al$. It is easy to check that
$\al'(ab)\leq\al'(a)\al'(b)$, $\forall a,b\in A$. Moreover:
\begin{align*}
\al'(a^*a)&=\sup\{\al(a^*ax):\,x\in I, \al(x)\leq 1\}
            \geq\sup\{\al(x^*a^*ax):\,x\in I, \al(x)\leq 1\}\\
          &  \geq \sup\{\al(ax)^2:\,x\in I, \al(x)\leq 1\}
            =\al'(a)^2.
\end{align*}
Therefore $\al'\in\msn(A)$. 
The fact that $\al'$ extends $\al$, as well as 
assertion (a), are consequences of the fact that for every $C^*$-seminorm
$\beta$ on $A$ one has that $\beta(a)=\sup\{\beta(ab):\,\beta(b)\leq
1\}$. Finally, suppose that $\al$ is a norm on $I$. Then
$\al'(a)=0\iff\al(ax)=0$, $\forall x\in I$, that is $\al'(a)=0\iff
a\in\text{Ann}_A(I)$.    
 \end{proof}

\begin{theorem}\label{thm:main0}
Let $(E,A,\pr{\,}{\,})$ be an admissible basic triple, 
with $E$ a faithful $A$-module, and admissible as $*$-tring. Suppose
that any $C^*$-seminorm on $E_0^r$ can be extended in a unique way to
a $C^*$-seminorm on $A$ (recall 
Corollary~\ref{cor:adfaith}). Then the lattices $\msn(E)$ and
$\msn_{cs}^{\pr{\,}{\,}}(A)$ are isomorphic. If in addition
$\textrm{Ann}_A(E_0^r)=0$, the posets $\mn(E)$ and
$\mn_{cs}^{\pr{\,}{\,}}(A)$ are isomorphic as well.      
\end{theorem}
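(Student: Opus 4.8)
The plan is to assemble the isomorphism from pieces already in hand. First I would recall that by Corollary~\ref{cor:adfaith}, since $E$ is admissible as a $*$-tring and faithful as an $A$-module, the canonical map $\psi:E_0^r\to A$ is an injective homomorphism of $*$-algebras, so we may regard $E_0^r$ as a (not necessarily closed) $*$-ideal of $A$ with $\pr{x}{y}_r=\pr{x}{y}_A$ for all $x,y\in E$. Under this identification, for a $C^*$-seminorm $\alpha$ on $A$ the Cauchy--Schwarz condition defining $\msn_{cs}^{\pr{\,}{\,}}(A)$ restricts to the condition defining $\msn_{cs}(E_0^r)$, and conversely every element of $\msn_{cs}(E_0^r)$ extends uniquely to $A$ by hypothesis (this is exactly where the uniqueness-of-extension assumption enters). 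So I would define a map $\msn_{cs}(E_0^r)\to\msn_{cs}^{\pr{\,}{\,}}(A)$ by $\alpha\mapsto\alpha'$, the unique extension, with inverse given by restriction $\beta\mapsto\beta|_{E_0^r}$; I must check that $\alpha'$ really lands in $\msn_{cs}^{\pr{\,}{\,}}(A)$, i.e.\ that the Cauchy--Schwarz inequality survives the extension. Since $\pr{x}{x}_r,\pr{y}{y}_r,\pr{x}{y}_r$ all lie in $E_0^r$ and $\alpha'$ agrees with $\alpha$ there, this is immediate. Both maps are clearly order-preserving, hence we get a lattice isomorphism $\msn_{cs}(E_0^r)\cong\msn_{cs}^{\pr{\,}{\,}}(A)$.

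Next I would compose this with the lattice isomorphism $\Phi_r:\msn(E)\to\msn_{cs}(E_0^r)$ (with inverse $\Psi_r$) furnished by Theorem~\ref{thm:semi*main}, which applies because $E$ is an admissible $*$-tring. The composite $\msn(E)\xrightarrow{\Phi_r}\msn_{cs}(E_0^r)\xrightarrow{\sim}\msn_{cs}^{\pr{\,}{\,}}(A)$ is then the desired isomorphism of lattices; concretely it sends $\gamma$ to the unique $C^*$-seminorm on $A$ extending $\gamma^r$, and its inverse sends $\alpha$ to $\widetilde{\alpha|_{E_0^r}}=\tilde\alpha$ (note $\tilde\alpha$ computed via $\pr{\,}{\,}_A$ and via $\pr{\,}{\,}_r$ coincide since they agree on diagonal inner products). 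One can also phrase the inverse directly as $\alpha\mapsto\tilde\alpha$ using Proposition~\ref{prop:semi*level2a}, which is perhaps cleaner: parts (1)--(2) there say $\tilde\alpha\in\msn(E)$, and Corollary~\ref{cor:fa} together with Proposition~\ref{prop:quotzettl} (applied after passing to the full sub-triple $\textrm{span}\pr{E}{E}_A=E_0^r$) give that $\gamma\mapsto\gamma^r\mapsto(\gamma^r)'$ and $\alpha\mapsto\tilde\alpha\mapsto(\tilde\alpha)^r$ are mutually inverse, once one knows $(\tilde\alpha)^r=\alpha|_{E_0^r}$ extends back to $\alpha$ by uniqueness.

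For the statement about norms, I would invoke Proposition~\ref{prop:semi*level} (resp.\ the norm clause of Theorem~\ref{thm:semi*main}): $\gamma\in\mn(E)$ iff $\gamma^r\in\mn(E_0^r)$. So it remains to see that under the identification-and-extension map, $\gamma^r$ being a norm on $E_0^r$ is equivalent to its extension being a norm on $A$. This is precisely Proposition~\ref{prop:wbijnorms}(b) with $I=E_0^r$: the unique extension $\alpha'$ is a norm iff $\alpha$ is a norm on $E_0^r$ \emph{and} $\textrm{Ann}_A(E_0^r)=0$. Hence under the hypothesis $\textrm{Ann}_A(E_0^r)=0$, the isomorphism restricts to a bijection $\mn(E)\leftrightarrow\mn_{cs}^{\pr{\,}{\,}}(A)$.

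**Main obstacle.** The genuinely delicate point is bookkeeping about which triple Theorem~\ref{thm:semi*main} and Proposition~\ref{prop:quotzettl} are applied to: those results are stated for the canonical triple $(E,E_0^r,\pr{\,}{\,}_r)$, whereas here we start from a given $(E,A,\pr{\,}{\,})$ that need not be full, so one must first replace $A$ by $\textrm{span}\pr{E}{E}_A$, identify that with $E_0^r$ via Corollary~\ref{cor:adfaith}, and only then extend seminorms up to $A$. Getting the two notions of $\tilde\alpha$ (via $\pr{\,}{\,}_A$ and via $\pr{\,}{\,}_r$) to literally coincide, and checking that restriction and unique-extension are honest mutual inverses between $\msn_{cs}^{\pr{\,}{\,}}(A)$ and $\msn_{cs}(E_0^r)$ rather than merely between $\msn_{cs}^{\pr{\,}{\,}}(A)$ and $\msn_{cs}^{\pr{\,}{\,}}(\textrm{span}\pr{E}{E}_A)$, is where care is needed; everything else is a direct appeal to the cited results.
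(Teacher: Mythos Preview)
Your proposal is correct and follows essentially the same route as the paper: identify $E_0^r$ with a $*$-ideal of $A$ via Corollary~\ref{cor:adfaith}, use the unique-extension hypothesis to get a lattice isomorphism $\msn_{cs}(E_0^r)\cong\msn_{cs}^{\pr{\,}{\,}}(A)$ (the Cauchy--Schwarz condition being preserved because all inner products already live in $E_0^r$), and compose with Theorem~\ref{thm:semi*main}. Your treatment of the norm case via Proposition~\ref{prop:wbijnorms}(b) is in fact more explicit than the paper's, which simply asserts that ``the same argument applies'' once $\textrm{Ann}_A(E_0^r)=0$.
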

\begin{proof}
Since any $C^*$-seminorm on $E_0^r$ can be uniquely extended to a
$C^*$-seminorm on $A$, we are allowed to identify $\msn(A)$ and 
$\msn(E_0^r)$ as lattices, and it is clear that this
yields also an identification between $\msn_{cs}^{\pr{\,}{\,}}(A)$ and
$\msn_{cs}(E_0^r)$, and the latter is isomorphic to $\msn (E)$ by
\ref{thm:semi*main}. If moreover $\textrm{Ann}_A(E_0^r)=0$, the same
argument applies to $\mn(E)$ and $\mn_{cs}(A)$.  
 \end{proof}

\par In case $A$ is a Banach $*$-algebra,  any
$C^*$-seminorm  on a $*$-ideal can be extended to a $C^*$-seminorm
defined on the whole algebra. Moreover we have: 

\begin{proposition}\label{prop:extbanach2}
Let $A$ be an admissible Banach $*$-algebra and $I$ a dense $*$-ideal
of $A$, not necessarily closed. Then any $C^*$-norm on $I$ can be
uniquely extended to a $C^*$-norm on $A$.    
\end{proposition}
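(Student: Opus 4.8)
The plan is to obtain the extension in two stages: first produce a $C^*$-seminorm on $A$ restricting to the given norm on $I$ --- this is where the Banach hypothesis enters, through Proposition~\ref{prop:extbanach} --- and then check that this seminorm is automatically a \emph{norm}, which is where admissibility of $A$ and density of $I$ are used.

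Write $\alpha$ for the given $C^*$-norm on $I$. Since $A$ is a Banach $*$-algebra and $I$ a $*$-ideal, Proposition~\ref{prop:extbanach} guarantees that $\alpha$ extends (uniquely, as $I$ is dense) to a $C^*$-seminorm on $A$; in particular condition~3 of Proposition~\ref{prop:wbijnorms} holds. Hence by that proposition the associated map $\alpha'$, $\alpha'(a)=\sup\{\alpha(ax):x\in I,\ \alpha(x)\le 1\}$, is a $C^*$-seminorm on $A$ that restricts to $\alpha$ on $I$, and part~(b) there says that, $\alpha$ being a norm, $\alpha'$ is a norm if and only if $\text{Ann}_A(I)=0$. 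So the whole statement reduces to verifying $\text{Ann}_A(I)=0$ and then recording uniqueness.

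For the annihilator, suppose $a\in A$ satisfies $ax=0$ for every $x\in I$. Using that $I$ is dense in $A$, choose $x_n\in I$ with $x_n\to a^*$; since multiplication in the Banach algebra $A$ is continuous, $aa^*=\lim_n ax_n=0$. Admissibility of $A$ means $b^*b=0$ implies $b=0$; applying this to $b=a^*$ (legitimate, since admissibility passes to the reverse $*$-tring, as noted in Section~2) we conclude $a^*=0$, so $a=0$. Therefore $\text{Ann}_A(I)=0$, and $\alpha'$ is a $C^*$-norm on $A$ whose restriction to $I$ is $\alpha$.

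Finally, uniqueness of the $C^*$-norm extension is immediate: any $C^*$-norm on $A$ extending $\alpha$ is in particular a $C^*$-seminorm on $A$ extending $\alpha$, and such an extension is unique by the last sentence of Proposition~\ref{prop:extbanach}, because $I$ is dense in $A$. The only genuine computation here is the equality $\text{Ann}_A(I)=0$; everything else is assembling facts already proved. The point that most deserves care is the use of admissibility: it is phrased as $b^*b=0\Rightarrow b=0$, so one must pass to adjoints in order to draw the conclusion $aa^*=0\Rightarrow a=0$ that the argument actually needs.
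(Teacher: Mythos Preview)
Your proof is correct and follows essentially the same route as the paper: invoke Proposition~\ref{prop:extbanach} for the (unique) $C^*$-seminorm extension, identify it with $\alpha'$ via Proposition~\ref{prop:wbijnorms}, and show $\text{Ann}_A(I)=0$ using density of $I$ and admissibility of $A$. The paper's argument is terser but identical in substance; your explicit remark about applying admissibility to $b=a^*$ is a fair point of care that the paper leaves implicit.
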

\begin{proof}
Let $\alpha\in\mn(I)$. By \ref{prop:extbanach} $\alpha$ has a unique
extension to a $C^*$-seminorm on $A$, and by \ref{prop:wbijnorms} this
extension must be $\alpha'$ such that
$\alpha'(a)=\sup\{\alpha(ax):x\in I,\alpha(x)\leq 1\}$. Suppose
$a\in\textrm{Ann}_A(I)$. Then $aa^*=0$, because $I$ is dense in $A$
and $ax=0$, $\forall x\in I$. Thus $a=0$ for $A$ is admissible. Then
$\alpha'$ is a norm by \ref{prop:wbijnorms}.   
 \end{proof}  

\begin{corollary}\label{cor:0}
Let $(E,A,\pr{\,}{\,}_E)$ be an admissible basic triple with $A$ a
Banach $*$-algebra and $E$ a faithful $A$-module. Suppose in 
addition that $E$ is an admissible $*$-tring such that $E_0^r$ is a
dense ideal of $A$ (recall
Corollary~\ref{cor:adfaith}). Then the
lattices $\msn(E)$ and 
$\msn_{cs}(A)$ are isomorphic, as well as the partially ordered sets 
$\mn(E)$ and $\mn_{cs}(A)$.    
\end{corollary}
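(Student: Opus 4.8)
The plan is to deduce Corollary~\ref{cor:0} directly from Theorem~\ref{thm:main0}, the only work being to check that in the present (Banach) setting both running hypotheses of that theorem are automatic. Concretely, I would argue that the Banach structure on $A$ forces every $C^*$-seminorm on $E_0^r$ to extend uniquely to $A$, and that admissibility of $A$ together with density of $E_0^r$ forces $\textrm{Ann}_A(E_0^r)=0$.

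First I would record the identification of $E_0^r$ inside $A$. Since $E$ is faithful as an $A$-module and admissible as a $*$-tring, Corollary~\ref{cor:adfaith} supplies a unique injective homomorphism $\psi:E_0^r\to A$ with $\psi(\pr{x}{y}_r)=\pr{x}{y}_E$ for all $x,y\in E$, whose image is $\textrm{span}\{\pr{x}{y}_E:x,y\in E\}$; by hypothesis this is a dense ideal of $A$, so we may regard $E_0^r$ as a dense $*$-ideal of $A$. Now the key point: because $A$ is a Banach $*$-algebra and $E_0^r$ is a dense $*$-ideal, Proposition~\ref{prop:extbanach} applies and shows that every $C^*$-seminorm on $E_0^r$ extends to a $C^*$-seminorm on $A$, the extension being unique by density. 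This is exactly the standing hypothesis of Theorem~\ref{thm:main0}, so that theorem yields an isomorphism of lattices $\msn(E)\cong\msn_{cs}^{\pr{\,}{\,}}(A)=\msn_{cs}(A)$, where the inner product is the given $\pr{\,}{\,}_E$.

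For the assertion about norms, I would verify the remaining hypothesis $\textrm{Ann}_A(E_0^r)=0$ of Theorem~\ref{thm:main0}, reusing the short argument from the proof of Proposition~\ref{prop:extbanach2}: if $a\in\textrm{Ann}_A(E_0^r)$, then the left-multiplication operator $L_a$ is continuous on the Banach algebra $A$ and vanishes on the dense set $E_0^r$, hence $aA=0$; in particular $aa^*=0$, and applying admissibility of $A$ to $a^*$ gives $a=0$. With $\textrm{Ann}_A(E_0^r)=0$ established, the second part of Theorem~\ref{thm:main0} delivers the isomorphism of partially ordered sets $\mn(E)\cong\mn_{cs}(A)$, completing the proof.

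Since all of the substantive content already lives in Theorem~\ref{thm:main0}, Proposition~\ref{prop:extbanach} and (the proof of) Proposition~\ref{prop:extbanach2}, I do not anticipate a genuine obstacle here. The only point worth a sentence of care is the identification of $\msn_{cs}(A)$ — defined through the given product $\pr{\,}{\,}_E$ — with $\msn_{cs}(E_0^r)$ — defined through $\pr{\,}{\,}_r$ — under the extension bijection $\msn(A)\cong\msn(E_0^r)$; this is immediate once one observes that $\psi(\pr{x}{y}_r)=\pr{x}{y}_E$, so the Cauchy–Schwarz condition transports verbatim between the two.
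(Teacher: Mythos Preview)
Your proposal is correct and follows exactly the route the paper takes: the paper's own proof simply says to combine Theorem~\ref{thm:main0} with Proposition~\ref{prop:extbanach} and Proposition~\ref{prop:extbanach2}, and you have spelled out precisely how these pieces fit together, including the annihilator argument lifted from the proof of Proposition~\ref{prop:extbanach2}. The only difference is one of detail, not of strategy.
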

\begin{proof}
Just combine Theorem~\ref{thm:main0} with
Proposition~\ref{prop:extbanach} and
Proposition~\ref{prop:extbanach2}. 
 \end{proof}
\begin{corollary}\label{cor:00}
Let $(E,A,\pr{\,}{\,}_A)$ and $(E,B,\pr{\,}{\,}_B)$ be respectively left
and right admissible basic triples, with $A$ and $B$ Banach
$*$-algebras such that $E$ 
is an $(A-B)$-bimodule with the given structure, and
$\pr{x}{y}_Az=x\pr{y}{z}_B$, $\forall x,y,z\in E$. If $E$ is faithful as
a left $A$-module and as a right $B$-module, and $E_0^l$ and $E_0^r$
are dense in $A$ and $B$ respectively, then there is an 
isomorphism of lattices between 
$\msn_{cs}^{\pr{}{}_A}(A)$
and
$\msn_{cs}^{\pr{}{}_B}(B)$,
that restricts to an isomorphism between the posets 
$\mn_{cs}^{\pr{}{}_A}(A)$
and
$\mn_{cs}^{\pr{}{}_B}(B)$.
\end{corollary}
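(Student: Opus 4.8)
The plan is to derive the statement from Corollary~\ref{cor:0}, applied once to the right admissible basic triple $(E,B,\pr{\,}{\,}_B)$ and once to the left admissible basic triple $(E,A,\pr{\,}{\,}_A)$, and then to compose the resulting isomorphisms, with the lattice $\msn(E)$ serving as a common pivot. The observation that makes the two applications land on the \emph{same} $\msn(E)$ is the compatibility relation $\pr{x}{y}_Az=x\pr{y}{z}_B$: it says exactly that the $*$-tring structure $\pr{\,}{\,}_A$ puts on $E$, namely $(x,y,z)=\pr{x}{y}_Az$, coincides with the one coming from $\pr{\,}{\,}_B$, namely $(x,y,z)=x\pr{y}{z}_B$. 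Hence there is a single $*$-tring $E$ underlying both triples, and thus a single $\msn(E)$ and a single $\mn(E)$ to work with.

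Before invoking Corollary~\ref{cor:0} I would verify its hypotheses on each side and record that its left-handed version is available: Theorem~\ref{thm:main0}, Proposition~\ref{prop:extbanach2} and Corollary~\ref{cor:0} dualize to left basic triples verbatim, just as the right-handed theory was dualized after Theorem~\ref{thm:semi*main}. The data at hand are essentially what is required: $A$ and $B$ are admissible Banach $*$-algebras; $E$ is faithful as a left $A$-module and as a right $B$-module; and $E_0^l$, $E_0^r$, identified with $\textrm{span}\{\pr{x}{y}_A\}$ and $\textrm{span}\{\pr{x}{y}_B\}$ through Corollary~\ref{cor:adfaith}, are dense ideals of $A$ and $B$, so that by the argument in Proposition~\ref{prop:extbanach2} every $C^*$-seminorm on $E_0^l$ (resp.\ $E_0^r$) extends uniquely to $A$ (resp.\ $B$) and $\textrm{Ann}_A(E_0^l)=\textrm{Ann}_B(E_0^r)=0$, which is what the norm parts of Corollary~\ref{cor:0} call for. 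The only hypothesis not literally on the list is that $E$ be admissible as a $*$-tring; I would get it by a short chase: from $(x,x,x)=0$ the computations of Proposition~\ref{prop:adfaith} give $\pr{x}{x}_A\pr{x}{x}_A=\pr{x}{x}_B^*\pr{x}{x}_B=0$, hence $\pr{x}{x}_A=\pr{x}{x}_B=0$ by admissibility of $A$ and $B$, and reinserting this yields $\pr{x}{y}_A=\pr{x}{y}_B=0$ for every $y\in E$, so $x$ annihilates every slot of the ternary product; the faithfulness and density hypotheses should then force $x=0$.

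Granting these checks, Corollary~\ref{cor:0} applied to $(E,B,\pr{\,}{\,}_B)$ produces a lattice isomorphism $\msn(E)\cong\msn_{cs}^{\pr{}{}_B}(B)$ restricting to a poset isomorphism $\mn(E)\cong\mn_{cs}^{\pr{}{}_B}(B)$, and the left-handed Corollary~\ref{cor:0} applied to $(E,A,\pr{\,}{\,}_A)$ produces $\msn(E)\cong\msn_{cs}^{\pr{}{}_A}(A)$ restricting to $\mn(E)\cong\mn_{cs}^{\pr{}{}_A}(A)$. Composing these two isomorphisms, and their restrictions to $C^*$-norms, gives the asserted isomorphism of lattices between $\msn_{cs}^{\pr{}{}_A}(A)$ and $\msn_{cs}^{\pr{}{}_B}(B)$ together with its restriction to a poset isomorphism between $\mn_{cs}^{\pr{}{}_A}(A)$ and $\mn_{cs}^{\pr{}{}_B}(B)$. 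I expect the main obstacle to be bookkeeping rather than anything deep: nailing down the last step of the admissibility chase for $E$ (admissibility of $E$ is not literally among the stated hypotheses, so one should check it really does follow, or else add it as a standing assumption), and confirming that the whole right-handed chain Theorem~\ref{thm:main0}--Corollary~\ref{cor:0} indeed carries over to left basic triples without any new input.
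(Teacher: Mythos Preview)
Your approach is correct and matches the paper's: the corollary is stated with no proof, as an immediate consequence of applying Corollary~\ref{cor:0} (and its left-handed analogue) on each side and composing through the common pivot $\msn(E)$, exactly as you outline. Your caution about admissibility of $E$ is well placed but ultimately moot: the very appearance of $E_0^l$ and $E_0^r$ in the statement presupposes that $E$ is an admissible $*$-tring (Theorem~\ref{thm:*level} and Corollary~\ref{cor:adfaith} only construct and identify these objects under that hypothesis), so it is a standing assumption rather than something to be derived.
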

\subsection{Positive modules}\label{subsec:pm} 
\par In general is not a simple task to decide if a given
$C^*$-seminorm satisfies the Cauchy-Schwarz property with respect to a
certain sesquilinear map. However this is always the case for the
positive modules we introduce next. 

\par Let $\al$ be a $C^*$-seminorm on the $*$-algebra $A$, and let 
$p_\al:A\to A_\al$ be the canonical map, where $A_\al$ is the
Hausdorff completion of $A$. If $\Lambda\subseteq\msn(A)$, then
$A_\Lambda^+:=\cap_{\alpha\in\Lambda}p_\alpha^{-1}(A_\alpha^+)$ is a
cone. When $\Lambda=\msn(A)$, we write $A^+$ instead of
$A_\Lambda^+$. Therefore $A^+$ is the set of elements of $A$ that are
positive in any $C^*$-Hausdorff completion of $A$. Of course the map
$\Lambda\mapsto A_\Lambda^+$ is order reversing.   
\begin{definition}\label{defn:pos*alg}
 Given $\Lambda\subseteq\msn(A)$, we say that $a\in A$ is positive in
 $(A,\Lambda)$, or that it is $\Lambda$-positive, if $a\in
 A_\Lambda^+$. The elements of $A^+$ are just called the positive 
 elements of~$A$.    
\end{definition}
\par It is clear that $A^+$ contains the cone
$C_A:=\{\sum_{i,j=1}^na_i^*a_j:\, n\in\nt, a_i\in A, i=1,\ldots n\}$,
and that $p_\al(C_A)$ is dense in $A_\al^+$, $\forall
\al\in\msn(A)$. Also note that if $\phi:A\to B$ is a homomorphism
between $*$-algebras, then $\phi(A^+)\subseteq B^+$ and
$\phi(C_A)\subseteq C_B$.    
\par If $\msn(A)$ is bounded, with $\alpha:=\max\msn(A)$, then $a$ is
positive in $A$ if and only if $a$ is positive in $(A,\alpha)$. In
particular, if $A$ is a Banach $*$-algebra, then $a\in A^+$ if and
only if $\iota(a)\in C^*(A)^+$, where $\iota:A\to C^*(A)$ is the
natural map of $A$ into its $C^*$-enveloping algebra $C^*(A)$.  

\begin{lemma}\label{lem:pos}
Let $A$ be $C^*$-closable. Then
$A^+=\bigcap\{p_\al^{-1}(A_\al^+):\,\al\in\mn(A)\}$.  
\end{lemma}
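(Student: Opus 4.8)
The inclusion $A^+\subseteq\bigcap\{p_\al^{-1}(A_\al^+):\al\in\mn(A)\}$ is trivial, since $\mn(A)\subseteq\msn(A)$ and $A^+$ is by definition the intersection of the sets $p_\al^{-1}(A_\al^+)$ over \emph{all} of $\msn(A)$. So the real content is the reverse inclusion, and the plan is to show that each constraint ``$p_\beta(a)\geq 0$'' coming from a seminorm $\beta\in\msn(A)$ is already forced by a single constraint coming from a genuine $C^*$-norm. The key elementary observation I would record first is that $\msn(A)$ is stable under pointwise maxima: if $\gamma_1,\gamma_2\in\msn(A)$ and $\gamma:=\max\{\gamma_1,\gamma_2\}$, then the two conditions of Definition~\ref{defn:cnorm} pass to $\gamma$ by the one-line estimates $\gamma(\mu(x,y,z))\leq\max_i\gamma_i(x)\gamma_i(y)\gamma_i(z)\leq\gamma(x)\gamma(y)\gamma(z)$ and $\gamma(\mu(x,x,x))=\max_i\gamma_i(x)^3=\gamma(x)^3$. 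Given an arbitrary $\beta\in\msn(A)$, I would then use the hypothesis of $C^*$-closability to pick some $\al_0\in\mn(A)$ and set $\al:=\max\{\beta,\al_0\}$: this $\al$ lies in $\msn(A)$ by the observation, and it is a norm because it dominates the norm $\al_0$, so $\al\in\mn(A)$.

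Next, since $\beta\leq\al$ we have $N_\al\subseteq N_\beta$, so the identity of $A$ induces a contractive $*$-homomorphism $A/N_\al\to A/N_\beta$ which extends by continuity to a $*$-homomorphism $\phi\colon A_\al\to A_\beta$ between the $C^*$-completions, satisfying $\phi\circ p_\al=p_\beta$. If now $a$ lies in $\bigcap\{p_\gamma^{-1}(A_\gamma^+):\gamma\in\mn(A)\}$, then in particular $p_\al(a)\in A_\al^+$, and since $*$-homomorphisms of $C^*$-algebras carry positive elements to positive elements we get $p_\beta(a)=\phi(p_\al(a))\in A_\beta^+$. As $\beta\in\msn(A)$ was arbitrary, this gives $a\in A^+$, which is the reverse inclusion. (Equivalently and more compactly: the relation $\beta\leq\al$ already forces $p_\al^{-1}(A_\al^+)\subseteq p_\beta^{-1}(A_\beta^+)$, so intersecting only over $\mn(A)$ reproduces every set $p_\beta^{-1}(A_\beta^+)$ with $\beta\in\msn(A)$.)

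I do not expect any serious obstacle here; the two ingredients are the stability of $\msn(A)$ under pointwise maxima, which is immediate from Definition~\ref{defn:cnorm}, and the standard fact that $*$-homomorphisms between $C^*$-algebras preserve positivity. The only hypothesis genuinely used is $C^*$-closability, which is precisely what supplies the norm $\al_0$ needed to upgrade an arbitrary $C^*$-seminorm $\beta$ to a $C^*$-norm $\al$ dominating it; without some such norm around there would be nothing forcing the seminorm constraints.
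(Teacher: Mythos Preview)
Your proof is correct and follows essentially the same approach as the paper's: both use that the pointwise maximum of two $C^*$-seminorms is again a $C^*$-seminorm to upgrade an arbitrary $\beta\in\msn(A)$ to a $C^*$-norm $\al\geq\beta$, and then push positivity through the induced $*$-homomorphism $A_\al\to A_\beta$. You have simply spelled out the two ingredients (stability of $\msn(A)$ under maxima and positivity of $*$-homomorphisms) in more detail than the paper does.
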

\begin{proof}
Clearly we have that
$A^+\subseteq \bigcap\{p_\al^{-1}(A_\al^+):\,\al\in\mn(A)\}$. Let
$\beta\in\msn(A)$. Since the maximum of two $C^*$-seminorms is again a
$C^*$-seminorm, and since $A$ is $C^*$-closable, we may pick
$\beta'\in\mn(A)$ such 
that $\beta'\geq\beta$. Then the identity map on $A$ induces 
a \hm $\phi:A_{\beta'}\to A_\beta$, determined by $\phi(p_{\beta'}(a))=p_\beta(a)$,
$\forall a\in A$. If $a\in
\bigcap\{p_{\al}^{-1}(A_{\al}^+):\,\al\in\mn(A)\}$ then $p_{\beta'}(a)\in
A_{\beta'}^+$, and therefore $p_\beta(a)\in A_\beta^+$. This 
proves the converse inclusion.   
 \end{proof}
Once  we have a cone of positive elements on a $*$-algebra $A$, we are
able to define a notion similar to that of Hilbert module. 
\begin{definition}\label{defn:positivemodule}
Let $A$ be a $*$-algebra, $E$ a right $A$-module, and   
$\Lambda\subseteq\msn(A)$. We say that a map $\pr{\cdot}{\cdot}: E\times
E\to  A$ is a $\Lambda$-semi-pre-inner product on $E$ if:
\be
\item $\pr{x}{\la_1 y+\la_2 z}=\la_1\pr{x}{y}+\la_2\pr{x}{z}$, $\forall
      x,y,z\in E$, $\la_1 ,\la_2\in \C$.
\item $\pr{x}{ya}=\pr{x}{y}a$, $\forall x,y\in E$, $a\in A$.
\item $\pr{y}{x}=\pr{x}{y}^*$, $\forall x,y\in E$.
\item $\pr{x}{x}$ is $\Lambda$-positive,  $\forall x\in E$.  
\ee
The pair $(E,\pr{\,}{\,})$ is then called a right positive
$\Lambda$-module. In case $\Lambda=\msn(A)$ we say that
$(E,\pr{\,}{\,})$ is a right positive $A$-module. 
\par Similarly we define left semi-pre-inner-products
and left positive modules. 
\end{definition}  
\begin{definition}\label{defn:+*}
An admissible $*$-tring $E$ is right (left) positive if $(E,\pr{\,}{\,}_r)$ 
is a positive $E_0^r$-module (respectively: $(E,\pr{\,}{\,}_l)$ 
is a positive $E_0^l$-module). It is said positive if it is both left 
and right positive.      
\end{definition}
\par Observe that if $E$ is a $C^*$-tring, which is positive as an 
admissible $*$-tring, then it is obviously a positive
$C^*$-tring. Conversely, it is readily checked that any positive
$C^*$-tring is a positive admissible $*$-tring. 

\begin{proposition}\label{prop:tilde}
Let $(A,\alpha)$ be a $C^*$-seminormed algebra and $(E,\pr{\,}{\,})$ a
right positive $(A,\alpha)$-module. Let $\tilde{\al}:E\to [0,\infty)$
  be given by 
$\tilde{\al}(x)=\sqrt{\al(\pr{x}{x})}$, $\forall x\in E$. Consider $E$
as a $*$-tring with $(x,y,z):=x\pr{y}{z}$, $\forall x,y,z\in E$. Then:  
\begin{enumerate}
 \item We have $\alpha(a)\leq\alpha(b)$ whenever $a$ and $b-a$ are
   positive elements of $A$. 
 \item $\tilde{\al}(x)^2\pr{y}{y}-\pr{x}{y}^*\pr{x}{y}$ is positive in
       $(A,\al)$, and $\alpha(\pr{x}{y})\leq
   \tilde{\alpha}(x)\tilde{\alpha}(y)$, $\forall x,y\in E$
   (Cauchy-Schwarz).    
 \item $\alpha(\pr{x}{x})a^*a-a^*\pr{x}{x}a\geq 0$, $\forall x\in E$, 
   $a\in A$.  
 \item $\tilde{\al}(xa)\leq\tilde{\al}(x)\al(a)$, $\forall x\in E$,
   $a\in A$.   
 \item $\tilde{\al}\in\msn(E)$. 
\end{enumerate}
\end{proposition}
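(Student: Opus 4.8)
The plan is to transport everything into the $C^*$-algebra $B:=A_\alpha$, the Hausdorff completion of $(A,\alpha)$, via the canonical map $p:=p_\alpha:A\to B$, passing to the unitization $\tilde{B}$ whenever a unit is needed. By hypothesis $p(\pr{x}{x})\in B^+$ for every $x\in E$, and by definition $\tilde{\alpha}(x)^2=\alpha(\pr{x}{x})=\norm{p(\pr{x}{x})}$. Under this dictionary, assertion (1) is just the elementary $C^*$-fact $0\le u\le v\Rightarrow\norm{u}\le\norm{v}$: if $a$ and $b-a$ are positive in $(A,\alpha)$, then $0\le p(a)\le p(b)$ in $B$, hence $\alpha(a)\le\alpha(b)$.

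The heart of the proof is (2), the operator-valued Cauchy--Schwarz inequality. I would start from the basic expansion: for any $c\in A$ the element $z:=y-xc$ lies in $E$, so $\pr{z}{z}$ is positive in $(A,\alpha)$, and sesquilinearity together with $\pr{u}{va}=\pr{u}{v}a$ and $\pr{v}{u}=\pr{u}{v}^*$ give $\pr{z}{z}=\pr{y}{y}-\pr{y}{x}c-c^*\pr{x}{y}+c^*\pr{x}{x}c$. Taking $c:=\lambda^{-1}\pr{x}{y}$ with $\lambda>0$ and mapping to $B$, one obtains
\[
p(\pr{y}{y})-\tfrac{2}{\lambda}\,p(\pr{x}{y})^*p(\pr{x}{y})+\tfrac{1}{\lambda^{2}}\,p(\pr{x}{y})^*p(\pr{x}{x})p(\pr{x}{y})\ \ge\ 0 .
\]
Then I would dominate the quadratic term via $p(\pr{x}{x})\le\tilde{\alpha}(x)^2\,1$ in $\tilde{B}$ (conjugating this inequality by $p(\pr{x}{y})$), and choose $\lambda:=\tilde{\alpha}(x)^2$ to optimise the resulting scalar factor, arriving at $p(\pr{x}{y})^*p(\pr{x}{y})\le\tilde{\alpha}(x)^2\,p(\pr{y}{y})$ in $B$; this is exactly the positivity asserted in (2) (when $\tilde{\alpha}(x)=0$ the quadratic term disappears and letting $\lambda\to0^+$ forces $p(\pr{x}{y})^*p(\pr{x}{y})=0$). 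Taking $C^*$-norms and invoking (1) then yields $\alpha(\pr{x}{y})^2\le\tilde{\alpha}(x)^2\tilde{\alpha}(y)^2$, the scalar Cauchy--Schwarz estimate.

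Assertions (3)--(5) are then short. For (3), conjugating $p(\pr{x}{x})\le\tilde{\alpha}(x)^2\,1$ by $p(a)$ in $\tilde{B}$ gives $p(a^*\pr{x}{x}a)\le\tilde{\alpha}(x)^2 p(a^*a)$, i.e.\ $\alpha(\pr{x}{x})a^*a-a^*\pr{x}{x}a$ is positive in $(A,\alpha)$. For (4), $\tilde{\alpha}(xa)^2=\alpha(\pr{xa}{xa})=\alpha(a^*\pr{x}{x}a)=\norm{p(a)^*p(\pr{x}{x})p(a)}\le\norm{p(\pr{x}{x})}\norm{p(a)}^2=\tilde{\alpha}(x)^2\alpha(a)^2$. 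Finally, the scalar Cauchy--Schwarz estimate from (2) says precisely that $\alpha\in\msn_{cs}^{\pr{}{}}(A)$ for the basic triple $(E,A,\pr{\,}{\,})$, so part 2 of Proposition~\ref{prop:semi*level2a} gives $\tilde{\alpha}\in\msn(E)$; alternatively one re-derives directly the triangle inequality from (2), the ternary contractivity $\tilde{\alpha}((x,y,z))\le\tilde{\alpha}(x)\tilde{\alpha}(y)\tilde{\alpha}(z)$ from (4) and (2), and the identity $\tilde{\alpha}((x,x,x))=\tilde{\alpha}(x)^3$ from $\alpha(\pr{x}{x}^{3})=\alpha(\pr{x}{x})^{3}$.

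The main obstacle is (2): one must carry out the completing-the-square manipulation at the level of $A$ before transporting it to $B$, using only that $\pr{z}{z}$ is $\{\alpha\}$-positive for $z\in E$, and the bound $p(\pr{x}{x})\le\tilde{\alpha}(x)^2\,1$ genuinely requires the unitization $\tilde{B}$ — although, since $p(\pr{x}{y})^*p(\pr{x}{x})p(\pr{x}{y})$ already lies in $B$, the final inequality lands back in $B$. All remaining steps are routine $C^*$-algebraic bookkeeping.
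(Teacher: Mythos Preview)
Your proposal is correct and follows essentially the same route as the paper: both transport the problem to the Hausdorff completion $A_\alpha$ via $p_\alpha$ and deduce everything from standard $C^*$-algebra positivity facts. The only cosmetic differences are that the paper packages the images $p_\alpha(\pr{x}{y})$ into a quotient pre-Hilbert module $(E/F,[\,,\,])$ and then cites Lance's Proposition~1.1 for the operator Cauchy--Schwarz inequality rather than writing out your completing-the-square argument, and for (5) the paper re-derives the $C^*$-seminorm axioms directly instead of invoking Proposition~\ref{prop:semi*level2a} (your shortcut via that proposition is perfectly legitimate once (2) is in hand).
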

\begin{proof}
Let $p_\al:A\to A/I_\al=:A_\alpha$ be the natural map, where
$I_\al$ is the ideal $I_\alpha:=\{a\in A:\,\al(a)=0\}$, and let
$\bar{\al}$ be the quotient 
norm on $A_\alpha$. Now let $F:=\gen\{xb\in E:\,x\in E, b\in
I_\al\}$. Then $EI_\alpha\subseteq F$, so $E/F$ is an
$A/I_\al$-module. Moreover, $\pr{E}{F}\subseteq I_\alpha$ and
$\pr{F}{E}\subseteq I_\alpha$, so we can consider the map 
$[\,,\,]:E/F\times E/F\to A/I_\al$ given by 
$[q(x),q(y)]=p_\al(\pr{x}{y})$, which satisfies properties 1.--4. of
Definition~\ref{defn:positivemodule} above. If $a$ and $b-a$ are
positive in $A$, then $0\leq p_\alpha(a)\leq p_\alpha(b)$ in
$A_\alpha$, and therefore $\bar{\alpha}(p_\alpha(a))\leq
\bar{\alpha}(p_\alpha(b))$, that is $\alpha(a)\leq\alpha(b)$. This
proves 1. Now, the first part of the second statement follows from the 
proof of \cite[Proposition 1.1]{l}, since
$p_\al(\tilde{\al}(x)^2\pr{y}{y}-\pr{y}{x}\pr{x}{y})=\bar{\al}([q(x),q(x)])[q(y),q(y)]  
-[q(y),q(x)]\,[q(x),q(y)]$ in $A_\al$. The second part of~2. follows
from the first one and from 1. To see 3. just observe that
by applying $p_\alpha$ to the element
$\alpha(\pr{x}{x})a^*a-a^*\pr{x}{x}a$ of $A$ we get the positive
element   
$\bar{\alpha}([x,x])p_\alpha(a)^*p_\alpha(a)-p_\alpha(a)^*[x,x]p_\alpha(a)$
of $A_\alpha$. Assertion 4. easily follows from 1. and 3: by
3. we have $a^*\pr{x}{x}a\leq \tilde{\alpha}(x)^2a^*a$, then 
$\tilde{\alpha}(xa)^2=\alpha(\pr{xa}{xa})=\alpha(a^*\pr{x}{x}a)$, and
by 1. this is less or equal to
$\alpha(\tilde{\alpha}(x)^2a^*a)=\tilde{\alpha}(x)^2\alpha(a)^2$.    
It is clear that $\tilde{\al}(\la x)=|\la|\tilde{\al}(x)$, $\forall
x\in E$, $\la\in\C$, and from the Cauchy-Schwarz inequality just
proved it readily follows that 
$\tilde{\al}$ also satisfies the triangle inequality, so it is a
seminorm on $E$.   
Now, if $x,y,z\in E$:
$\tilde{\al}(x\pr{y}{z})^2
=\al(\pr{y}{z}^*\pr{x}{x}\pr{y}{z}).$ 
Thus, in the case $x=y=z$: 
\begin{equation*}
  \tilde{\al}((x,x,x))
    =\al(\pr{x}{x}^3)^{1/2}
    =\al(\pr{x}{x}^{1/2})^3
    =\tilde{\al}(x)^3.
\end{equation*} 
According to 3. we have $\pr{y}{z}^*\pr{x}{x}\pr{y}{z}\leq
\al(\pr{x}{x})\pr{y}{z}^*\pr{y}{z}$ in $(A,\al)$. From this fact,
together with 4. and the Cauchy-Schwarz inequality we conclude that    
\begin{equation*}
  \tilde{\al}(x\pr{y}{z})^2
    \leq \tilde{\al}(x)^2\al(\pr{y}{z})^2
    \leq (\tilde{\al}(x)\,\tilde{\al}(y)\,\tilde{\al}(z))^2
\end{equation*} 
so $\tilde{\al}$ is a $C^*$-seminorm on $E$.    
 \end{proof}
\begin{corollary}\label{cor:possc}
If $E$ is a right positive $*$-tring, then
$\msn_{cs}(E_0^r)=\msn(E_0^r)$,  and $\msn(E)\cong\msn(E_0^r)$ and
$\mn(E)\cong\mn(E_0^r)$ as ordered sets. 
\end{corollary}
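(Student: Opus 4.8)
The plan is to deduce everything from Theorem~\ref{thm:semi*main} and Proposition~\ref{prop:tilde}; the only genuine content is the equality $\msn_{cs}(E_0^r)=\msn(E_0^r)$. The inclusion $\msn_{cs}(E_0^r)\subseteq\msn(E_0^r)$ holds by definition, so I would concentrate on the reverse one. Fix an arbitrary $\alpha\in\msn(E_0^r)$. Since $E$ is a right positive $*$-tring, $(E,\pr{\,}{\,}_r)$ is a positive $E_0^r$-module (Definition~\ref{defn:+*}), i.e.\ $\pr{x}{x}_r$ is $\msn(E_0^r)$-positive for every $x\in E$; in particular it is $\alpha$-positive, so $(E,\pr{\,}{\,}_r)$ is a right positive $(E_0^r,\alpha)$-module in the sense of Definition~\ref{defn:positivemodule}. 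Hence part~2 of Proposition~\ref{prop:tilde}, applied with $A=E_0^r$ and this $\alpha$, yields the Cauchy-Schwarz inequality $\alpha(\pr{x}{y}_r)\leq\tilde{\alpha}(x)\tilde{\alpha}(y)$, that is $\alpha(\pr{x}{y}_r)^2\leq\alpha(\pr{x}{x}_r)\,\alpha(\pr{y}{y}_r)$ for all $x,y\in E$. Thus $\alpha\in\msn_{cs}(E_0^r)$, and the two lattices coincide.

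Once this is established, the remaining assertions are immediate. Theorem~\ref{thm:semi*main} provides mutually inverse order isomorphisms $\Phi_r:\msn(E)\to\msn_{cs}(E_0^r)$ and $\Psi_r:\msn_{cs}(E_0^r)\to\msn(E)$, so substituting $\msn_{cs}(E_0^r)=\msn(E_0^r)$ gives $\msn(E)\cong\msn(E_0^r)$ as ordered sets. For norms, from $\mn_{cs}(E_0^r)=\msn_{cs}(E_0^r)\cap\mn(E_0^r)=\msn(E_0^r)\cap\mn(E_0^r)=\mn(E_0^r)$ together with $\Phi_r(\mn(E))=\mn_{cs}(E_0^r)$ (again Theorem~\ref{thm:semi*main}), we conclude that $\Phi_r$ and $\Psi_r$ restrict to mutually inverse order isomorphisms between $\mn(E)$ and $\mn(E_0^r)$.

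There is essentially no obstacle here: the argument is a one-line reduction. The only point worth a moment's care is bookkeeping with the notions of positivity -- checking that ``right positive $*$-tring'' (Definition~\ref{defn:+*}), which refers to positivity relative to $\Lambda=\msn(E_0^r)$, really supplies the hypothesis of Proposition~\ref{prop:tilde} for each individual $C^*$-seminorm $\alpha\in\msn(E_0^r)$. Since $\msn(E_0^r)$-positivity of $\pr{x}{x}_r$ means $p_\beta(\pr{x}{x}_r)\in(E_0^r)_\beta^+$ for every $\beta\in\msn(E_0^r)$, it trivially specializes to $\beta=\alpha$, so this causes no trouble; note that no assumption that $E_0^r$ be $C^*$-closable is needed, and the degenerate case in which the lattices are empty is covered automatically.
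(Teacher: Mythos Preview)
Your proposal is correct and is precisely the argument the paper has in mind: the corollary is stated immediately after Proposition~\ref{prop:tilde} without proof because it follows exactly as you describe—Proposition~\ref{prop:tilde}(2) forces every $\alpha\in\msn(E_0^r)$ to satisfy Cauchy--Schwarz, and then Theorem~\ref{thm:semi*main} does the rest. Your bookkeeping remark about specializing $\msn(E_0^r)$-positivity to a single $\alpha$ is the only thing one needs to check, and you handle it correctly.
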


\begin{proposition}
Let $E$ be an admissible $*$-tring and $\gamma\in\msn(E)$. If $E$ is a
right positive $(E_0^r,\gamma^r)$-module, then $E$ is also a left
positive $(E_0^l,\gamma^l)$-module. Therefore $E$ is right positive if
and only if is left positive.     
\end{proposition}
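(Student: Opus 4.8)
The plan is to fix $\gamma\in\msn(E)$, transport everything into the Hausdorff completion $E_\gamma$ (which is a genuine \ct), and observe that there \emph{both} the hypothesis and the conclusion say precisely that $E_\gamma$ is a positive \ct; the fact that positivity of a \ct is a two-sided notion will then drop out of Zettl's fundamental decomposition, and letting $\gamma$ range over $\msn(E)$ gives the final ``if and only if''.

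To make the reduction precise, let $q:E\to E_\gamma$ be the canonical map. By Proposition~\ref{prop:quotzettl} (see also Corollary~\ref{cor:cbt}) the \cs $E_\gamma^r$ is the completion of $(E_0^r,\gamma^r)$ and $\pr{q(x)}{q(x)}_r$ is the image there of $\pr{x}{x}_r$; moreover $\norm{z}^2=\norm{\pr{z}{z}_r}$ in $E_\gamma$ (recall \eqref{eq:phi.inj}). Hence ``$E$ is a right positive $(E_0^r,\gamma^r)$-module'' says exactly that $\pr{q(x)}{q(x)}_r\ge 0$ in $E_\gamma^r$ for every $x\in E$. Since $q(E)$ is dense in $E_\gamma$, since $z\mapsto\pr{z}{z}_r$ is norm-continuous from $E_\gamma$ into $E_\gamma^r$ (by the Cauchy-Schwarz bound $\norm{\pr{u}{v}_r}\le\norm{u}\norm{v}$), and since the positive cone of $E_\gamma^r$ is closed, this is equivalent to $\pr{z}{z}_r\ge 0$ for \emph{every} $z\in E_\gamma$, i.e.\ to $E_\gamma$ being a positive \ct. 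Word for word on the left side, ``$E$ is a left positive $(E_0^l,\gamma^l)$-module'' is equivalent to $\pr{z}{z}_l\ge 0$ for every $z\in E_\gamma$. So the first assertion reduces to the following claim about an arbitrary \ct $F$: one has $\pr{z}{z}_r\ge0$ for all $z\in F$ \emph{if and only if} $\pr{z}{z}_l\ge0$ for all $z\in F$.

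To prove this claim I would invoke the fundamental decomposition $F=F_+\oplus F_-$: here $F_\pm$ are ideals, $F_+$ is isomorphic to a TRO, $F_-$ to an anti-TRO, and $\pr{F_+}{F_-}_r=0$. From $(F_+,F,F)\subseteq F_+$, $(F,F,F_-)\subseteq F_-$ and $F_+\cap F_-=\{0\}$ one also gets $\pr{F_+}{F_-}_l=0$, so $F^r=F_+^r\oplus F_-^r$ and $F^l=F_+^l\oplus F_-^l$ as orthogonal direct sums of \css. For $z=z_++z_-$ this gives $\pr{z}{z}_r=\pr{z_+}{z_+}_r\oplus\pr{z_-}{z_-}_r$ with $\pr{z_+}{z_+}_r\ge0$ (TRO picture) and $\pr{z_-}{z_-}_r\le0$ (anti-TRO picture); since $\norm{z_-}^2=\norm{\pr{z_-}{z_-}_r}$, positivity of all the $\pr{z}{z}_r$ is equivalent to $F_-=\{0\}$. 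The identical computation with $\pr{\,}{\,}_l$ shows positivity of all the $\pr{z}{z}_l$ is \emph{also} equivalent to $F_-=\{0\}$, which settles the claim.

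For the final equivalence: if $E$ is right positive in the sense of Definition~\ref{defn:+*}, then since $\gamma^r\in\msn(E_0^r)$ it is a right positive $(E_0^r,\gamma^r)$-module for every $\gamma\in\msn(E)$, hence by the above a left positive $(E_0^l,\gamma^l)$-module for every $\gamma$; equivalently $\pr{x}{x}_l$ is $\alpha$-positive for every $\alpha\in\{\gamma^l:\gamma\in\msn(E)\}=\msn_{cs}(E_0^l)$ (left analogue of Theorem~\ref{thm:semi*main}). To upgrade this to positivity against \emph{all} of $\msn(E_0^l)$ I would use Corollary~\ref{cor:possc}: right positivity forces $\msn(E_0^r)=\msn_{cs}(E_0^r)$, and feeding this through Corollary~\ref{cor:semi*main} together with the symmetry of the whole setup under passing to the reverse $*$-tring $E^*$ gives $\msn(E_0^l)=\msn_{cs}(E_0^l)$ as well, so the two conditions coincide and $E$ is left positive; the converse is symmetric. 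The hard part is the \ct claim of the third paragraph: the fundamental decomposition is manufactured from the \emph{right} inner product, and one must check that the \emph{left} inner product detects exactly the same splitting — this is precisely where the TRO/anti-TRO description of $F_\pm$ is indispensable, as it forces $\pr{z_\pm}{z_\pm}_l$ to carry the same sign as $\pr{z_\pm}{z_\pm}_r$. A minor further point is the $\msn_{cs}$-to-$\msn$ passage just described, for which Corollary~\ref{cor:possc} is the tool.
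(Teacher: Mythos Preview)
Your approach matches the paper's: pass to the Hausdorff completion $E_\gamma$, observe it is a positive \ct, and read off left positivity there. The paper's proof is three lines (``$E_\gamma$ is a right Hilbert $E_\gamma^r$-module, hence a positive \ct, hence a left Hilbert $E_\gamma^l$-module''); your density/continuity step and your fundamental-decomposition argument simply unpack what the paper takes for granted, and they are correct.

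Where your argument falters is the final paragraph. To pass from ``left positive with respect to every $\gamma^l$'' (i.e.\ every seminorm in $\msn_{cs}(E_0^l)$) to the full Definition~\ref{defn:+*} (positivity for every $\beta\in\msn(E_0^l)$), you propose to apply Corollary~\ref{cor:possc} on the right, then invoke Corollary~\ref{cor:semi*main} and the reverse tring $E^*$ to conclude $\msn(E_0^l)=\msn_{cs}(E_0^l)$. But $(E^*)_0^r=E_0^l$ with $\pr{x}{x}_{E^*,r}=\pr{x}{x}_{E,l}$, so ``$E^*$ right positive'' is \emph{exactly} ``$E$ left positive''; applying Corollary~\ref{cor:possc} to $E^*$ therefore assumes the conclusion. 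And Corollary~\ref{cor:semi*main} only relates the two Cauchy--Schwarz families to each other, not either of them to the full $\msn$. The paper itself does not address this step---its proof establishes the first sentence and treats the ``therefore'' as self-evident---so you have correctly located a point the paper elides, but your proposed patch is circular rather than a fix.
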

\begin{proof}
Let $E_\gamma$ be the Hausdorff completion of $(E,\gamma)$. Since
$E_\gamma$ is a right Hilbert module over $E_{\gamma}^r$, it 
turns out that $E_\gamma$ is a positive $C^*$-tring, and therefore a
left Hilbert module over $E_\gamma^l$, so $E$ is a left
positive $(E_0^l,\gamma^l)$-module.           
 \end{proof}
\begin{proposition}\label{prop:typ}
Let $B$ be an admissible Banach $*$-algebra and suppose $E$ is a right
closed ideal of $B$ such that $\gen\{x^*y:\, x,y\in E\}$ is dense in $B$. Let 
$A$ be the closure in $B$ of $\gen\{xy^*:\, x,y\in E\}$. If $xx^*$ is
positive in $A$, $\forall x\in E$, then the restriction
map $\varphi:\msn(B)\to\msn(A)$, $\beta\mapsto\beta|_A$, is a lattice   
isomorphism such that $\varphi(\mn(B))=\mn(A)$, and for each
$\beta\in\msn(B)$ the Hausdorff completion $B_\beta$ of $B$ is
Morita-Rieffel equivalent to the Hausdorff completion
$A_{\varphi(\beta)}$ of $A$. In particular,   
the corresponding enveloping $C^*$-algebras $C^*(B)$ and $C^*(A)$ of
$B$ and $A$ are Morita-Rieffel equivalent $C^*$-algebras.  
\end{proposition}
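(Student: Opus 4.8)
The plan is to put the ternary product $(x,y,z):=xy^{*}z$ on $E$, recognise $E$ as a positive admissible $*$-tring sitting between a right basic triple over $B$ and a left basic triple over $A$, and then transport the conclusion through the correspondence theory of Section~3 together with Corollary~\ref{cor:preeq}.

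With $(x,y,z):=xy^{*}z$, $E$ is a closed sub-$*$-tring of $B$ (viewed as a $*$-tring via $ab^{*}c$), hence admissible because $B$ is. Since $E$ is a right ideal, $(E,B,\pr{\,}{\,}_{B})$ with $\pr{x}{y}_{B}:=x^{*}y$ is a right basic triple whose associated $*$-tring is the one just defined. Because $\gen\{xy^{*}\}\,E\subseteq E$ and $E$ is closed, $AE\subseteq E$, so $E$ is a left $A$-module and $(E,A,\pr{\,}{\,}_{A})$ with $\pr{x}{y}_{A}:=xy^{*}$ is a left basic triple inducing the same $*$-tring, with $\pr{x}{y}_{A}z=x\pr{y}{z}_{B}$. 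I would then check that $E$ is faithful as a right $B$-module and as a left $A$-module (if $xb=0$ for all $x\in E$, then $\gen\{x^{*}y\}\,b=0$, so $Bb=0$ by density, so $b^{*}b=0$ and $b=0$; symmetrically on the left, using that $A$, a $*$-subalgebra of the admissible algebra $B$, is admissible). The sets $I:=\gen\{\pr{x}{y}_{B}\}$ and $J:=\gen\{\pr{x}{y}_{A}\}$ are self-adjoint one-sided ideals of $B$ and of $A$, hence two-sided $*$-ideals, and both are dense; so by Corollary~\ref{cor:adfaith} and its left analogue, $E_{0}^{r}\cong I$ and $E_{0}^{l}\cong J$, compatibly with the inner products.

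Now $E$ is even a \emph{positive} $*$-tring: it is right positive because $\pr{x}{x}_{r}$ corresponds to $x^{*}x\in C_{B}\subseteq B^{+}$, hence is positive in the dense ideal $E_{0}^{r}\cong I$; and it is left positive precisely by the hypothesis of the proposition, since $\pr{x}{x}_{l}$ corresponds to $xx^{*}$, positive in $A$ and hence in $E_{0}^{l}\cong J$. Therefore Corollary~\ref{cor:possc} and its left analogue give lattice isomorphisms $\msn(E)\cong\msn(E_{0}^{r})\cong\msn(I)$ and $\msn(E)\cong\msn(E_{0}^{l})\cong\msn(J)$, restricting to bijections between the corresponding sets of $C^{*}$-norms. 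Since $I$ and $J$ are dense $*$-ideals of the admissible Banach $*$-algebras $B$ and $A$, Propositions~\ref{prop:extbanach} and \ref{prop:extbanach2} identify $\msn(I)$ with $\msn(B)$ and $\msn(J)$ with $\msn(A)$ (and likewise for the $C^{*}$-norms) by unique extension. Composing all of this produces a lattice isomorphism $\Phi\colon\msn(B)\to\msn(A)$ carrying $\mn(B)$ onto $\mn(A)$.

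It remains to identify $\Phi$ with the restriction map $\varphi$ and to extract the Morita--Rieffel equivalences. Unwinding the construction, $\Phi(\beta)$ is the unique $C^{*}$-seminorm on $A$ whose restriction to $J$ equals $\gamma^{l}|_{J}$, where $\gamma:=\widetilde{\beta}\in\msn(E)$ satisfies $\gamma(x)=\beta(x^{*}x)^{1/2}$ and $\gamma^{l}$ is the left operator seminorm on $E_{0}^{l}\cong J$; moreover $\gamma^{r}$ is just the restriction of $\beta$ to $E_{0}^{r}\cong I$, so the Hausdorff completion $E_{\gamma}^{r}$ of $(E_{0}^{r},\gamma^{r})$ is $B_{\beta}$. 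Passing to $B_{\beta}$ with canonical map $p$, one has for $a\in J$
\[
  \gamma^{l}(a)=\sup\{\norm{p(a)p(x)}:x\in E,\ \norm{p(x)}\le 1\}.
\]
The closed right ideal $L:=\overline{p(E)}\subseteq B_{\beta}$ is a \emph{full} right Hilbert $B_{\beta}$-module (fullness being exactly the density of $\gen\{x^{*}y\}$ in $B$), its algebra of generalised compact operators is $\overline{\gen\{p(x)p(y)^{*}\}}=\overline{p(A)}=A_{\beta|_{A}}$, and this algebra acts on $L$ by left multiplication in $B_{\beta}$; since that action is isometric and, by density, the supremum above may be taken over the whole unit ball of $L$, it follows that $\gamma^{l}(a)=\norm{p(a)}_{B_{\beta}}=\beta(a)$ for all $a\in J$, i.e.\ $\gamma^{l}|_{J}=\beta|_{J}$. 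As $\Phi(\beta)$ and $\beta|_{A}$ are both $C^{*}$-seminorms on $A$ extending $\beta|_{J}$ from the dense $*$-ideal $J$, the uniqueness of such extensions (Proposition~\ref{prop:extbanach}) gives $\Phi(\beta)=\beta|_{A}=\varphi(\beta)$; hence $\varphi$ is a lattice isomorphism with $\varphi(\mn(B))=\mn(A)$. Finally, $B_{\beta}=E_{\gamma}^{r}$ and $A_{\varphi(\beta)}=A_{\beta|_{A}}=E_{\gamma}^{l}$, so Corollary~\ref{cor:preeq} shows $B_{\beta}$ and $A_{\varphi(\beta)}$ are \meq; applying this to the maximal element of $\msn(B)$, whose image under $\varphi$ is the maximal element of $\msn(A)$, shows that $C^{*}(B)$ and $C^{*}(A)$ are \meq. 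I expect the main obstacle to be the Hilbert-module computation of this last paragraph --- that is, verifying that the abstract lattice isomorphism is honestly the restriction of seminorms.
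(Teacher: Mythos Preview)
Your proof is correct and follows essentially the same route as the paper: set up the right and left basic triples $(E,B,x^*y)$ and $(E,A,xy^*)$, verify admissibility, faithfulness, and positivity on both sides, identify $E_0^r$ and $E_0^l$ with the dense $*$-ideals $\gen\{x^*y\}$ and $\gen\{xy^*\}$ via Corollary~\ref{cor:adfaith}, and then invoke the Section~3 machinery (the paper simply cites Corollary~\ref{cor:00}). Your final paragraph, where you pass to $B_\beta$ and use the full Hilbert-module structure of $\overline{p(E)}$ to show that the abstract lattice isomorphism really is $\beta\mapsto\beta|_A$, actually supplies a verification that the paper's proof leaves implicit.
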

\begin{proof}
Let $\pr{\,}{\,}_B:E\times E\to B$ and $\pr{\,}{\,}_A:E\times E\to A$
be such that $\pr{x}{y}_B=x^*y$ and $\pr{x}{y}_A=xy^*$. Then $E$ is
both a positive $B$-module and a positive $A$-module. Since $B$ is
admissible, so are $E$ and $A$. Besides $E$ is a faithful $B$-module,
for if $xb=0$ $\forall x\in E$, then $\sum_{j}x_j^*y_j b=0$ $\forall
x_j,y_j\in E$, so $b^*b=0$, and this implies $b=0$ because $B$ is
admissible. Similarly, $E$ is a faithful $A$-module. It follows by
\ref{cor:adfaith} that we can identify $E_0^r$ with $\gen\{x^*y:\,
x,y\in E\}$ and $E_0^l$ with $\gen\{xy^*:\, x,y\in E\}$. Now the proof
ends with an invocation to Corollary~\ref{cor:00}   
 \end{proof}

\section{\texorpdfstring{$C^*$}{C*}-ternary rings}

\par As previously mentioned, Zettl found a unique decomposition
$E=E_+\bigoplus E_-$ of any $C^*$-tring $E$, $E_+$ being isomorphic to
a TRO and $E_-$ being isomorphic to an anti-TRO
(see the discussion preceding Corollary \ref{cor:preeq}). 
Of course, because of the uniqueness of the fundamental decomposition,
there is a left version of the stuation above: 
$E_+:=\{ x\in E:\pr{x}{x}_l\in E^l_+\}$, 
$E_-:=\{ x\in E:\pr{x}{x}_l\in -E^l_+\}$, $\pr{E_+}{E_-}_l=0$,
$E^l=E_+^l\oplus E_-^l$, and  $(E_+,-\pr{\cdot}{\cdot}_l)$ and
$(E_-,-\pr{\cdot}{\cdot}_l)$ are full left  
Hilbert $E_+^l$ and $E_-^l$ modules respectively. This way, $E$
is an $(E^l-E^r)$ Banach bimodule that satisfies
\begin{equation*}
  \pr{x}{y}_lz
    =\mu (x,y,z)
    =x\pr{y}{z}_r,\ \forall x,y,z\in E.
\end{equation*}  
\par If $E$ is a \ct, we define $E_p:=E_+\oplus
E_-^{\text{op}}$. Then $E_p$ is a positive \ct, and $E_p^r=E^r$,
$E_p^l=E^l$. Therefore $E_p$ is a $(E^l-E^r)$-imprimitivity bimodule,
so in particular $E^l$ and $E^r$ are Morita-Rieffel equivalent. Note also that
if $\phi:E\to F$ is a \hm of \cts, then 
$\phi(E_+)\subseteq F_+$ and $\phi(E_-)\subseteq
F_-$, because $\pr{\phi(x)}{\phi(x)}=\phi^r(\pr{x}{x})$. Therefore
$\phi:E_p\to F_p$ is also a \hm of \cts. Thus $E\mapsto E_p$ is a
functor.   
\par Let $E^*$ be the reverse $*$-tring of $E$. It is clear that a norm
on $E$ is a $C^*$-norm if and 
only if it is a $C^*$-norm on $E^*$. Moreover, $E$ is a (positive) \ct
if 
and only if so is $E^*$, and $E^l=(E^*)^r$, $E^r=(E^*)^l$. Note that
$E$ and $E^*$ are essentially the same object as \cts.  
Thus the properties of $E^r$ and $E^l$ deduced from properties of $E$ 
will be the same. 
\begin{definition}\label{defn:ideals}
By a left (right) ideal of the $C^*-$ternary ring 
$E$ we mean a closed subspace $F$ of $E$
such that $(E,E,F)\subseteq F$ (respectively: $(F,E,E)\subseteq
F$). An ideal of $E$ is both a left and a right ideal of $E$. We
denote by $L(E)$, $R(E)$, and $I(E)$ the families of left, right and
twosided ideals of $E$.
\end{definition}

Our definition of ideal, for a closed subspace $F$ of $E$, is
equivalent to the definition 
which just requires the condition $(E,F,E)\subset F$ to be satisfied. 
Note that $E_+$ and $E_-$ are ideals in every \ct $E$. Moreover,
since $E_+$ and $E_-$ are orthogonal, it easily follows that a
closed subspace $F$ of $E$ is an ideal of $E$ if and only if it is an 
ideal in $E_p$. Thus the ideal structures of $E$ and of $E_p$ are the
same.   
\par If $A$ is a \cs, we will denote by $I(A)$ and $H(A)$
respectively the families of (closed) twosided ideals and
hereditary $C^*$-subalgebras of 
$A$. 
\par As in the algebraic case, if $E$ is a \ct and $F$ is a sub-\ct of
$E$, then the subalgebra  
$\ov{\textrm{span}}\pr{F}{F}_r$ of $E^r$ may be taken to represent the 
$C^*$-algebra $F^r$. With this choice of $F^r$ we have the following
result: 
\begin{proposition}\label{prop:correspondence}
The map $L(E)\to H(E^r)$ given by $F\mapsto F^r$ is a 
bijection, with inverse given by $A\mapsto EA$. When restricted to
$I(E)$, the map $F\mapsto F^r$ is a bijection onto $I(E^r)$. Moreover,
all of these maps are lattice isomorphisms.
\end{proposition}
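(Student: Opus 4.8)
The plan is to establish the bijection $L(E)\to H(E^r)$, $F\mapsto F^r$, with inverse $A\mapsto EA$, and then to check that both maps restrict correctly to two-sided ideals and are lattice morphisms. First I would verify that $F\mapsto F^r:=\ov{\gen}\pr{F}{F}_r$ actually lands in $H(E^r)$ when $F$ is a left ideal: for $x,y\in F$ and $a\in E^r$ we have $a^*\pr{x}{y}_ra=\pr{xa}{ya}_r$ (using $xa=x\pr{\cdot}{\cdot}$-type identities valid in the Hilbert module $E$), and since $xa=(x,\dots)\in(E,E,F)\subseteq F$... actually more carefully, the left-ideal condition $(E,E,F)\subseteq F$ says $E\pr{E}{F}_r\subseteq F$, hence $Fa\subseteq F$ for $a\in F^r$ and, by continuity and density, for $a$ in the hereditary subalgebra generated; the standard argument that $\ov{bEb}\subseteq$ (image) for positive $b$ then shows $F^r$ is hereditary. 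Conversely, for $A\in H(E^r)$, set $EA:=\ov{\{xa:x\in E,a\in A\}}$; since $A$ is hereditary in the $C^*$-algebra $E^r$ and $E$ is a full right Hilbert $E^r$-module (Corollary~\ref{cor:cbt}), $EA$ is a closed subspace with $(E,E,EA)=E\pr{E}{EA}_r\subseteq E\ov{AE^rA}\cdot(\dots)\subseteq EA$ — again this reduces to the fact that $\pr{E}{xa}_r=\pr{E}{x}_r a\in E^r A\subseteq E^rA$, and then one multiplies on the left by $E$; the key point is $E^rA E^r\cap$(stuff) controlled by heredity. So $EA\in L(E)$.

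Next I would prove the two composites are the identity. For $A\in H(E^r)$: we must show $(EA)^r=\ov{\gen}\pr{EA}{EA}_r=\ov{\gen}\pr{Ea}{Eb}_r=\ov{\gen}\,a^*\pr{E}{E}_r b=\ov{\gen}\,a^*E^r b$ (using fullness of $E$ over $E^r$), and the closed span of $\{a^*xb:a,b\in A,x\in E^r\}$ equals $A$ precisely because $A$ is a hereditary $C^*$-subalgebra, hence $A=\ov{AE^rA}$. For $F\in L(E)$: we must show $EF^r=F$, i.e. $\ov{E\cdot\ov{\gen}\pr{F}{F}_r}=F$. The inclusion $\subseteq$ is the left-ideal property plus closure. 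For $\supseteq$, pick $x\in F$; then $x\pr{x}{x}_r^{1/3}\to x$ (approximate-unit argument in the Hilbert module, since $\norm{x-x\pr{x}{x}_r^{1/n}}\to 0$), and $\pr{x}{x}_r^{1/3}$ lies in the $C^*$-algebra generated by $\pr{x}{x}_r\in F^r$, hence $x\cdot(\text{that})\in EF^r$; taking closure gives $x\in EF^r$. This uses standard Hilbert-module facts, so it should be routine once set up.

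For the restriction to two-sided ideals: if $F\in I(E)$, then $F$ is in particular a left ideal, so $F^r\in H(E^r)$; I would show $F^r\in I(E^r)$ by using the right-ideal condition $(F,E,E)\subseteq F$, equivalently $\pr{F}{E}_r\cdot E^r\subseteq$... more directly, $\pr{x}{y}_r a=\pr{x}{ya}_r$ with $ya=(y,\dots)\in(E,E,E)$; the condition $(F,E,E)\subseteq F$ gives $\pr{F}{E}_r E^r\subseteq F^r$ after pairing, hence $F^r$ is a right ideal in $E^r$, and symmetrically a left ideal, so two-sided. Conversely a two-sided ideal $A\vartriangleleft E^r$ gives $EA$ with $(EA,E,E)=E\pr{A^*E}{E}_r\subseteq\dots$ — one checks $\pr{xa}{y}_r=a^*\pr{x}{y}_r\in AE^r\subseteq A$, so $(EA,E,E)\subseteq EA$, giving $EA\in I(E)$. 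Surjectivity onto $I(E^r)$ then follows from the already-established surjectivity onto $H(E^r)$ together with these implications.

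Finally, lattice isomorphism: both maps are clearly order-preserving (monotone in $\subseteq$) and mutually inverse, hence automatically lattice isomorphisms once one knows the relevant posets are lattices — or one checks directly that $F\mapsto F^r$ preserves $\cap$ and $\vee$. The main obstacle I anticipate is the careful handling of the non-unital, not-necessarily-$\sigma$-unital situation in the two composite computations, specifically justifying the approximate-identity argument $x\pr{x}{x}_r^{1/n}\to x$ inside $E$ and the heredity identity $A=\ov{AE^rA}$; these are standard but need the full Hilbert-module structure on $E$ (equivalently $E=E_p$'s positive part behavior) established via Zettl's theorem and Corollary~\ref{cor:cbt}. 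Everything else is bookkeeping with the ternary identities and the pairing-module identities $xa=\mu(x,\cdot,\cdot)$, $\pr{xa}{y}_r=a^*\pr{x}{y}_r$, $\pr{x}{ya}_r=\pr{x}{y}_r a$.
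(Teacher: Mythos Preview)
Your proposal is correct and follows essentially the same route as the paper. The paper reduces to $E_p$ (so that $E$ is a full right Hilbert $E^r$-module), cites \cite[3.22]{rw} for the ideal correspondence, and then sketches exactly the computations you outline: $(E,E,EA)=E\pr{E}{E}A\subseteq EA$ to show $EA\in L(E)$; the heredity of $F^r$ via $\pr{F}{F}\pr{E}{E}\pr{F}{F}=\pr{(E,F,F)}{(E,F,F)}\subseteq\pr{F}{F}$; and $(EA)^r=\ov{\gen}\,A\pr{E}{E}_rA=\ov{AE^rA}=A$. For the other composite the paper simply invokes the \ct identity $F=FF^r$, which is precisely what your approximate-unit computation $x\pr{x}{x}_r^{1/n}\to x$ proves; so your version is a spelled-out variant of the same argument rather than a different one.
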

\begin{proof}
We prove that the map $L(E)\to H(E^r)$ is a bijection. Recalling that we
may replace $E$ by $E_p$ (which can be seen as a full right Hilbert
$E^r$-module), the rest of the proof follows from  
\cite[3.22]{rw}. If $A$ is a \scs of $E^r$:
$(E,E,EA)=E\pr{E}{EA}=E\pr{E}{E}A=(E,E,E)A\subseteq EA,$ 
so $EA$ is a left ideal in $E$.
Conversely, if $F$ is a left ideal in $E$:
\begin{equation*}
  \pr{F}{F}\pr{E}{E}\pr{F}{F}
      =\pr{\,E\pr{F}{F}\,}{\,E\pr{F}{F}\,}
      =\pr{(E,F,F)}{(E,F,F)}
      \subseteq\pr{F}{F}.
\end{equation*} 
Thus, taking the closed linear spans in both sides of the above
inclusion we have:  
$F^rE^rF^r=F^r$, which shows that $F^r$ is hereditary.
To see that the correspondences are mutually inverses, note that if 
$F$ is a \ct, then $F=FF^r$. On the other hand, if $A$ is a hereditary
\scs of $E^r$, then $EA=\ov{\gen}\pr{EA}{EA}_r
=\ov{\gen}A\pr{E}{E}_rA=AE^rA=A$.
 \end{proof}

\begin{corollary}\label{cor:lattice}
Let $\pi:E\to F$ be a \hm of $*$-trings, where $E$ and $F$ are
\cts. Then $(\ker \pi)^r=\ker(\pi^r)$.  
\end{corollary}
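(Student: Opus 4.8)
The plan is to obtain the non-trivial inclusion for free from the lattice isomorphism of Proposition~\ref{prop:correspondence}, once the trivial inclusion and some bookkeeping are in place. First I would note that $\ker\pi$ is a closed two-sided ideal of the \ct $E$: continuity of $\pi$ is automatic by Proposition~\ref{prop:cbt2} (every \hm of \cts is contractive), and for $z\in\ker\pi$, $x,y\in E$ one has $\pi\big((x,z,y)\big)=(\pi(x),\pi(z),\pi(y))=0$, so $(E,\ker\pi,E)\subseteq\ker\pi$ and hence $\ker\pi\in I(E)$ by the remark after Definition~\ref{defn:ideals}. Following the convention preceding Proposition~\ref{prop:correspondence}, I then represent $(\ker\pi)^r$ by the ideal $\ov{\textrm{span}}\,\pr{\ker\pi}{\ker\pi}_r$ of $E^r$; with this representation the inclusion $(\ker\pi)^r\subseteq\ker(\pi^r)$ is immediate, since $\pi^r(\pr{x}{y}_r)=\pr{\pi(x)}{\pi(y)}_r=0$ for $x,y\in\ker\pi$ by Proposition~\ref{prop:cbt2} and $\ker(\pi^r)$ is closed.

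For the reverse inclusion I would exploit that $\ker(\pi^r)$, being a closed two-sided ideal of $E^r$, belongs to $H(E^r)$, so Proposition~\ref{prop:correspondence} identifies its preimage under the order isomorphism $F\mapsto F^r$ as $E\,\ker(\pi^r)$; in particular $\big(E\,\ker(\pi^r)\big)^r=\ker(\pi^r)$. Now $E\,\ker(\pi^r)\subseteq\ker\pi$, because $\pi(xa)=\pi(x)\pi^r(a)=0$ for all $x\in E$ and $a\in\ker(\pi^r)$ (again Proposition~\ref{prop:cbt2}) and $\ker\pi$ is closed. Applying the order-preserving bijection $F\mapsto F^r$ to this inclusion yields $\ker(\pi^r)=\big(E\,\ker(\pi^r)\big)^r\subseteq(\ker\pi)^r$, which combined with the first paragraph gives $(\ker\pi)^r=\ker(\pi^r)$.

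The only place that demands some care is the appeal to Proposition~\ref{prop:correspondence}: one must be sure it applies to an arbitrary \ct $E$ (not merely a positive one, which is fine since the ideal structures of $E$ and $E_p$ coincide and $E^r=E_p^r$) and that it furnishes exactly the identity $(EA)^r=A$ for the hereditary $C^*$-subalgebra $A=\ker(\pi^r)$. Everything else — the two short chains of identities involving $\pi^r$ — is a purely formal consequence of the functoriality recorded in Proposition~\ref{prop:cbt2} and Corollary~\ref{cor:functor2}.
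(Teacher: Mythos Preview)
Your proof is correct and follows essentially the same approach as the paper's: the inclusion $(\ker\pi)^r\subseteq\ker(\pi^r)$ is obtained directly from $\pi^r(\pr{x}{y}_r)=\pr{\pi(x)}{\pi(y)}_r$, and the reverse inclusion comes from $E\ker(\pi^r)\subseteq\ker\pi$ together with the lattice isomorphism of Proposition~\ref{prop:correspondence}. The paper's version is simply terser, omitting the verification that $\ker\pi$ is a closed ideal and compressing the appeal to Proposition~\ref{prop:correspondence} into a single ``so''.
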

\begin{proof}
It is clear that $\ker\pi\supseteq E \ker\pi^r$, so
$(\ker\pi)^r\supseteq\ker\pi^r$. On the other hand 
$(\ker\pi)^r=\overline{\textrm{span}\{\pr{x}{y}_r:\,
  x,y\in\ker\pi\}}\subseteq\ker\pi^r$.  
 \end{proof}
\begin{remark}\label{rem:lattice}
By Proposition \ref{prop:*(pi)0} if $\pi:E\to F$ is a
  surjective \hm between \cts, then $\pi^r_0:E_0^r\to F_0^r$ is also
  surjective, so also is $\pi^r:E^r\to F^r$ for the image of $\pi^r$
  is closed. However the
converse is false: consider the Hilbert space inclusion
$\C\stackrel{\iota}{\inc}\C^2$; then $\iota$ is not onto, although
$\iota^r$ is the identity on $\C$.
\end{remark}
For a proof of the next result the reader is referred to
\cite[3.25]{rw}. 
\begin{proposition}\label{prop:quotient} Let $F$ be an ideal of a \ct $E$,
and consider the quotient $E/F$ with its natural structure of
$*$-tring. Then $E/F$ is a \ct with the quotient norm, and  
$(E/F)^r=E^r/F^r$.
\end{proposition}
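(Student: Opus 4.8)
The plan is to reduce to a positive \ct and then invoke (and partly reprove) the standard construction of quotients of Hilbert modules. Since $E_+$ and $E_-$ are orthogonal ideals of $E$, any ideal $F$ splits as $F=F_+\oplus F_-$ with $F_\pm:=F\cap E_\pm$; passing to $E_p=E_+\oplus E_-^{\text{op}}$ one gets a corresponding ideal $F_p=F_+\oplus F_-^{\text{op}}$, and one checks $E_p^r=E^r$, $F_p^r=F^r$ (since a closed linear span is unchanged under negation), $(E/F)_p=E_p/F_p$, while the sup norm on a direct sum induces on a quotient the direct sum of the quotient norms. Since $(E/F)^r=((E/F)_p)^r$, it suffices to treat the case in which $E$ is a positive \ct, i.e.\ a full right Hilbert $E^r$-module; write $J:=F^r$, which by Proposition~\ref{prop:correspondence} is a closed twosided ideal of $E^r$ with $J=\ov{\gen}\pr{F}{F}_r$ and $F=EJ$.

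By the usual theory of Hilbert modules (\cite[3.25]{rw}), $E/EJ$ is a right Hilbert $E^r/J$-module under $(x+F)(a+J):=xa+F$ and $\pr{x+F}{y+F}:=\pr{x}{y}_r+J$, these being well defined because $EJ\subseteq F$ and $\pr{E}{F}_r+\pr{F}{E}_r\subseteq J$, and full over $E^r/J$ because $E$ is full over $E^r$. The one point requiring care is that the norm $x+F\mapsto\norm{\pr{x}{x}_r+J}^{1/2}$ coming from this structure coincides with the Banach-space quotient norm $\norm{x+F}=\inf_{f\in F}\norm{x-f}$ of $E/F$. For the inequality ``$\geq$'' one uses that $\pr{x-f}{x-f}_r\equiv\pr{x}{x}_r\pmod J$ for $f\in F$, so that $\norm{x-f}^2=\norm{\pr{x-f}{x-f}_r}\geq\norm{\pr{x}{x}_r+J}$; for ``$\leq$'' one takes an approximate unit $(u_\lambda)$ of $J$, notes $xu_\lambda\in EJ=F$, and combines the identity $\pr{x-xu_\lambda}{x-xu_\lambda}_r=\pr{x}{x}_r-\pr{x}{x}_r u_\lambda-u_\lambda\pr{x}{x}_r+u_\lambda\pr{x}{x}_r u_\lambda$ with the standard fact $\norm{(1-u_\lambda)\,a\,(1-u_\lambda)}\to\norm{a+J}$, valid for every $a\in E^r$.

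With the two norms identified, $E/F$ with the quotient norm is isometrically a full right Hilbert $E^r/J$-module, hence a positive \ct; in particular it is complete and satisfies Definition~\ref{defn:cnorm}, which proves the first assertion. For the second, now that $E/F$ is a \ct the quotient map $q:E\to E/F$ is a surjective \hm of \cts, so by Remark~\ref{rem:lattice} the induced $*$-homomorphism $q^r:E^r\to(E/F)^r$ is onto, while by Corollary~\ref{cor:lattice} its kernel is $(\ker q)^r=F^r$; therefore $(E/F)^r\cong E^r/F^r$. (Alternatively, the coefficient algebra of the Hilbert module $E/EJ$ is $E^r/J$ by the uniqueness in Corollary~\ref{cor:cbt}.) I expect the norm identification of the second paragraph to be the only genuine obstacle: the reductions of the first paragraph and the conclusion of the third are routine manipulations with the functorial apparatus already established, and the underlying Hilbert-module facts are exactly those of \cite[3.25]{rw}.
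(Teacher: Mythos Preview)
Your proof is correct and follows exactly the route the paper indicates: the paper gives no argument beyond a reference to \cite[3.25]{rw}, and you have spelled out precisely what that citation entails---the reduction from a general \ct to the positive (Hilbert-module) case via the fundamental decomposition, and then the standard quotient-module argument. The norm identification you single out is indeed the only substantive step, and your treatment of it (approximate unit of $J$, the limit $\norm{(1-u_\lambda)a(1-u_\lambda)}\to\norm{a+J}$) is the usual one.
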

\begin{corollary}\label{cor:quotient}
Let $E$ and $G$ be \cts, and $\pi:E\to G$ a \hm of $*$-trings. 
Consider $F=\ker(\pi)$, and let $p:E^r\to E^r/F^r$ be the  
quotient map. Then there exists a unique \hm of \css 
$\ov{\pi^r}:E^r/F^r\to G^r$ such that $\ov{\pi^r}p=\pi^r$.  
The \hm $\ov{\pi^r}$ is injective. 
In particular, if $\pi:E\to E/F$ is the quotient map, where $F$ 
is an ideal of $E$, then $\ov{\pi^r}:E^r/F^r\to (E/F)^r$ is a   
natural isomorphism.
\end{corollary}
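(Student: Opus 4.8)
The plan is to read this statement as the $*$-tring incarnation of the first isomorphism theorem for \css, assembled from two facts already available: the functoriality of $E\mapsto E^r$ (Corollary~\ref{cor:functor2}) and the identity $(\ker\pi)^r=\ker(\pi^r)$ (Corollary~\ref{cor:lattice}).

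First I would apply the functor of Corollary~\ref{cor:functor2} to $\pi$ to obtain the $*$-homomorphism $\pi^r:E^r\to G^r$. Note that $F=\ker\pi$ is a closed twosided ideal of $E$: it is closed because $\pi$ is contractive by Proposition~\ref{prop:cbt2}, and the ideal property follows at once from $\pi$ being a \hm of $*$-trings. With the identification $F^r=\ov{\textrm{span}}\,\pr{F}{F}_r\subseteq E^r$ fixed before Proposition~\ref{prop:correspondence}, Corollary~\ref{cor:lattice} gives $\ker(\pi^r)=F^r$; in particular $F^r$ is a closed twosided ideal of the \cs $E^r$, so $E^r/F^r$ is a \cs and $p$ makes sense. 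Since $\pi^r$ annihilates $F^r$, the universal property of the quotient \cs yields a unique $*$-homomorphism $\ov{\pi^r}:E^r/F^r\to G^r$ with $\ov{\pi^r}\,p=\pi^r$, and because $\ker(\pi^r)$ is exactly $F^r$ this induced map is injective (hence isometric onto its range, by the usual rigidity of $*$-homomorphisms between \css).

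For the ``in particular'' part I would take $\pi$ to be the quotient map $q:E\to E/F$, which is onto. By Remark~\ref{rem:lattice} (i.e. Proposition~\ref{prop:*(pi)0} together with the closedness of the range of $q^r$), surjectivity of $q$ forces $q^r:E^r\to (E/F)^r$ to be onto; hence the injective $*$-homomorphism $\ov{q^r}:E^r/F^r\to (E/F)^r$ is also onto, so it is an isomorphism of \css --- which recovers concretely the identification $(E/F)^r=E^r/F^r$ of Proposition~\ref{prop:quotient}. Naturality is then formal: a commuting square relating $(E,F)$ to another pair $(E',F')$ produces, under Corollary~\ref{cor:functor2}, a commuting square of the associated algebras, and since each $\ov{q^r}$ is characterized as the unique arrow factoring $q^r$ through the quotient, these factorizations fit into a commuting square with the induced map $E^r/F^r\to (E')^r/(F')^r$.

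I do not anticipate a genuine obstacle: all the real content lies in Corollaries~\ref{cor:functor2} and \ref{cor:lattice} and in the elementary quotient theory of \css. The only two points worth an explicit line are checking that $\ker\pi$ is a closed ideal --- so that $E/F$ and $E^r/F^r$ are legitimate objects --- and pinning down the meaning of ``natural'', which is nothing more than functoriality made explicit.
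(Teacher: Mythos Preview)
Your proposal is correct and follows essentially the same route as the paper: obtain $\pi^r$ via Proposition~\ref{prop:cbt2}/Corollary~\ref{cor:functor2}, invoke Corollary~\ref{cor:lattice} for $\ker(\pi^r)=F^r$, and apply the universal property of the quotient. Your version is slightly more explicit than the paper's --- you spell out why $\ker\pi$ is a closed ideal and cite Remark~\ref{rem:lattice} for the surjectivity of $q^r$ in the ``in particular'' part, whereas the paper leaves these implicit --- but the argument is the same.
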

\begin{proof}
Proposition \ref{prop:cbt2} provides a unique \hm of  
\css $\pi^r:E^r\to G^r$ such that 
$\pr{\pi(x)}{\pi(y)}=\pi^r(\pr{x}{y})$, $\forall x, y\in E$. The
existence and uniqueness of $\ov{\pi^r}$, as well as its
injetivity, follow now from the quotient universal property, together
with the fact that $\ker(\pi^r)=F^r$ by Corollary
\ref{cor:lattice}. Finally, if $F$ is an ideal of $E$, by Proposition  
\ref{prop:quotient} we have that $E/F$ is a \ct, and the
projection $\pi:E\to E/F$ is a \hm of $*$-trings.   
 \end{proof}

\begin{corollary}\label{cor:ztexact}\index{funtor!de Zettl}
The functor $E\mapsto E^r$, $\pi\mapsto\pi^r$, from the category 
of \cts into the category of \css, is exact. More precisely: if 
\begin{equation*} \xymatrix
   {0\ar@{->}[r]&F_1 \ar@{->}[r]^{\phi}&F_2\ar@{->}[r]^{\psi}&F_3\ar@{->}[r]&0}
\end{equation*}  
is an exact sequence of \cts, then the sequence:  
\begin{equation*} \xymatrix
    {0\ar@{->}[r]&F_1^r\ar@{->}[r]^{\phi^r}&F_2^r
    \ar@{->}[r]^{\psi^r}&F_3^r\ar@{->}[r]&0}
\end{equation*} 
also is exact.
\end{corollary}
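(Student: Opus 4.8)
Exactness of the second sequence means precisely that $\phi^r$ is injective, $\psi^r$ is surjective, and $\operatorname{im}\phi^r=\ker\psi^r$; the plan is to settle these three conditions one at a time, each by a direct appeal to the facts already established about the functor $E\mapsto E^r$. As a preliminary observation, functoriality (Corollary~\ref{cor:functor2}) gives $\psi^r\phi^r=(\psi\phi)^r=0$, so the inclusion $\operatorname{im}\phi^r\subseteq\ker\psi^r$ is automatic and only the reverse inclusion will need an argument.

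For the two outer spots there is essentially nothing to do beyond citing earlier results. Since $\phi$ is injective, Proposition~\ref{prop:cbt2} makes $\phi^r$ isometric, hence injective, which is exactness at $F_1^r$. Since $\psi$ is surjective, Remark~\ref{rem:lattice} gives that $\psi^r$ is surjective (the surjectivity passes from $\psi^r_0$ to $\psi^r$ because the latter has closed range), which is exactness at $F_3^r$.

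For exactness at $F_2^r$ I would set $F:=\phi(F_1)$, which by exactness of the given sequence equals $\ker\psi$. As $\phi$ is injective it is isometric (Proposition~\ref{prop:cbt2}), so $F$ is a closed sub-\ct of $F_2$ and $\phi$ restricts to an isomorphism $F_1\cong F$. Applying Corollary~\ref{cor:lattice} to $\psi$ gives $\ker\psi^r=(\ker\psi)^r=F^r$, where $F^r$ is read as the subalgebra $\ov{\textrm{span}}\,\pr{F}{F}_r$ of $F_2^r$ in the convention fixed just before Proposition~\ref{prop:correspondence}. It then remains to recognise this subalgebra as $\operatorname{im}\phi^r$: since $\phi^r$ is a $*$-homomorphism of \css, its range is a closed $C^*$-subalgebra of $F_2^r$, and from $\phi^r(\pr{x}{y}_r)=\pr{\phi(x)}{\phi(y)}_r$, together with the density of $\operatorname{span}\{\pr{x}{y}_r:x,y\in F_1\}$ in $F_1^r$, one sees that $\operatorname{im}\phi^r$ and $F^r$ share a common dense $*$-subalgebra and hence coincide. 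Chaining the equalities, $\ker\psi^r=F^r=\operatorname{im}\phi^r$, which is exactness at $F_2^r$.

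The one step that warrants some care — and the one I would flag as the main, if mild, obstacle — is exactly this last identification: keeping straight that the abstract $C^*$-algebra $F_1^r$ is carried by $\phi^r$ onto the concrete subalgebra $\ov{\textrm{span}}\,\pr{\phi(F_1)}{\phi(F_1)}_r$ of $F_2^r$, and that, under the bookkeeping convention for $(\,\cdot\,)^r$ on sub-$C^*$-trings, this is precisely what both $\ker\psi^r$ (through Corollary~\ref{cor:lattice}) and $\operatorname{im}\phi^r$ reduce to. Everything else amounts to a bare invocation of Proposition~\ref{prop:cbt2}, Corollary~\ref{cor:lattice} and Remark~\ref{rem:lattice}.
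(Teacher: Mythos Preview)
Your proof is correct and is essentially the argument the paper intends: the corollary is stated without proof immediately after Corollary~\ref{cor:quotient}, and your unpacking via Proposition~\ref{prop:cbt2}, Remark~\ref{rem:lattice}, and Corollary~\ref{cor:lattice} (together with the identification $\operatorname{im}\phi^r=\ov{\textrm{span}}\,\pr{\phi(F_1)}{\phi(F_1)}_r$) is exactly the content implicit in that placement. The only cosmetic difference is that the paper would phrase the middle step through Corollary~\ref{cor:quotient} rather than Corollary~\ref{cor:lattice} directly, but the latter is what underlies the former anyway.
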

\begin{corollary}\label{cor:imagem}
If $\pi :E\to F$ is a \hm of \cts, then $\pi (E)$ 
is closed in $F$. The ideals of a \ct $E$ are exactly the kernels 
of the \hms defined on $E$.
\end{corollary}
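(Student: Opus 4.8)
The plan is to derive both assertions directly from the exactness of the functor $E\mapsto E^r$ (Corollary~\ref{cor:ztexact}) together with the ideal/hereditary correspondence of Proposition~\ref{prop:correspondence} and the quotient description of Corollary~\ref{cor:quotient}. For the first claim, let $\pi\colon E\to F$ be a \hm of \cts. First I would factor $\pi$ as $E\xrightarrow{q} E/\ker\pi \xrightarrow{\bar\pi} F$, where $q$ is the quotient map (legitimate since $\ker\pi$ is a closed ideal of $E$ and, by Proposition~\ref{prop:quotient}, $E/\ker\pi$ is again a \ct with the quotient norm) and $\bar\pi$ is the induced injective \hm. By Proposition~\ref{prop:cbt2}, an injective \hm of \cts is isometric, hence $\bar\pi$ is isometric; therefore $\pi(E)=\bar\pi(E/\ker\pi)$ is the isometric image of a complete space, thus complete, thus closed in $F$.

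For the second claim, I must show that a closed subspace $F\subseteq E$ is an ideal of $E$ if and only if $F=\ker\rho$ for some \hm $\rho$ defined on $E$. The ``only if'' direction follows immediately from Proposition~\ref{prop:quotient}: if $F$ is an ideal, then $E/F$ is a \ct and the quotient map $q\colon E\to E/F$ is a \hm of $*$-trings with $\ker q=F$. For the converse, suppose $F=\ker\rho$ where $\rho\colon E\to G$ is a \hm of $*$-trings and $G$ is a \ct. Here the key point is that $\ker\rho$ is automatically closed: indeed, $\rho$ is contractive by Proposition~\ref{prop:cbt2}, so $\ker\rho$ is a closed subspace; and it is clearly invariant under the ternary product in the relevant sense, i.e. $(E,\ker\rho,E)\subseteq\ker\rho$, which by the remark following Definition~\ref{defn:ideals} is equivalent to $\ker\rho$ being a twosided ideal of $E$.

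The main obstacle, such as it is, lies in the first part: one must be careful that the factorization $\pi=\bar\pi\circ q$ takes place entirely within the category of \cts, so that the isometry statement of Proposition~\ref{prop:cbt2} applies. This requires knowing $E/\ker\pi$ is a genuine \ct (Proposition~\ref{prop:quotient}) and that $\bar\pi$ is a bona fide injective \hm of $*$-trings between \cts — both of which are already in hand. Once the isometry is invoked, closedness of the image is automatic, and no estimate or completion argument beyond what the earlier results supply is needed.
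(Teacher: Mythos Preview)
Your argument is correct and is exactly the natural proof: factor $\pi$ through $E/\ker\pi$, use Proposition~\ref{prop:quotient} to see the quotient is a \ct, then apply Proposition~\ref{prop:cbt2} (injective $\Rightarrow$ isometric) to conclude the image is complete, hence closed; the second claim follows from Proposition~\ref{prop:quotient} and the continuity of $\rho$. The paper itself gives no proof of this corollary, leaving it as an immediate consequence of the preceding results, so there is nothing to compare against beyond noting that your write-up makes explicit precisely the steps the paper takes for granted. One cosmetic remark: your opening sentence announces that you will use the exactness of the functor $E\mapsto E^r$ (Corollary~\ref{cor:ztexact}) and the correspondence of Proposition~\ref{prop:correspondence}, but your actual argument never invokes either---it rests only on Propositions~\ref{prop:quotient} and~\ref{prop:cbt2}---so you may want to amend the plan to match what you in fact do.
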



\section{Applications}
\subsection{\texorpdfstring{$C^*$}{C*}-algebras associated with Fell bundles}
\par The proof of Theorem~1.1 of \cite{flau} relies on the existence
of a certain inner product (see Corollary~\ref{cor:uff!}
below), although no proof is included there of the 
fact that such inner product is indeed positive. In the following
lines we provide such a proof, and we refine the above mentioned
result.  

\par Recall that a right ideal $\me=(E_t)_{t\in G}$ of a Fell bundle
$\mb=(B_t)_{t\in G}$ is a sub-Banach bundle of $\mb$ such that
$\me\mb\subseteq\me$. 

\par Given a right Hilbert $B$-module $X$, let denote by $D_X$ the 
cone of finite sums $\sum_i\pr{x_i}{x_i}\subseteq B^+$. It is clear
that if $\{X_\lambda\}_{\lambda\in\Lambda}$ is a family of right
Hilbert $B$-modules and $X:=\oplus_\lambda X_\lambda$ (direct sum of
Hilbert modules), then
$\sum_\lambda D_{X_\lambda}\subseteq D_X$ -with equality if $\Lambda$
is finite- and $\sum_\lambda D_{X_\lambda}$ is dense in $D_X$.     
\par Similarly, for the right ideal $\mathcal{E}$ of the Fell bundle
$\mb$, we define $D_\me:=\{\sum_{i=1}^nc_i^*c_i: n\in \nt,
c_i\in \me, \forall i\}\subseteq B_e^+$. Then we have:
\begin{lemma}\label{lem:pre2}
Let $\mathcal{E}=(E_t)_{t\in G}$ be a right ideal of the Fell bundle
$\mb=(B_t)_{t\in G}$. Then $\gen(\mathcal{\me^*\me}\cap B_e)$ is dense
in $B_e$ if and only if the cone 
$D_\me$ satisfies the following property: 
\begin{equation}\label{eq:prop}
\forall b\in B_e,\ 
\epsilon>0,\text{ there exists }d\in D_\me\text{ such that
}\norm{d}\leq 1\text{ and } 
\norm{b-bd}<\epsilon.
\end{equation} 
\end{lemma}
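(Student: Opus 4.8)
The plan is to recognize both implications as statements about the selfadjoint ideal $I_0:=\gen(\me^*\me\cap B_e)$ of the \cs $B_e$, the key being that $I_0$ is precisely the complex linear span of the cone $D_\me$. Two preliminary facts are needed. First, since each fibre $E_t$ is a complex Banach space, a polarization identity shows that every $c^{*}c'$ with $c,c'\in E_t$ is a $\C$-linear combination of elements $w^{*}w$ with $w\in E_t$; combined with $\me^*\me\cap B_e=\gen\{c^{*}c':\,c,c'\in E_t,\ t\in G\}$ this gives $I_0=\gen_{\C}(D_\me)$, so $I_0$ is dense in $B_e$ iff $\overline{I_0}=B_e$. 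Second, because $\me$ is a right ideal of $\mb$ we have $\me\,B_e\subseteq\me$ (and dually $B_e\,\me^{*}\subseteq\me^{*}$); from this it follows at once that $I_0$ is a selfadjoint twosided ideal of $B_e$ and, crucially, that $D_\me$ is stable under $d\mapsto b^{*}db$ for every $b\in B_e$: if $d=\sum_i c_i^{*}c_i$ then $b^{*}db=\sum_i(c_ib)^{*}(c_ib)\in D_\me$.

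The implication \eqref{eq:prop}$\Rightarrow$density is then immediate. Given $b\in B_e$ and $\epsilon>0$, pick $d\in D_\me\subseteq I_0$ with $\norm{d}\le1$ and $\norm{b-bd}<\epsilon$; since $I_0$ is a left ideal of $B_e$ we have $bd\in I_0$, so $b\in\overline{I_0}$, and as $b$ is arbitrary, $\overline{I_0}=B_e$.

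For the converse, assume $\overline{I_0}=B_e$; we must produce, for each $b\in B_e$ and $\epsilon>0$, a \emph{single} $d\in D_\me$ with $\norm{d}\le1$ and $\norm{b-bd}<\epsilon$. The idea is to run the usual construction of approximate units, but in a form that keeps the elements inside the cone $D_\me$. For $d\in D_\me$ and $n\in\nt$ set $d^{(n)}:=d^{2}(1/n+d)^{-2}$, the inverse taken in the unitization of $B_e$. Then $d^{(n)}=b_n^{*}\,d\,b_n$ with $b_n:=d^{1/2}(1/n+d)^{-1}\in B_e$ (note $b_n$ lies in $B_e$ itself, not merely in the unitization, because the corresponding function vanishes at $0$), so $d^{(n)}\in D_\me$ by the stability property above; moreover $\norm{d^{(n)}}\le1$, and a one-line functional-calculus estimate gives $\norm{zd-zd\,d^{(n)}}\le\norm{z}\,\tfrac{2\norm{d}}{1+n\norm{d}}$ for all $z\in B_e$, whence $\norm{y-y\,d^{(n)}}\to0$ for every $y$ in the closed left ideal $\overline{B_ed}$. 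It remains to cover $B_e$ by these left ideals. Put $V:=\overline{\bigcup_{d\in D_\me}\overline{B_ed}}$. Since $d,d'\in D_\me$ implies $0\le d\le d+d'\in D_\me$, and since $0\le x\le y$ implies $\overline{B_ex}\subseteq\overline{B_ey}$ (operator monotonicity plus functional calculus), any two of the subspaces $\overline{B_ed}$ lie in a common one; therefore $V$ is a closed linear subspace of $B_e$. As $V\supseteq B_ed$ for every $d\in D_\me$ and $I_0=\gen_{\C}(D_\me)$ is dense, $V\supseteq\overline{B_eI_0}=\overline{B_eB_e}=B_e$, so $V=B_e$. Finally, given $b\in B_e$ and $\epsilon>0$, choose $d\in D_\me$ and $y\in\overline{B_ed}$ with $\norm{b-y}<\epsilon/3$, then $n$ with $\norm{y-y\,d^{(n)}}<\epsilon/3$; using $\norm{d^{(n)}}\le1$ the triangle inequality yields $\norm{b-b\,d^{(n)}}<\epsilon$, and $d^{(n)}\in D_\me$ has norm $\le1$. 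This is exactly \eqref{eq:prop}, with the witness $d:=d^{(n)}$.

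The step I expect to be the real point is ensuring that the approximate-unit elements belong to the cone $D_\me$ itself and not merely to its linear span $I_0$, where the statement would be trivial. The naive choice $d(1/n+d)^{-1}$ cannot be written as $b^{*}db$ with $b\in B_e$ --- its would-be square root does not vanish at $0$, so it only lives in the unitization --- which is why one passes to $d^{(n)}=d^{2}(1/n+d)^{-2}$ and exploits the ``hereditary cone'' closure property of $D_\me$ coming from $\me$ being a right ideal. The polarization identity, the norm estimates, and the covering argument for $B_e$ are then routine \cs computations.
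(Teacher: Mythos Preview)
Your proof is correct. The easy implication (property $\Rightarrow$ density) is argued exactly as in the paper. For the converse, however, the paper takes a different and much shorter route: it assembles the fibres into a single right Hilbert $B_e$-module $E:=\bigoplus_{t\in G}E_t$, observes that the density hypothesis makes $E$ full, and then invokes \cite[Lemma~7.2(ii)]{l}, which furnishes an approximate unit for $B_e$ drawn from the cone $D_E$; since $D_\me=\sum_{t\in G}D_{E_t}$ is dense in $D_E$, a small perturbation yields witnesses in $D_\me$ itself. Your argument, by contrast, is self-contained: you isolate the hereditary property $b^*D_\me b\subseteq D_\me$ (an immediate consequence of $\me$ being a right ideal) and use it to force the functional-calculus elements $d^{(n)}=d^{2}(1/n+d)^{-2}$ back into the cone, then cover $B_e$ by the directed family of left ideals $\overline{B_ed}$. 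In effect you are reproving the relevant part of Lance's lemma in situ; you trade a one-line citation for an explicit construction. The paper's approach is quicker but relies on recognising the Hilbert-module packaging and on the external reference, whereas yours exposes precisely why the approximate unit can be chosen in $D_\me$ rather than merely in its span.
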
   
\begin{proof}
Suppose that $b\in B_e$ is such that for any $\epsilon>0$ there exists
$d\in D_\me$ such that $\norm{b-bd}<\epsilon$. Since $D_\me\subseteq
\gen(\me^*\me\cap B_e)$ and the latter is an ideal in $B_e$, we conclude
that $b\in\overline{\gen}(\me^*\me\cap B_e)$. Then $\gen(\me^*\me\cap
B_e)$ is dense in $B_e$ whenever $D_\me$ satisfies \eqref{eq:prop}.  
Note now that $D_\me=\sum_{t\in G}D_{E_t}$, which is dense in
$D_E$, where $E:=\oplus_{t\in G}E_t$. Thus $D_\me$ satisfies
\eqref{eq:prop} if and only if that property holds
for $D_E$. Assume that $\gen(\me^*\me\cap B_e)$ is dense in
$B_e$. Then $E$ is a full Hilbert module over $B_e$, and therefore it
satisfies \eqref{eq:prop} by \cite[(ii) of Lemma~7.2]{l}. 
 \end{proof}

\begin{lemma}\label{lem:2}
Let $\mb=(B_t)_{t\in G}$ be a Fell bundle over the locally compact
group $G$, $\ma=(A_t)$ a sub-Fell bundle of 
$\mb$, and $\me=(E_t)$ a right ideal of $\mb$ such that
$\ma\subseteq\me$, $\me\me^*\subseteq\ma$ and $\gen(\me^*\me\cap B_e)$
is dense in $B_e$. 
If $\xi\in L^1(\me)$, then
$\xi*\xi^*$ can be arbitrarily approximated in $L^1(\ma)$ by a finite
sum $\sum_{j=1}^m\eta_j*\eta_j^*$, where $\eta_j\in L^1(\ma)$,
$\forall j=1,\ldots,m$.    
\end{lemma}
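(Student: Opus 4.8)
The plan is to run the whole argument inside the Banach $*$-algebra $L^1(\mb)$. First I would record the ambient module structure: since $\me$ is a right ideal of $\mb$, the space $L^1(\me)$ is a closed right ideal of $L^1(\mb)$, so its adjoint $L^1(\me^*)=L^1(\me)^*$ is a closed left ideal; since $\ma$ is a sub-Fell bundle, $L^1(\ma)$ is a closed $*$-subalgebra; and the hypotheses $\ma\subseteq\me$, $\me\me^*\subseteq\ma$ translate, via the convolution and involution formulas on cross sections and a fibrewise grade count, into $L^1(\ma)\subseteq L^1(\me)$ and $L^1(\me)*L^1(\me^*)\subseteq L^1(\ma)$. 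In particular $\xi*\xi^*\in L^1(\ma)$, and, crucially, $\xi*\sigma\in L^1(\ma)$ for every $\sigma\in L^1(\me^*)$.

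The key point is then a purely algebraic identity. Given $\sigma_1,\dots,\sigma_m\in L^1(\me^*)$, set $\eta_j:=\xi*\sigma_j\in L^1(\ma)$ and $v:=\sum_{j=1}^m\sigma_j*\sigma_j^*$; then
\begin{equation*}
\sum_{j=1}^m\eta_j*\eta_j^*=\xi*\Bigl(\sum_{j=1}^m\sigma_j*\sigma_j^*\Bigr)*\xi^*=\xi*v*\xi^*,
\end{equation*}
so that $\bigl\|\xi*\xi^*-\sum_j\eta_j*\eta_j^*\bigr\|_1=\|\xi*(\xi^*-v*\xi^*)\|_1\le\|\xi\|_1\,\|\xi^*-v*\xi^*\|_1$. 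Hence the statement reduces to showing that, for every $\varepsilon>0$, there exist $\sigma_1,\dots,\sigma_m\in L^1(\me^*)$ with $\bigl\|\xi^*-\bigl(\sum_j\sigma_j*\sigma_j^*\bigr)*\xi^*\bigr\|_1<\varepsilon$.

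This is precisely where the hypothesis that $\gen(\me^*\me\cap B_e)$ is dense in $B_e$ enters, through Lemma~\ref{lem:pre2}. The claim I would establish is that $L^1(\mb)$ admits a left approximate unit $(v_\lambda)$ each of whose terms is a finite sum $\sum_j\sigma_{\lambda,j}*\sigma_{\lambda,j}^*$ with $\sigma_{\lambda,j}\in L^1(\me^*)$; granting this, applying the approximate unit property to $\xi^*\in L^1(\mb)$ yields $v_\lambda*\xi^*\to\xi^*$ and the proof is complete. To produce such a $(v_\lambda)$: by (the proof of) Lemma~\ref{lem:pre2} the direct sum $E:=\bigoplus_{t\in G}E_t$ is a full right Hilbert $B_e$-module, so $B_e$ has an approximate unit lying in $D_\me=\{\sum_i c_i^*c_i:c_i\in\me\}$; combining such an approximate unit of $B_e$ with a compactly supported approximate unit of $L^1(G)$ concentrated near the identity, one gets in the standard way an approximate unit of $L^1(\mb)$ supported near the identity, and, since each generator $c^*c$ (with $c\in E_s$) of an element of $D_\me$ equals $(\sigma*\sigma^*)(e)$ for $\sigma$ a continuous section of $\me^*$ with $\sigma(s^{-1})=c^*$ times a suitably normalized bump supported near $s^{-1}$, the terms of that approximate unit can be arranged to have the required shape.

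The main obstacle is exactly this last construction: fabricating a genuine approximate unit for $L^1(\mb)$ all of whose members stay of the form $\sum_j\sigma_j*\sigma_j^*$ with $\sigma_j\in L^1(\me^*)$. That is the only step where local compactness of $G$, the inclusion $\me\me^*\subseteq\ma$ (needed to keep each $\eta_j=\xi*\sigma_j$ inside $L^1(\ma)$), and the density of $\gen(\me^*\me\cap B_e)$ in $B_e$ (through Lemma~\ref{lem:pre2}) are genuinely used; everything else is bookkeeping with the convolution and involution of $L^1(\mb)$.
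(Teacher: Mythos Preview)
Your reduction to finding $v=\sum_j\sigma_j*\sigma_j^*$ with $\sigma_j\in L^1(\me^*)$ and $\|v*\xi^*-\xi^*\|_1$ small is correct, and the bookkeeping that puts $\eta_j=\xi*\sigma_j$ inside $L^1(\ma)$ is fine. But the route is genuinely different from the paper's, and the step you yourself flag as the main obstacle is precisely the one the paper avoids.

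The paper never builds a \emph{convolution} approximate unit for $L^1(\mb)$ of the shape $\sum_j\sigma_j*\sigma_j^*$. Instead it exploits the \emph{pointwise} right $B_e$-module structure on sections: for $\xi\in C_c(\me)$ one finds $d\in D_\me\subseteq B_e$ (via Lemma~\ref{lem:pre2} applied to the nondegenerate $B_e$-module $C_0(\me)$) with $\|\xi-\xi d\|_1$ small, where $(\xi d)(t)=\xi(t)d$. Writing $d=\sum_j u_j^*u_j$ with $u_j\in\me$, one then has the algebraic identity
\[
(\xi d)*\xi^*=\sum_j(\xi u_j^*)*(\xi u_j^*)^*,
\]
and $\xi u_j^*\in C_c(\ma)$ because $\me\me^*\subseteq\ma$. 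So the paper's $\eta_j$ are pointwise products of $\xi$ by single bundle elements $u_j^*$, not convolutions with $L^1$-sections; the approximate-identity work happens entirely in the fibre $B_e$, where Lemma~\ref{lem:pre2} gives it for free.

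What this buys is exactly the elimination of your obstacle: you are trying to manufacture, from an approximate unit of $B_e$ lying in $D_\me$ and a bump approximate unit of $L^1(G)$, sections $\sigma_j$ whose convolution squares $\sigma_j*\sigma_j^*$ add up to an honest approximate unit of $L^1(\mb)$. Your sketch only controls $(\sigma*\sigma^*)(e)$, not the full $L^1$-norm of $\sigma*\sigma^*-f\cdot c^*c$; making that uniform in $t$ (coordinating the width of the bump near $s^{-1}$, the continuity of the extending section, the modular function, and the prescribed shape $f$ near $e$) is doable but is a nontrivial piece of analysis that you have not carried out. The paper's pointwise-multiplication trick short-circuits all of it: it replaces your convolution identity $\sum_j(\xi*\sigma_j)*(\xi*\sigma_j)^*=\xi*v*\xi^*$ by the fibrewise identity above, so that only an approximate unit \emph{in $B_e$} is needed, and that is exactly what Lemma~\ref{lem:pre2} supplies.
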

\begin{proof}
  We will suppose that $\xi \in C_c(\me),$ which is clearly
  enough. Since $C_0(\me)$ is a nondegenerate right Banach
  $B_e$-module, given a positive integer $n$ there exists $b_n\in B_e$ 
  such that $\|\xi - \xi b_n\|<1/n$ and $0\leq b_n\leq 1.$ Then we can 
  find $c_n\in D_\me$ such that $\|b_n^{1/2}-b_n^{1/2}c_n\|<1/n.$ Set
  $d_n:= b_n^{1/2}c_n b_n^{1/2}$ and note that $d_n\in D_\me$ because
  $\me$ is a right ideal. The continuity of the operations imply $\|
  b_n - d_n \| \to 0$ and $\| \xi - \xi d_n \|_1\to 0.$ Thus
  $\|\xi*\xi^* - \xi d_n*\xi^*\|_1\to 0.$ 
  \par Now for every $n$ there exist $u_1,\ldots, u_{m_n}\in \me$ such that
  $d_n=\sum_{j=1}^{m_n} {u_j}^* u_j.$ Thus $\xi d_n * \xi^* =
  \sum_{j=1}^{m_n} (\xi {u_j}^* u_j)*\xi^*=\sum_{j=1}^{m_n} (\xi
  {u_j}^*)*(\xi {u_j}^*)^*$ and, as $\me$ is a right ideal, $\xi
  {u_j}^*\in C_c(\mathcal{A}).$ This completes the proof.  
 \end{proof}

\begin{corollary}\label{cor:uff!}
Under the assumptions of Lemma~\ref{lem:2}, let
$\norm{\ }_\ma:L^1(\ma)\to [0,\infty)$ be the maximal $C^*$-norm 
of $L^1(\ma)$. Then $L^1(\me)\times L^1(\me)\to L^1(\ma)$ given by
$(\xi,\eta)\mapsto \xi*\eta^*$ is an inner product. 
\end{corollary}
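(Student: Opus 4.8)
The plan is to reduce everything to the one substantive point — positivity of $\xi*\xi^*$ in $L^1(\ma)$ — and to extract that directly from Lemma~\ref{lem:2}; the remaining axioms are routine identities in the Banach $*$-algebra $L^1(\mb)$.

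First I would check that $\pr{\xi}{\eta}:=\xi*\eta^*$ actually takes values in $L^1(\ma)$. For $\xi,\eta\in C_c(\me)$ the value $(\xi*\eta^*)(t)$ is, up to the modular function, an integral of products $\xi(s)\,\eta(t^{-1}s)^*\in E_sE_{t^{-1}s}^*\subseteq A_t$, the inclusion being precisely the hypothesis $\me\me^*\subseteq\ma$; density of $C_c(\me)$ in $L^1(\me)$ together with continuity of convolution then gives $\xi*\eta^*\in L^1(\ma)$ for all $\xi,\eta\in L^1(\me)$. Sesquilinearity (linearity in the first variable, conjugate linearity in the second) is immediate since $\eta\mapsto\eta^*$ is conjugate linear, and $\pr{\eta}{\xi}=\eta*\xi^*=(\xi*\eta^*)^*=\pr{\xi}{\eta}^*$ because $*$ is the involution of $L^1(\mb)$ and $L^1(\ma)$ is a $*$-subalgebra; compatibility of $\pr{\,}{\,}$ with the module structure is just associativity of convolution, and definiteness, if wanted, follows from faithfulness of the left regular representation of $L^1(\mb)$. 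So the proof reduces to showing that $\pr{\xi}{\xi}=\xi*\xi^*$ is positive in $L^1(\ma)$ with respect to the maximal $C^*$-norm $\norm{\,}_\ma$; by the remarks following Definition~\ref{defn:pos*alg} this is equivalent to $\iota(\xi*\xi^*)\in C^*(\ma)^+$, where $\iota\colon L^1(\ma)\to C^*(\ma)$ is the canonical map into the enveloping $C^*$-algebra, which is exactly the completion of $(L^1(\ma),\norm{\,}_\ma)$.

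For this last point I would invoke Lemma~\ref{lem:2} with no further work. Given $\xi\in L^1(\me)$, that lemma yields finite sums $s_n=\sum_{j}\eta_j^{(n)}*(\eta_j^{(n)})^*$ with $\eta_j^{(n)}\in L^1(\ma)$ and $\norm{\xi*\xi^*-s_n}_1\to 0$. Since $\norm{\,}_\ma\le\norm{\,}_1$, the map $\iota$ is contractive for the $L^1$-norm, hence $\iota(s_n)\to\iota(\xi*\xi^*)$ in $C^*(\ma)$. Each summand $\iota(\eta_j^{(n)}*(\eta_j^{(n)})^*)=\iota(\eta_j^{(n)})\,\iota(\eta_j^{(n)})^*$ is positive in $C^*(\ma)$, so every $\iota(s_n)$ is positive, and since $C^*(\ma)^+$ is norm-closed we conclude $\iota(\xi*\xi^*)\in C^*(\ma)^+$, which is what was needed.

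In effect the entire weight of the argument has already been carried by Lemmas~\ref{lem:pre2} and \ref{lem:2}: the hypothesis $\me\me^*\subseteq\ma$ is used only to place $\xi*\eta^*$ inside $L^1(\ma)$, while $\ma\subseteq\me$ and the density of $\gen(\me^*\me\cap B_e)$ in $B_e$ serve only to make Lemma~\ref{lem:2} applicable. I do not anticipate any real obstacle here; the only step that deserves a moment's care is the passage from $L^1$-approximation to positivity in the enveloping $C^*$-algebra, and that is disposed of by contractivity of $\iota$ and closedness of the positive cone.
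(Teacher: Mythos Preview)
Your proposal is correct and is exactly the argument the paper intends: the corollary is stated without proof because positivity of $\xi*\xi^*$ is the immediate consequence of Lemma~\ref{lem:2} via approximation by sums $\sum_j\eta_j*\eta_j^*$ with $\eta_j\in L^1(\ma)$, passage to $C^*(\ma)$, and closedness of the positive cone, while the remaining inner-product axioms are routine $*$-algebra identities. You have simply written out what the paper leaves implicit.
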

\begin{corollary}\label{cor:uff!2}
Under the assumptions of Lemma~\ref{lem:2}, the map $\varphi:
\msn(L^1(\mb))\to \msn(L^1(\mb))$ given by
$\beta\mapsto\beta|_{L^1(\ma)}$ is an isomorphism of partially ordered
sets that sends the maximal and reduced norms on $L^1(\mb)$ to the
maximal and reduced norms on $L^1(\ma)$ respectively, and such that
$\varphi(\mn(L^1(\mb)))=\mn(L^1(\ma))$. Moreover, the Hausdorff
completions of $L^1(\mb)$ and $L^1(\ma)$ with respect to $\beta$ and
$\varphi(\beta)$ respectively are Morita-Rieffel equivalent.
\end{corollary}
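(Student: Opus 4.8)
This corollary is an instance of Proposition~\ref{prop:typ}, applied to the Banach $*$-algebra $B:=L^1(\mb)$ and the subspace $E:=L^1(\me)$; the plan is to verify the hypotheses of that proposition, identify the $*$-algebra it produces, and then read off the extra information about the maximal and reduced norms. The algebra $L^1(\mb)$ is admissible, being $C^*$-closable (e.g.\ via its regular representation), and, since $\me$ is a right ideal of $\mb$ with closed fibres, $L^1(\me)$ is a closed right ideal of $L^1(\mb)$. The remaining hypothesis — that $\gen\{\xi^{*}*\eta:\xi,\eta\in L^1(\me)\}$ be dense in $L^1(\mb)$ — follows from the assumed density of $\gen(\me^{*}\me\cap B_e)$ in $B_e$ by a routine argument with approximate identities (using Lemma~\ref{lem:pre2} together with the fact that $\me$ is a right ideal, so that $\gen\{\xi^{*}*\eta\}$ is a twosided ideal of $L^1(\mb)$ whose closure already contains enough of $B_e$).

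Next I would identify the $*$-algebra $A:=\cgen\{\xi*\eta^{*}:\xi,\eta\in L^1(\me)\}$ attached to $B$ and $E$ by Proposition~\ref{prop:typ}. Since $\me\me^{*}\subseteq\ma$ we get $A\subseteq L^1(\ma)$. For the reverse inclusion, Lemma~\ref{lem:2} gives $\xi*\xi^{*}\in\cgen\{\eta*\eta^{*}:\eta\in L^1(\ma)\}$ for every $\xi\in L^1(\me)$, so the polarization identity $\xi*\eta^{*}=\tfrac14\sum_{k=0}^{3}i^{k}(\xi+i^{k}\eta)*(\xi+i^{k}\eta)^{*}$ forces $A=\cgen\{\eta*\zeta^{*}:\eta,\zeta\in L^1(\ma)\}$, and the latter equals $L^1(\ma)$ (every element of $L^1(\ma)$ is of the form $\eta*\zeta^{*}$, by the Cohen factorization theorem). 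Hence $A=L^1(\ma)$. The positivity hypothesis of Proposition~\ref{prop:typ}, that $\xi*\xi^{*}\in A^{+}=L^1(\ma)^{+}$ for all $\xi\in L^1(\me)$, is exactly Corollary~\ref{cor:uff!}: it gives positivity of $\xi*\xi^{*}$ in the maximal $C^*$-completion $C^*(L^1(\ma))$, and positivity in the Banach $*$-algebra $L^1(\ma)$ means positivity in its enveloping $C^*$-algebra. Proposition~\ref{prop:typ} now yields all of the corollary except the claim on reduced norms: $\varphi\colon\msn(L^1(\mb))\to\msn(L^1(\ma))$, $\beta\mapsto\beta|_{L^1(\ma)}$, is a lattice isomorphism with $\varphi(\mn(L^1(\mb)))=\mn(L^1(\ma))$, the Hausdorff completions at $\beta$ and at $\varphi(\beta)$ are Morita-Rieffel equivalent, and, $\varphi$ being an order isomorphism, it sends the maximal norm of $L^1(\mb)$ to that of $L^1(\ma)$.

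The remaining point — that $\varphi$ carries the reduced norm $\gamma_0$ of $L^1(\mb)$ to the reduced norm of $L^1(\ma)$ — is where I expect the main difficulty to lie. Let $\Lambda\colon L^1(\mb)\to\mathcal L(X)$ be the regular representation, acting by left convolution on the Hilbert $B_e$-module $X$ obtained by completing $C_c(\mb)$ in the norm $\xi\mapsto\norm{(\xi^{*}*\xi)(e)}^{1/2}$, and let $Y\subseteq X$ be the closure of $C_c(\me)$. Since $\me$ is a right ideal, left convolution by an element of $C_c(\ma)$ carries $C_c(\mb)$ into $C_c(\me)$; hence, by continuity, $\Lambda(f)$ maps $X$ into $Y$ for every $f\in L^1(\ma)$, and, applying this to $f^{*}$ and taking adjoints, $\Lambda(f)$ annihilates $Y^{\perp}$. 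Therefore $\varphi(\gamma_0)(f)=\norm{\Lambda(f)|_{Y}}$ for $f\in L^1(\ma)$, and the crux is to identify the $L^1(\ma)$-representation $f\mapsto\Lambda(f)|_{Y}$ with (an amplification of) the regular representation of $L^1(\ma)$ — equivalently, to show that the Morita equivalence implemented by the completion of $L^1(\me)$ intertwines the regular representations of $\mb$ and of $\ma$. Granting this compatibility — the main obstacle — $\varphi(\gamma_0)$ is the reduced norm of $L^1(\ma)$, and the proof is complete.
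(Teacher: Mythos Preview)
Your overall strategy is exactly the paper's: apply Proposition~\ref{prop:typ} with $B=L^1(\mb)$, $E=L^1(\me)$, use Corollary~\ref{cor:uff!} for the positivity hypothesis, and treat the reduced norms separately. The paper's own proof is a single sentence: everything except the reduced-norm claim is taken as already established (implicitly by Proposition~\ref{prop:typ} and Corollary~\ref{cor:uff!}), and the reduced-norm correspondence is simply attributed to \cite{flau}. So your sketch of the regular-representation argument, which you correctly flag as the main obstacle, is not carried out in the paper either; it is exactly what \cite{flau} supplies.

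One local point deserves tightening. In your identification $A=L^1(\ma)$, the invocation of Lemma~\ref{lem:2} is in the wrong direction and is in any case unnecessary: Lemma~\ref{lem:2} says $\xi*\xi^*$ (with $\xi\in L^1(\me)$) is approximable by sums $\sum_j\eta_j*\eta_j^*$ with $\eta_j\in L^1(\ma)$, which only re-proves $A\subseteq L^1(\ma)$, a containment you already had from $\me\me^*\subseteq\ma$. The reverse inclusion $L^1(\ma)\subseteq A$ follows immediately from the hypothesis $\ma\subseteq\me$, which gives $L^1(\ma)\subseteq L^1(\me)$ and hence $\cgen\{\eta*\zeta^*:\eta,\zeta\in L^1(\ma)\}\subseteq A$; then Cohen factorization (using an approximate identity of $L^1(\ma)$) identifies the left-hand side with $L^1(\ma)$. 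Once you make this small correction, your argument matches the paper's intended route.
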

\begin{proof}
We only have to prove the correspondence between the reduced
$C^*$-norms, but this is the content of \cite{flau}.  
\end{proof}
 
\subsection{Tensor products of \texorpdfstring{$C^*$}{C*}-trings}\label{sec:tenstrings}
In the present section we apply the previous results to the study of
tensor products of \cts.
Maximal and minimal tensor product for TROs were constructed in \cite{karu} using linking algebras, but we define tensor products of \cts $E$ and $F$ using the tensor products of $E^r$ and $F^r$.
The main result is Theorem~\ref{thm:main1}.
\par From now on the algebraic tensor product of the 
$\C$-vector spaces $E_1,\ldots ,E_n$ will be denoted by
$E_1\bigodot\ldots\bigodot E_n$, or just by
$\bigodot_{j=1}^nE_j$. 
Let $E_{ij}$, $F_i$ be complex vector spaces, $\forall i=1,\ldots ,m$,
  $j=1,\ldots ,n$, and suppose that  
  $\al_i:\prod_{j=1}^n E_{ij}\to F_i$ is a $n$-linear map, for each 
 $i=1,\ldots ,m.$  Then it is clear that there exists a unique
  $n$-linear map $\al:= 
  \al_1\odot\cdots\odot\al_m: 
  \prod_{j=1}^n\bigodot_{i=1}^mE_{ij}\to\bigodot_{i=1}^mF_i$ 
  such that $\al (\odot_{i=1}^me_{i1},\ldots
  ,\odot_{i=1}^me_{in})=\odot_{i=1}^m\al_i(e_{i1},\ldots
  ,e_{in})$. Using this fact we have the following result, whose
  straightforward proof is left to the reader.   
\begin{proposition}\label{prop:tprod}
If $(E,\mu )$, $(F,\nu )$ are $*$-trings, then $(\bats{E}{F}
,\mu\odot\nu )$ is also a $*$-tring.
Furthermore, if $(E,A,\pr{\,}{\,}_A)$ and $(F,B,\pr{\,}{\,}_B),$ are full basic triples associated to $(E,\mu )$ and $(F,\nu ),$ respectively, then $(\bats{E}{F},\bats{A}{B},\pr{\,}{\,}_A\odot \pr{\,}{\,}_B)$ is a full basic triple associated to $(\bats{E}{F},\mu\odot\nu ).$ 
\end{proposition}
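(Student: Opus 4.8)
The plan is to verify the two assertions in order, treating the first as essentially bookkeeping about multilinear maps and the second as an identification of structures under the canonical isomorphisms of tensor products. For the first statement, I would take the product $\mu\odot\nu$ (which, in the notation of the preamble to Proposition~\ref{prop:tprod}, is the map $\al_1\odot\al_2$ with $m=2$, $n=3$, $E_{11}=E_{12}=E_{13}=E$, $E_{21}=E_{22}=E_{23}=F$, $F_1=E$, $F_2=F$, $\al_1=\mu$, $\al_2=\nu$) and check it is a $*$-ternary product on $\bats{E}{F}$. Linearity in the odd variables and conjugate-linearity in the second are inherited directly from the corresponding properties of $\mu$ and $\nu$, since $\mu\odot\nu$ is defined on elementary tensors by $(\mu\odot\nu)(e_1\odot f_1, e_2\odot f_2, e_3\odot f_3)=\mu(e_1,e_2,e_3)\odot\nu(f_1,f_2,f_3)$ and extended multilinearly. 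The associativity-type identities of Definition~\ref{defn:1} then need only be checked on elementary tensors, where they follow by applying the identities for $\mu$ in the first leg and for $\nu$ in the second leg independently; since both sides of each identity are multilinear in the five arguments, agreement on elementary tensors gives agreement everywhere.

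For the second statement, I would first recall that for full basic triples the associated ternary product is $(x,y,z)\mapsto x\pr{y}{z}_A$ on $E$ and $(x,y,z)\mapsto x\pr{y}{z}_B$ on $F$. The claim is that $(\bats{E}{F},\bats{A}{B},\pr{\,}{\,}_A\odot\pr{\,}{\,}_B)$ is a right basic triple in the sense of Definition~\ref{defn:bt} whose associated $*$-tring is $(\bats{E}{F},\mu\odot\nu)$. The module structure of $\bats{E}{F}$ over $\bats{A}{B}$ is the obvious one, $(x\odot y)(a\odot b)=(xa)\odot(yb)$, which is well-defined because it is the tensor product of the module actions $E\times A\to E$ and $F\times B\to F$; associativity and the module axioms pass to the tensor product by multilinearity. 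The map $\pr{\,}{\,}_A\odot\pr{\,}{\,}_B:\prod_{j=1}^2\bats{E}{F}\to\bats{A}{B}$ is, by the multilinear construction in the proposition's preamble (now with $m=2$, $n=2$), the unique sesquilinear map with $\pr{x_1\odot y_1}{x_2\odot y_2}=\pr{x_1}{x_2}_A\odot\pr{y_1}{y_2}_B$; I would check the three conditions of Example~\ref{exmp:0}: sesquilinearity is built in, $\pr{u}{v\cdot c}=\pr{u}{v}c$ for $c\in\bats{A}{B}$ follows on elementary tensors from the corresponding identities for $\pr{\,}{\,}_A$ and $\pr{\,}{\,}_B$, and the $*$-relation $\pr{u}{v}^*=\pr{v}{u}$ follows similarly once one recalls that the involution on $\bats{A}{B}$ is $(a\odot b)^*=a^*\odot b^*$. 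Fullness is immediate: $\textrm{span}\{\pr{x_1}{x_2}_A:x_i\in E\}=A$ and the analogous statement for $B$ give that the span of $\pr{x_1}{x_2}_A\odot\pr{y_1}{y_2}_B$ equals $A\odot B=\bats{A}{B}$.

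Finally, I would check compatibility: the ternary product associated to the triple $(\bats{E}{F},\bats{A}{B},\pr{\,}{\,}_A\odot\pr{\,}{\,}_B)$ is $(u,v,w)\mapsto u\cdot\pr{v}{w}$, and on elementary tensors this is $(x_1\odot y_1)\bigl(\pr{x_2}{x_3}_A\odot\pr{y_2}{y_3}_B\bigr)=\bigl(x_1\pr{x_2}{x_3}_A\bigr)\odot\bigl(y_1\pr{y_2}{y_3}_B\bigr)=\mu(x_1,x_2,x_3)\odot\nu(y_1,y_2,y_3)=(\mu\odot\nu)(x_1\odot y_1,x_2\odot y_2,x_3\odot y_3)$, and by multilinearity of both sides these two ternary products agree on all of $\bats{E}{F}$. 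The only genuine subtlety — and where I would be most careful — is the well-definedness of the tensor-product module action and of $\pr{\,}{\,}_A\odot\pr{\,}{\,}_B$ on the algebraic tensor product: one must invoke the universal property of $\odot$ for each fixed slot in turn to produce these maps, rather than define them naïvely on elementary tensors and hope. Once that is granted, every verification reduces to a one-line check on elementary tensors plus a multilinearity argument, which is exactly why the paper defers the details to the reader.
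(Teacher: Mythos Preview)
Your proposal is correct and is exactly the straightforward verification the paper has in mind when it leaves the proof to the reader: check each axiom on elementary tensors and extend by (multi/sesqui)linearity, using the universal property of the algebraic tensor product to ensure well-definedness. The one point you might make more explicit is that the preamble's construction of $\al_1\odot\cdots\odot\al_m$ is stated for $n$-linear maps, whereas $\mu$, $\nu$ and $\pr{\,}{\,}_A$, $\pr{\,}{\,}_B$ are conjugate-linear in one slot; this is handled, as you implicitly do, by passing to the conjugate space in that slot (using $\overline{E}\odot\overline{F}\cong\overline{E\odot F}$), after which the construction applies verbatim.
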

\begin{definition}\label{defn:norms}
A $C^*$-tensor product of two $*$-trings $(E,\mu ,\norm{\cdot})$ and 
$(F,\nu ,\norm{\cdot})$ is a completion of the corresponding algebraic
tensor product $(\bats{E}{F},\mu\odot\nu )$ with respect to a 
$C^*$-norm. If $\gamma$ is such a $C^*$-norm, we denote by 
\bts{E}{\gamma}{F} the corresponding $C^*$-tensor product.
\end{definition}
\begin{definition}\label{defn:nuclearct}
We say that a \ct $E$ is nuclear if for every \ct $F$ there exists
just one $C^*$-tensor product \bts{E}{}{F}.
\end{definition}
\par We will see next that $\msn(\bats{E}{F})=\msn(\bats{E_p}{F})$,
which implies, in particular, that a $C^*$-tring $E$ is 
nuclear if and only if $E_p$ is nuclear. 
\begin{proposition}\label{prop:ds}
Let $E$ be a $*$-tring, and $F_1,F_2$ ideals of $E$ such that
$E=F_1\oplus 
F_2$. If $\ga\in\msn(E)$, and $x=y+z$, with $y\in F_1$ and $z\in F_2$,
then $\ga(x)=\max\{\ga(y),\ga(z)\}$.  
\end{proposition}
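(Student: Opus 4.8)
The plan is to deduce the equality from a computation inside a single $C^*$-algebra, namely the algebra $E_\gamma^r$ attached to the Hausdorff completion $E_\gamma$ of $(E,\gamma)$. The key algebraic observation is that the vector-space splitting $E=F_1\oplus F_2$ is ``orthogonal'' for the ternary product: since $F_1$ and $F_2$ are ideals we have $(F_i,E,E)\subseteq F_i$, $(E,F_i,E)\subseteq F_i$ and $(E,E,F_i)\subseteq F_i$, so any ternary product in $E$ having one factor in $F_1$ and another in $F_2$ lies in $F_1\cap F_2=\{0\}$. In particular, for $y\in F_1$ and $z\in F_2$ this yields the two identities $\mu(y,y,z)=0$ and $\mu(y,z,z)=0$ (both belong to $F_1\cap F_2$), and these are all that will be used.

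Next I would pass to $E_\gamma$, a $C^*$-tring, with its canonical $*$-tring \hm $q:E\to E_\gamma$ and norm $\bar\gamma$ satisfying $\bar\gamma(q(v))=\gamma(v)$. Being a $C^*$-tring, $E_\gamma$ is admissible, so by Corollary~\ref{cor:cbt} it carries the full $C^*$-basic triple $(E_\gamma,E_\gamma^r,\pr{\,}{\,}_r)$, whose ternary product is $\mu(u,v,w)=u\pr{v}{w}_r$, and by \eqref{eq:phi.inj} one has $\gamma(v)^2=\norm{\pr{q(v)}{q(v)}_r}$, the norm being that of the $C^*$-algebra $E_\gamma^r$. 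Writing $x=y+z$ and putting $a:=\pr{q(y)}{q(y)}_r$, $b:=\pr{q(z)}{q(z)}_r$ and $c:=\pr{q(y)}{q(z)}_r$ in $E_\gamma^r$, the elements $a,b$ are self-adjoint with $\norm a=\gamma(y)^2$, $\norm b=\gamma(z)^2$, and $c^*=\pr{q(z)}{q(y)}_r$. Using that $q$ preserves ternary products, together with $\pr{u}{v}_r\,d=\pr{u}{vd}_r$ and the two vanishing identities above, I would compute $q(y)\,b=\mu(q(y),q(z),q(z))=q(\mu(y,z,z))=0$, whence $ab=\pr{q(y)}{q(y)b}_r=0$, and likewise $c^*c=\pr{q(z)}{q(y)}_r\,c=\pr{q(z)}{\mu(q(y),q(y),q(z))}_r=\pr{q(z)}{q(\mu(y,y,z))}_r=0$, whence $c=0$. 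Hence $\pr{q(x)}{q(x)}_r=a+c+c^*+b=a+b$. Since $a$ and $b$ are self-adjoint with $ab=0$ (so also $ba=(ab)^*=0$), they commute, and inside the commutative $C^*$-subalgebra they generate they are real functions with disjoint supports, so $\norm{a+b}=\max\{\norm a,\norm b\}$. Therefore $\gamma(x)^2=\norm{\pr{q(x)}{q(x)}_r}=\max\{\norm a,\norm b\}=\max\{\gamma(y)^2,\gamma(z)^2\}$, and since all terms are nonnegative this gives $\gamma(x)=\max\{\gamma(y),\gamma(z)\}$.

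I do not anticipate a genuine obstacle, but two points need care. First, it is tempting to argue instead that $E$ is a direct sum of sub-$*$-trings and that a $C^*$-seminorm of a direct sum is the maximum of the summands; this is in essence what is going on — one can even check that $N_\gamma=(N_\gamma\cap F_1)\oplus(N_\gamma\cap F_2)$, so the splitting descends to $E/N_\gamma$ and to $E_\gamma$ — but to make it rigorous this way one would first have to show that the closures of $q(F_1)$ and $q(F_2)$ in $E_\gamma$ meet only in $0$ before realizing $E_\gamma$ as a genuine direct sum of $C^*$-trings and appealing to the isometry of injective \hms of $C^*$-trings (Proposition~\ref{prop:cbt2}); establishing that trivial intersection takes essentially the same effort, so it is cleaner to channel everything through the single element $\pr{q(x)}{q(x)}_r$. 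Second, $\pr{v}{v}_r$ need not be a positive element of $E_\gamma^r$ (it is negative when $v$ lies in the negative part of $E_\gamma$), so the last step has to be stated for self-adjoint, rather than positive, elements — which is harmless, since that is exactly the generality in which it holds.
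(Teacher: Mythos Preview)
Your argument is correct, but it follows a genuinely different route from the paper's. The paper gives a direct, elementary proof using only the $C^*$-seminorm axioms: for the inequality $\gamma(x)\geq\max\{\gamma(y),\gamma(z)\}$ it uses $\gamma(x)=\sup\{\gamma((x,u,u)):\gamma(u)\le 1\}$ (taking $u$ proportional to $y$ or to $z$ and exploiting $(F_1,F_2,E)=0$), and for the reverse inequality it iterates the cube identity, setting $x_n:=(x_{n-1},x_{n-1},x_{n-1})$ so that $\gamma(x_n)=\gamma(x)^{3^n}$ and $x_n=y_n+z_n$ by orthogonality, whence $\gamma(x)\le(\gamma(y)^{3^n}+\gamma(z)^{3^n})^{1/3^n}\to\max\{\gamma(y),\gamma(z)\}$. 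No completions, no associated $C^*$-algebra.

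Your approach instead pushes everything into the $C^*$-algebra $E_\gamma^r$ via the canonical map $q:E\to E_\gamma$ and the identity $\gamma(v)^2=\norm{\pr{q(v)}{q(v)}_r}$, then reduces the question to the standard fact that two self-adjoint elements $a,b$ with $ab=0$ satisfy $\norm{a+b}=\max\{\norm a,\norm b\}$. The orthogonality computations ($ab=0$, $c=0$) are carried out carefully and are correct, and your remark that $\pr{q(v)}{q(v)}_r$ is only self-adjoint (not necessarily positive) is well taken and handled properly. What the paper's proof buys is complete self-containment: it needs nothing beyond Definition~\ref{defn:cnorm}. What your proof buys is conceptual transparency: once the machinery of Sections~3--4 is in place, the result becomes an immediate consequence of a familiar $C^*$-algebra fact, with no limiting argument required.
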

\begin{proof}
Since $\ga(x)=\sup\{\ga((x,u,u)):\,u\in E,\ga(u)\leq 1\}$, it follows
that $\ga(x)\geq\ga(y)$ and $\ga(x)\geq\ga(z)$, so
$\ga(x)\geq\max\{\ga(y),\ga(z)\}$. To prove the converse
inequality, let us first introduce the following notation. For $u\in
E$ let $u_0:=z$, $u_n:=(u_{n-1},u_{n-1},u_{n-1})$ if $n\geq 1$. Then
we have that $\ga(u_n)=\ga(u_{n-1})^3$, $\forall n\geq 1$, so
$\ga(u_n)=\ga(u)^{3^n}$, $\forall n\geq 0$. Since $(E,F_1,F_2)=0$, it
follows that $x_n=y_n+z_n$. Thus:
$\ga(x)=\ga(x_n)^{1/3^n}=\ga(y_n+z_n)^{1/3^n}\leq
(\ga(y_n)+\ga(z_n))^{1/3^n}=(\ga(y)^{3^n}+\ga(z)^{3^n})^{1/3^n}
\stackrel{n}{\to}\max\{\ga(y),\ga(z)\}$, whence $\ga(x)\leq
\max\{\ga(y),\ga(z)\}$.     
\end{proof}

\begin{corollary}\label{cor:E}
Let $E$ and $F$ be \cts. Then
$\msn(\bats{E}{F})=\msn(\bats{E_p}{F})$ and
$\mn(\bats{E}{F})=\mn(\bats{E_p}{F})$. Consequently a \ct $E$ is
nuclear if and only if $E_p$ is nuclear. 
\end{corollary}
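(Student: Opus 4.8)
The strategy is to combine Proposition~\ref{prop:ds} with the observation that a seminorm satisfies the two conditions of Definition~\ref{defn:cnorm} for a ternary product $\mu$ exactly when it does for $-\mu$.

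First I would use the fundamental decomposition $E=E_{+}\oplus E_{-}$ and the fact that $\bigodot$ distributes over direct sums to write, as $*$-trings,
\[
  \bats{E}{F}=(\bats{E_{+}}{F})\oplus(\bats{E_{-}}{F}),
  \qquad
  \bats{E_{p}}{F}=(\bats{E_{+}}{F})\oplus(\bats{E_{-}}{F})^{\text{op}}.
\]
Here $E_{+}$ and $E_{-}$ are ideals of $E$ whose sum is direct, so any ternary product of $E$ with arguments taken from both summands lies in $E_{+}\cap E_{-}=0$; consequently in $\bats{E}{F}$ every ternary product with arguments spread over $\bats{E_{+}}{F}$ and $\bats{E_{-}}{F}$ vanishes (it vanishes on simple tensors, hence everywhere by multilinearity), so the two pieces are ideals of $\bats{E}{F}$ with vanishing mutual ternary products. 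The same computation applies to $\bats{E_{p}}{F}$, and since $\bats{E_{-}^{\text{op}}}{F}=(\bats{E_{-}}{F})^{\text{op}}$, the $*$-trings $\bats{E}{F}$ and $\bats{E_{p}}{F}$ have one common underlying vector space $V$ and one common pair of distinguished subspaces, differing only in that the ternary product on the $\bats{E_{-}}{F}$-summand has been negated.

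Next I would record the following description of $C^{*}$-seminorms on a direct sum $G=G_{1}\oplus G_{2}$ of $*$-trings with vanishing mutual ternary products: restriction yields a bijection $\msn(G)\to\msn(G_{1})\times\msn(G_{2})$, whose inverse sends $(\gamma_{1},\gamma_{2})$ to the seminorm $\gamma$ with $\gamma(y_{1}+y_{2})=\max\{\gamma_{1}(y_{1}),\gamma_{2}(y_{2})\}$. That restrictions of a $C^{*}$-seminorm are $C^{*}$-seminorms and that any $C^{*}$-seminorm on $G$ is recovered as such a maximum is precisely the content of Proposition~\ref{prop:ds}; that the maximum of $C^{*}$-seminorms on the summands is again a $C^{*}$-seminorm on $G$ is a routine check using $\mu_{G}(y_{1}+y_{2},y_{1}'+y_{2}',y_{1}''+y_{2}'')=\mu_{G_{1}}(y_{1},y_{1}',y_{1}'')+\mu_{G_{2}}(y_{2},y_{2}',y_{2}'')$ together with the submultiplicativity of $\max$. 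Applying this to $\bats{E}{F}$ and to $\bats{E_{p}}{F}$, and using $\msn(\bats{E_{-}}{F})=\msn((\bats{E_{-}}{F})^{\text{op}})$, both bijections have the same target $\msn(\bats{E_{+}}{F})\times\msn(\bats{E_{-}}{F})$ and are described by identical formulas on $V$; hence a seminorm on $V$ lies in $\msn(\bats{E}{F})$ iff it lies in $\msn(\bats{E_{p}}{F})$. This is the claimed equality $\msn(\bats{E}{F})=\msn(\bats{E_{p}}{F})$, and since the property of a seminorm being a norm makes no reference to the ternary product, it restricts to $\mn(\bats{E}{F})=\mn(\bats{E_{p}}{F})$.

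Finally, by Definitions~\ref{defn:norms} and~\ref{defn:nuclearct}, $E$ is nuclear iff for every \ct $F$ the set $\mn(\bats{E}{F})$ consists of a single norm, which by the equality just proved holds iff the same is true with $E$ replaced by $E_{p}$; that is, $E$ is nuclear iff $E_{p}$ is. The only delicate point in all of this is the bookkeeping in the middle two paragraphs: checking that no mixed ternary products survive in the tensor product and that the maximum construction really produces a $C^{*}$-seminorm. Both are straightforward once one uses that $E_{+}$ and $E_{-}$ are ideals of $E$ with $\pr{E_{+}}{E_{-}}_{r}=0$, so I expect no genuine obstacle.
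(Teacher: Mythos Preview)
Your proposal is correct and is precisely the argument the paper intends: the corollary is stated without proof immediately after Proposition~\ref{prop:ds}, and you have filled in exactly the details that proposition is meant to supply, together with the two easy observations that $\msn(G)=\msn(G^{\mathrm{op}})$ and that the maximum of $C^*$-seminorms on orthogonal ideal summands is again a $C^*$-seminorm. There is nothing to add.
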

\par Our aim is to prove that there is an isomorphism between
$\mn(\bats{E}{F})$ and $\mn(\bats{E^r}{F^r})$. The key step is to show
that each $C^*$-norm on $\bats{E_0^r}{F_0^r}$ has unique extension to a
$C^*$-norm on $\bats{E^r}{F^r}$. 

\begin{lemma}\label{lem:tensor product norms}
 Let $I$ and $J$ be *-ideals (not necessarily closed) of the C*-algebras $A$ and $B,$ respectively.
 Then the map $\Theta\colon \mathcal{N}(A\odot B)\to \mathcal{N}(I\odot J),$ $\gamma\mapsto \gamma|_{I\odot J},$ is an order preserving surjection.
 If, in addition, $I$ and $J$ are dense in $A$ and $B,$ respectively,
 then $\Te$ is a bijection. 
\end{lemma}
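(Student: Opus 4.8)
The plan is to reduce everything to the known theory of $C^*$-norms on tensor products of $C^*$-algebras, using Proposition~\ref{prop:extbanach} (uniqueness of extension of a $C^*$-seminorm from a dense $*$-ideal) as the principal tool, applied twice: once for the ideal $I \odot J \subseteq A \odot B$ and once for auxiliary ideals of completions. First I would observe that $I \odot J$ is a $*$-ideal of $A \odot B$: it is clearly a subspace closed under the involution, and since $I$ (resp.\ $J$) absorbs products with $A$ (resp.\ $B$), the elementary tensors show $(A \odot B)(I \odot J) \subseteq I \odot J$ and likewise on the other side, so it is a two-sided $*$-ideal. Hence restriction of a $C^*$-norm on $A \odot B$ to $I \odot J$ lands in $\mathcal{N}(I \odot J)$ (it is still submultiplicative and $C^*$; positivity of the norm is inherited), and $\Theta$ is well defined. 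Monotonicity of $\Theta$ is immediate from the definition of the order on $\msn$.

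The substantive point is surjectivity. Given $\alpha \in \mathcal{N}(I \odot J)$, let $C$ be the $C^*$-completion of $(I\odot J,\alpha)$, with canonical map $p\colon I\odot J \to C$. Compose the inclusions $I \hookrightarrow C$ and $J \hookrightarrow C$ (via $i \mapsto p(i\otimes e)$ after passing to multiplier algebras if $I,J$ are not unital — more cleanly, use that $p$ restricted to $I$ and to $J$ gives commuting $*$-homomorphisms into $M(C)$ whose ranges generate $C$) to get $*$-homomorphisms $\pi_A$ of $I$ and $\pi_B$ of $J$ into $M(C)$; then $a \mapsto \|\pi_A(a)\|$ and $b\mapsto \|\pi_B(b)\|$ are $C^*$-seminorms on $I$, $J$. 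I would instead take the more robust route: fix a faithful representation $C \hookrightarrow B(H)$; then $p$ yields a representation of the $*$-algebra $I \odot J$ on $H$, hence a pair of commuting representations of $I$ and of $J$ on $H$ with commuting ranges. By Proposition~\ref{prop:extbanach} applied with the enveloping $C^*$-algebras — or directly, since a representation of a $*$-ideal $I$ of a $C^*$-algebra $A$ extends canonically to $A$ by \cite[VI.19.11]{fd} (this is exactly the mechanism used in the proof of Proposition~\ref{prop:extbanach}) — each of these extends to a representation of $A$, resp.\ of $B$, on $H$, and the two extensions still commute (the commutant is a von Neumann algebra, so commuting with the dense $*$-ideal forces commuting with its norm closure). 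This gives a representation $\rho$ of $A \odot B$ on $H$ extending $p$, and $\gamma(t) := \|\rho(t)\|$ is a $C^*$-seminorm on $A \odot B$ restricting to $\alpha$ on $I \odot J$. Since $\alpha$ is a norm and $\rho|_{I\odot J}$ is isometric, and $I \odot J$ is an ideal of $A \odot B$, faithfulness of $\gamma$ follows: if $\gamma(t)=0$ then $\rho(t)=0$, so $\rho(t (i\otimes j))=0$ for all $i,j$, i.e.\ $\alpha(t(i\otimes j))=0$ hence $t(i\otimes j)=0$ for all $i \in I$, $j \in J$; when $I,J$ are dense this forces $t=0$ in $A \odot B$ (using that $A \odot B$ acts faithfully on itself via left multiplication, as $A$ and $B$ are faithful over themselves). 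In general, to upgrade $\gamma$ to a genuine norm one should first quotient by the (closed) annihilator of $I \odot J$, but since the bijectivity claim is only asserted under the density hypothesis this suffices. Thus $\Theta(\gamma) = \alpha$, proving surjectivity.

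For the final assertion, assume $I$ and $J$ are dense. Injectivity of $\Theta$ is now exactly Proposition~\ref{prop:extbanach}'s uniqueness clause: if $\gamma_1,\gamma_2 \in \mathcal{N}(A \odot B)$ agree on the dense $*$-ideal $I \odot J$, then each is the unique $C^*$-seminorm extension of their common restriction, so $\gamma_1 = \gamma_2$. (Concretely: a $C^*$-norm on $A\odot B$ makes left multiplication by any fixed element a bounded operator on the dense subspace $I\odot J$, so its value on that element is determined by its restriction via $\gamma(t) = \sup\{\gamma(ts) : s \in I\odot J,\ \gamma(s)\le 1\}$, cf.\ the argument in Proposition~\ref{prop:wbijnorms}.) Combined with surjectivity, $\Theta$ is a bijection, and being an order isomorphism since both $\Theta$ and $\Theta^{-1}$ — the extension map — preserve order (the extension formula above is visibly monotone).

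The main obstacle I anticipate is the bookkeeping around non-unital $I,J$ and the need for the two extended representations of $A$ and $B$ to have commuting ranges on $H$: one must argue that commutation on the dense $*$-ideals propagates to the closures, which is the von Neumann bicommutant observation above, and one must check that the extension procedure of \cite[VI.19.11]{fd} is compatible with the module actions so that the extended maps genuinely assemble into a representation of $A \odot B$ rather than merely of $A$ and $B$ separately. Everything else is routine verification of the $C^*$-identity and order preservation.
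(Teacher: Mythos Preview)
Your approach differs from the paper's and has a genuine gap at the step ``hence a pair of commuting representations of $I$ and of $J$ on $H$''. A $*$-representation of the algebraic tensor product $I\odot J$ does not automatically factor through commuting representations of $I$ and $J$: to define $\pi_I(i)$ on the dense subspace $\pi(I\odot J)H$ by $\pi_I(i)\bigl(\pi(z)\xi\bigr):=\pi(L_i z)\xi$ (where $L_i$ multiplies the first tensor leg by $i$) you must check both well-definedness and boundedness, and these come down to the inequality $\alpha(L_i z)\le \|i\|\,\alpha(z)$. That inequality is exactly what the paper proves by the elementary positivity trick
\[
\|i\|^2\,z^*z-(L_i z)^*(L_i z)=w^*w,\qquad w:=L_{(\|i\|^2-i^*i)^{1/2}}z\in I\odot J,
\]
using that $I$ is an ideal so that $(\|i\|^2-i^*i)^{1/2}x_k\in I$. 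Without this estimate you cannot land $I$ and $J$ in $M(C)$, and the appeal to \cite[VI.19.11]{fd} never gets off the ground. Your invocation of Proposition~\ref{prop:extbanach} ``with the enveloping $C^*$-algebras'' does not help as written, because $A\odot B$ is not a Banach $*$-algebra. A clean repair along your lines would be to observe that $I\odot J$ is a $*$-ideal of the Banach $*$-algebra $A\otimes_{\max}B$ and apply Proposition~\ref{prop:extbanach} \emph{there}; restricting the resulting seminorm to $A\odot B$ gives the extension directly, with no need to split into separate representations of $I$ and $J$.

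By contrast, the paper argues entirely at the level of seminorms: it proves the boundedness estimate above (and its analogue on the second leg), then applies Proposition~\ref{prop:wbijnorms} to conclude that $\delta'(c):=\sup\{\delta(cz):z\in I\odot J,\ \delta(z)\le 1\}$ is a $C^*$-seminorm on $A\odot B$ extending $\delta$. For injectivity in the dense case the paper simply quotes that $I\odot J$ is $\gamma$-dense in $A\odot B$ for every $C^*$-norm $\gamma$ \cite[Corollary~T.6.2]{w}; your operator-norm formula $\gamma(t)=\sup\{\gamma(ts):s\in I\odot J,\ \gamma(s)\le 1\}$ is the same idea, but you should cite that density fact rather than leave it implicit. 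Finally, note that your remark ``since the bijectivity claim is only asserted under the density hypothesis this suffices'' does not quite match the statement: surjectivity into $\mathcal{N}(A\odot B)$ is asserted in general, so one still has to upgrade the extended seminorm to a norm in the non-dense case.
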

\begin{proof}
  Clearly $\Theta$ is order preserving.
  Fix $\delta\in \mathcal{N}(I\odot J).$ 
 Given $a\in A$ and $z=\sum_{j=1}^n x_i\odot y_j\in I\odot J$, define
 $w:=\sum_{j=1}^n (\|a\|^2 - a^*a)^{1/2}x_i\odot
 y_j\in A\odot B.$ In case $A$ is unital it is clear that $w\in I\odot
 J.$  
  If $A$ is not unital, $I$ is an ideal of the unitization of $A,$ so
  $w\in I\odot J$ in any case. 
  Then 
  \begin{equation*}
    \|a\|^2 z^*z - (\sum_{j=1}^n ax_i\odot y_j)^*(\sum_{j=1}^n ax_i\odot y_j) 
      = w^*w\in (I\otimes_\delta J)^+
  \end{equation*}
  and $\delta(\sum_{j=1}^n ax_i\odot y_j)\leq \|a\|
  \delta(\sum_{j=1}^n x_i\odot y_j).$ Similarly, if $b\in B$, we also
  have $\delta(\sum_{j=1}^n x_i\odot by_j)\leq \|b\|
  \delta(\sum_{j=1}^n x_i\odot y_j).$ Thus $\delta((a\odot
  b)z)\leq\norm{a}\,\norm{b}\delta(z)$, $\forall a\in A$, $b\in B$ and
  $z\in I\odot J$. Therefore, according to
  \ref{prop:wbijnorms}, the map $\delta':A\odot B\to \R$ such that
  $\delta'(c):=\sup\{\delta(cz):\delta(z)\leq 1\}$ is a $C^*$-seminorm
  on $A\odot B$ that extends $\delta$.   
  In case $I$ and $J$ are dense in $A$ and $B,$ respectively, $I\odot J$ is dense in $A\odot B$ with respect to any C*-norm \cite[Corollary T.6.2]{w}.
  Thus $\Te$ is injective.
\end{proof}

\begin{proposition}\label{prop:tensor pos C*trings}
 Let $E$ and $F$ be positive C*-trings and consider the admissible full basic triples $(E,E^r_0,\pr{\,}{\,}_r^E)$ and $(F,F^r_0,\pr{\,}{\,}_r^F)$ given by Theorem~\ref{thm:*level}.
 Then the full basic triple $(\bats{E}{F},\bats{E^r_0}{F^r_0},\pr{\,}{\,}_r^E\odot \pr{\,}{\,}_r^F)$ is admissible.
 Furthermore, $\bats{E}{F}$ is positive and
 \begin{equation*}
    \msn_{cs}^{\pr{\,}{\,}_r^E\odot \pr{\,}{\,}_r^F}(\bats{E^r_0}{F^r_0})
      = \msn(\bats{E^r_0}{F^r_0})
 \end{equation*}
\end{proposition}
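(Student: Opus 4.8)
The plan is to reduce all three assertions to a single positivity fact about the $*$-algebra $\bats{E^r}{F^r}$, and then assemble them from results already established in Sections~2 and~3.

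\emph{The key lemma.} The first step is to prove that for every $w=\sum_{i=1}^n x_i\otimes f_i\in\bats{E}{F}$ the element
\[
  \pr{w}{w}:=(\pr{\,}{\,}_r^E\odot\pr{\,}{\,}_r^F)(w,w)=\sum_{i,j=1}^n\pr{x_i}{x_j}_r^E\otimes\pr{f_i}{f_j}_r^F
\]
is a finite sum of elements of the form $c^*c$ with $c\in\bats{E^r}{F^r}$ (in particular it lies in $(\bats{E^r}{F^r})^+$). This is the classical Gram-matrix argument. Since $E$ and $F$ are positive \cts, they are full right Hilbert modules over the \css $E^r$ and $F^r$ (Corollary~\ref{cor:cbt}), so the Gram matrices $[\pr{x_i}{x_j}_r^E]\in M_n(E^r)$ and $[\pr{f_i}{f_j}_r^F]\in M_n(F^r)$ are positive; indeed $\sum_{i,j}a_i^*\pr{x_i}{x_j}_r^E a_j=\pr{\sum_i x_ia_i}{\sum_j x_ja_j}_r^E\geq0$ for all $a_i$, and similarly on the $F$ side. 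Factoring $[\pr{x_i}{x_j}_r^E]=S^*S$ with $S=[s_{ki}]\in M_n(E^r)$ and $[\pr{f_i}{f_j}_r^F]=T^*T$ with $T=[t_{li}]\in M_n(F^r)$, and setting $c_{kl}:=\sum_i s_{ki}\otimes t_{li}\in\bats{E^r}{F^r}$, one obtains the purely algebraic identity
\[
  \pr{w}{w}=\sum_{i,j}\Big(\sum_k s_{ki}^*s_{kj}\Big)\otimes\Big(\sum_l t_{li}^*t_{lj}\Big)
    =\sum_{k,l}c_{kl}^*c_{kl}
\]
in $\bats{E^r}{F^r}$, which is exactly the claim.

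\emph{Assembling the statement.} The admissibility of the basic triple amounts to $\bats{E^r_0}{F^r_0}$ being admissible, which holds because $\bats{E^r}{F^r}$ carries a $C^*$-norm (e.g.\ the minimal one) and $\bats{E^r_0}{F^r_0}$ is a $*$-subalgebra; fullness is Proposition~\ref{prop:tprod}. Next, $\bats{E}{F}$ is admissible as a $*$-tring: picking faithful $C^*$-tring representations $E\subseteq B(H_2,H_1)$, $F\subseteq B(H_2',H_1')$, the operator tensor product $x\otimes f\mapsto x\otimes f$ embeds $\bats{E}{F}$ into $B(H_2\otimes H_2',H_1\otimes H_1')$, so $\bats{E}{F}$ is $C^*$-closable; hence by Proposition~\ref{prop:adfaith} and Corollary~\ref{cor:adfaith}, applied to the admissible full triple above, $\bats{E}{F}$ is faithful over $\bats{E^r_0}{F^r_0}$ and there is a canonical isomorphism $(\bats{E}{F})^r_0\cong\bats{E^r_0}{F^r_0}$ carrying $\pr{\,}{\,}_r$ to $\pr{\,}{\,}_r^E\odot\pr{\,}{\,}_r^F$. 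Now fix any $C^*$-seminorm $\delta$ on $\bats{E^r_0}{F^r_0}$; by Proposition~\ref{prop:wbijnorms} (the boundedness estimate $\delta((a\otimes b)z)\le\|a\|\,\|b\|\,\delta(z)$ from the proof of Lemma~\ref{lem:tensor product norms} uses only that the relevant Hausdorff completion is a $C^*$-algebra) $\delta$ extends to $\bats{E^r}{F^r}$, and applying that extension to the identity of the key lemma shows $p_\delta(\pr{w}{w})$ is a sum of elements $d^*d$, hence positive. Thus $\pr{w}{w}\in(\bats{E^r_0}{F^r_0})^+$ for every $w$. Via the isomorphism above this says precisely that $\bats{E}{F}$ is a right positive $(\bats{E}{F})^r_0$-module, and the symmetric argument with the left inner products gives left positivity, so $\bats{E}{F}$ is positive. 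Finally, for each $\delta\in\msn(\bats{E^r_0}{F^r_0})$ the $\delta$-positivity of all the $\pr{w}{w}$ makes $(\bats{E}{F},\pr{\,}{\,}_r^E\odot\pr{\,}{\,}_r^F)$ a right positive $(\bats{E^r_0}{F^r_0},\delta)$-module, so Proposition~\ref{prop:tilde}(2) yields the Cauchy--Schwarz inequality $\delta(\pr{w}{w'})^2\le\delta(\pr{w}{w})\,\delta(\pr{w'}{w'})$ for all $w,w'$; since the reverse inclusion is trivial, $\msn_{cs}^{\pr{\,}{\,}_r^E\odot\pr{\,}{\,}_r^F}(\bats{E^r_0}{F^r_0})=\msn(\bats{E^r_0}{F^r_0})$.

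\emph{Expected difficulty.} Essentially all the content sits in the key lemma; once $\pr{w}{w}$ is exhibited as a sum of squares in $\bats{E^r}{F^r}$, the rest is bookkeeping with the correspondences of Sections~2 and~3. Two points deserve care. The first is that one needs every $C^*$-\emph{seminorm} on $\bats{E^r_0}{F^r_0}$ to extend to $\bats{E^r}{F^r}$, not merely every $C^*$-norm as literally stated in Lemma~\ref{lem:tensor product norms}; as indicated above this follows from Proposition~\ref{prop:wbijnorms} with the same estimate. The second is the admissibility of $\bats{E}{F}$ as a $*$-tring, which requires the (standard but not purely formal) fact that the operator tensor product is injective on $\bats{E}{F}$; this is seen by noting that finitely many linearly independent operators are separated by finitely many vectors, or, equivalently, by invoking the embedding of $\bats{E}{F}$ into the exterior tensor product of the Hilbert modules $E$ and $F$.
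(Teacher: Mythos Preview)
Your proof is correct and structurally parallel to the paper's, but you make two different technical choices that are worth noting. For the positivity of $\pr{w}{w}$, the paper simply cites Lemmas~4.2 and~4.3 of Lance to obtain $\pr{w}{w}\geq 0$ in $\tmax{E^r}{F^r}$, whereas you reprove this by an explicit Gram-matrix factorization $\pr{w}{w}=\sum_{k,l}c_{kl}^*c_{kl}$ in $\bats{E^r}{F^r}$; your version is more self-contained and yields the slightly stronger conclusion that $\pr{w}{w}$ is a sum of squares already in the algebraic tensor product, which then lets you handle all $C^*$-seminorms at once (at the cost of needing the seminorm extension from Proposition~\ref{prop:wbijnorms}, as you correctly flag). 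For the implication $\pr{w}{w}=0\Rightarrow w=0$ (equivalently, the admissibility of $\bats{E}{F}$ as a $*$-tring), the paper uses the linking algebras $\link{E}$ and $\link{F}$ together with the injectivity of the algebraic tensor product of $*$-algebras, while you instead invoke the spatial TRO embedding $\bats{E}{F}\hookrightarrow B(H_2\otimes H_2',H_1\otimes H_1')$. Both routes are standard; the linking-algebra argument stays within the paper's $C^*$-algebraic framework and avoids representing on Hilbert spaces, while your approach is arguably more direct once one accepts the injectivity of the spatial tensor product of operator spaces.
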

\begin{proof}
  To simplify our notation we denote $[\, ,\,]$ the map
  $\pr{\,}{\,}_r^E\odot \pr{\,}{\,}_r^F.$ Note 
  $\bats{E^r_0}{F^r_0}$-module is admissible because it is a *-subalgebra of the C*-closable *-algebra $\bats{E^r}{F^r}.$ We will show that
  $\bats{E}{F}$ is a positive $\bats{E^r_0}{F^r_0}$-module. 
  Lemma~\ref{lem:tensor product norms} implies there is a maximal
  C*-norm on $\bats{E^r_0}{F^r_0}$, namely the restriction of the maximal
  C*-norm of $\bats{E^r}{F^r}.$ 
  The comments preceding Lemma~\ref{lem:pos} imply that, to show $\bats{E}{F}$ is positive, it suffices to prove that $[u,u]\geq 0$ in the maximal tensor product $\tmax{E^r}{F^r}.$
  Given $u=\sum_{j=1}^n x_j\otimes y_j\in \bats{E}{F}$ we have 
  \begin{equation*}
    [u,u]
      =\sum_{j,k=1}^n \pr{x_j}{x_k}^E_r\odot \pr{y_j}{y_k}^r_F.
  \end{equation*}
  Then Lemmas 4.2 and 4.3 of \cite{l} give the desired result.
  
  To show $[u,u]=0$ implies $u=0$ we use the linking algebras $\link{E}$ and $\link{F}$ and the linear maps
  \begin{align*}
    \alpha \colon \bats{E}{F}\to \bats{\link{E}}{\link{F}},
       \ x\odot y\mapsto \bigl( \begin{smallmatrix}0&x\\0&0\end{smallmatrix}\bigr) \odot \bigl( \begin{smallmatrix}0&y\\0&0\end{smallmatrix}\bigr),\\
    \beta \colon \bats{\link{E}}{\link{F}} \to \bats{E}{F},\
       \bigl( \begin{smallmatrix}x_{11}&x_{12}\\x_{21}&x_{22}\end{smallmatrix}\bigr) \odot \bigl( \begin{smallmatrix}y_{11}&y_{12}\\y_{21}&y_{22}\end{smallmatrix}\bigr) \mapsto x_{12}\odot y_{12}, \\
    \gamma  \colon \bats{E}{F} \to \bats{\link{E}}{\link{F}},
       \ a\odot b \mapsto \bigl( \begin{smallmatrix}0&0\\0&a\end{smallmatrix}\bigr) \odot \bigl( \begin{smallmatrix}0&0\\0&b\end{smallmatrix}\bigr).
  \end{align*}
  Then $\alpha(u)^*\alpha(u)=\gamma([u,u])=0,$ so $\alpha(u)=0$ and $u=\beta(\alpha(u))=0.$
\end{proof}

\begin{theorem}\label{thm:main1}
Let $E$ and $F$ be $C^*$-ternary rings. Then every set among the
partially ordered sets $\mn(\bats{E^l}{F^l})$, $\mn(\bats{E}{F})$ and
$\mn(\bats{E^r}{F^r})$ is isomorphic to each other. Besides, if
$\ga\in\mn(\bats{E}{F})$ and $\gamma^l$ and
$\gamma^r$ are the corresponding $C^*$-norms on
$\mn(\bats{E^l}{F^l})$ and $\mn(\bats{E^r}{F^r})$ respectively, then
$\bts{E}{\ga}{F}$ is a Morita-Rieffel equivalence bimodule between
$\bts{E^l}{\ga^l}{F^l}$ and $\bts{E^r}{\ga^r}{F^r}$.  
\end{theorem}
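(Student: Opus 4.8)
I would first reduce to $E$, $F$ positive: by Corollary~\ref{cor:E}, applied once in each variable (via $\bats{E_p}{F}\cong\bats{F}{E_p}$), one has $\mn(\bats{E}{F})=\mn(\bats{E_p}{F_p})$, while $E_p^r=E^r$, $E_p^l=E^l$, and similarly for $F$; also, for general $E$, $F$ the completion $\bts{E}{\gamma}{F}$, though not necessarily positive, has positive part $(\bts{E}{\gamma}{F})_p$ canonically identified with $\bts{E_p}{\gamma}{F_p}$, so it is the latter that is literally the imprimitivity bimodule. Thus it suffices to treat positive \cts $E$, $F$.

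\textbf{The three posets.} Set $G:=\bats{E}{F}$, an admissible $*$-tring since it carries a $C^*$-norm. By Proposition~\ref{prop:tensor pos C*trings}, $(G,\bats{E_0^r}{F_0^r},[\,,\,])$, with $[\,,\,]:=\pr{\,}{\,}_r^E\odot\pr{\,}{\,}_r^F$, is a full admissible basic triple whose associated $*$-tring is $G$, the \ct $G$ is positive, and $\msn_{cs}(\bats{E_0^r}{F_0^r})=\msn(\bats{E_0^r}{F_0^r})$. Since $G$ is then faithful over $\bats{E_0^r}{F_0^r}$ (Proposition~\ref{prop:adfaith}), Corollary~\ref{cor:adfaith} identifies $G_0^r$ with $\bats{E_0^r}{F_0^r}$, carrying $\pr{\,}{\,}_r$ to $[\,,\,]$; so Theorem~\ref{thm:semi*main} gives $\mn(\bats{E}{F})\cong\mn_{cs}(G_0^r)=\mn(\bats{E_0^r}{F_0^r})$. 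Next, $E_0^r$ and $F_0^r$ being dense $*$-ideals of the \css $E^r$ and $F^r$ (Corollary~\ref{cor:cbt} and fullness), Lemma~\ref{lem:tensor product norms} shows restriction is an order-preserving bijection $\mn(\bats{E^r}{F^r})\to\mn(\bats{E_0^r}{F_0^r})$; its inverse preserves the order too, since (as in the proof of Theorem~\ref{thm:semi*main}) a dominating $C^*$-norm induces a contractive $*$-homomorphism between completions. Composing, $\mn(\bats{E}{F})\cong\mn(\bats{E^r}{F^r})$. Finally, running the same chain on the reverse $*$-trings $E^*$, $F^*$ — again positive \cts, with $(E^*)^r=E^l$, $(F^*)^r=F^l$, $(\bats{E}{F})^*=\bats{E^*}{F^*}$ and $\mn((\bats{E}{F})^*)=\mn(\bats{E}{F})$ — yields $\mn(\bats{E}{F})\cong\mn(\bats{E^l}{F^l})$, so all three posets are mutually isomorphic.

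\textbf{The equivalence bimodule.} Given $\gamma\in\mn(\bats{E}{F})$, let $\gamma^r\in\mn(\bats{E^r}{F^r})$ and $\gamma^l\in\mn(\bats{E^l}{F^l})$ be its images under these isomorphisms, and let $G_\gamma:=\bts{E}{\gamma}{F}$ be the Hausdorff completion of $(G,\gamma)$. As $G$ is positive, so is $G_\gamma$, hence $G_\gamma=(G_\gamma)_p$ is a $(G_\gamma^l-G_\gamma^r)$-imprimitivity bimodule (the discussion preceding Corollary~\ref{cor:preeq}). By Corollary~\ref{cor:cbt}, $G_\gamma^r$ is the completion of $(G_0^r,\Phi_r(\gamma))$; since $G_0^r\cong\bats{E_0^r}{F_0^r}$ is dense in $\bats{E^r}{F^r}$ and $\gamma^r$ restricts there to $\Phi_r(\gamma)$ by the construction above, this completion is $\bts{E^r}{\gamma^r}{F^r}$, and symmetrically $G_\gamma^l=\bts{E^l}{\gamma^l}{F^l}$. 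Hence $\bts{E}{\gamma}{F}$ — i.e.\ $\bts{E_p}{\gamma}{F_p}$ in general — is a Morita-Rieffel equivalence bimodule between $\bts{E^l}{\gamma^l}{F^l}$ and $\bts{E^r}{\gamma^r}{F^r}$.

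\textbf{Main obstacle.} The whole argument leans on Proposition~\ref{prop:tensor pos C*trings}: that the tensor basic triple of two positive \cts is admissible and full, and — decisively — that $\msn_{cs}$ agrees with $\msn$ on $\bats{E_0^r}{F_0^r}$, equivalently that $[u,u]\geq 0$ in $\tmax{E^r}{F^r}$ for every $u\in\bats{E}{F}$. With that granted, the rest is a routine transport along the lattice isomorphisms of Theorem~\ref{thm:semi*main} and Lemma~\ref{lem:tensor product norms} plus the identification of the left and right \css of a positive \ct; the only other delicate point is the reduction to the positive case and the compatibility of the fundamental decomposition of $\bts{E}{\gamma}{F}$ with tensoring.
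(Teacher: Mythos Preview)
Your argument is correct and follows essentially the same route as the paper: reduce to the positive case via Corollary~\ref{cor:E}, then chain Proposition~\ref{prop:tensor pos C*trings} (giving positivity of $\bats{E}{F}$ and $\msn_{cs}=\msn$ on $\bats{E_0^r}{F_0^r}$), Corollary~\ref{cor:possc}/Theorem~\ref{thm:semi*main}, the identification $(\bats{E}{F})_0^r\cong\bats{E_0^r}{F_0^r}$, and Lemma~\ref{lem:tensor product norms}. You are in fact more explicit than the paper on two points it leaves tacit: the reduction to positive $E,F$ (Proposition~\ref{prop:tensor pos C*trings} is stated only for positive \cts), and the verification that the completions $G_\gamma^r$, $G_\gamma^l$ are the claimed tensor-product \css, whence the Morita--Rieffel equivalence.
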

\begin{proof}

Proposition~\ref{prop:tensor pos C*trings} together with
Corollary~\ref{cor:possc} imply $\mn(\bats{E}{F})$ is isomorphic (as a partially ordered set) to 
$\mn(\bats{E}{F})_0^r).$
By \ref{prop:tprod} the posets  
$\mn(\bats{E}{F})_0^r)$ and $\mn(\bats{E_0^r}{F_0^r})$ are
 isomorphic, and the latter is isomorphic to
 $\mn(\bats{E^r}{F^r})$ by Lemma~\ref{lem:tensor product norms}. Thus
 $\mn(\bats{E}{F})\cong \mn(\bats{E^r}{F^r})$. Similarly we have
 $\mn(\bats{E}{F})\cong \mn(\bats{E^l}{F^l})$.  
\end{proof}

\begin{corollary}\label{cor:maxmin}
Let $E$ and $F$ be \cts. Then there exist a maximum $C^*$-norm 
$\norm{\cdot}_{\max}$ on \bats{E}{F}, and a minimum $C^*$-norm 
$\norm{\cdot}_{\min}$ on \bats{E}{F}, and  
\begin{equation*}
  \big(\bts{E}{\max}{F}\big)^l
    =\bts{E^l}{\max}{F^l},
    \hspace*{1cm} 
  \big(\bts{E}{\max}{F}\big)^r
    =\bts{E^r}{\max}{F^r},
\end{equation*}
\begin{equation*}
  \big(\bts{E}{\min}{F}\big)^l
    =\bts{E^l}{\min}{F^l}
  \hspace*{1cm} 
  \big(\bts{E}{\min}{F}\big)^r
    =\bts{E^r}{\min}{F^r}.
\end{equation*}
\end{corollary}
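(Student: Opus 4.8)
The plan is to obtain the corollary as an essentially formal consequence of Theorem~\ref{thm:main1}. First I would invoke the classical fact that for any two $C^*$-algebras the algebraic tensor product carries a largest $C^*$-norm, written $\norm{\cdot}_{\max}$, and a smallest one, written $\norm{\cdot}_{\min}$ (see, e.g., \cite[Appendix~T]{w}). Since $E^r$ and $F^r$ are \css, this says precisely that the partially ordered set $\mn(\bats{E^r}{F^r})$ has a greatest and a least element. By Theorem~\ref{thm:main1} the poset $\mn(\bats{E}{F})$ is order isomorphic to $\mn(\bats{E^r}{F^r})$, hence it too has a greatest element, which we name $\norm{\cdot}_{\max}$, and a least element, which we name $\norm{\cdot}_{\min}$. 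This already settles the existence half of the statement.

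For the identification of the left and right \css, I would apply the last assertion of Theorem~\ref{thm:main1} with $\ga=\norm{\cdot}_{\max}$. An order isomorphism carries the top element to the top element, so the $C^*$-norm $\ga^r$ on $\bats{E^r}{F^r}$ corresponding to $\ga$ is exactly the maximal $C^*$-norm, and likewise $\ga^l$ is the maximal $C^*$-norm on $\bats{E^l}{F^l}$. Theorem~\ref{thm:main1} then gives that $\bts{E}{\max}{F}$ is a Morita-Rieffel equivalence bimodule between $\bts{E^l}{\max}{F^l}$ and $\bts{E^r}{\max}{F^r}$; in particular its right and left \css are $\bts{E^r}{\max}{F^r}$ and $\bts{E^l}{\max}{F^l}$, which are the claimed formulas. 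Repeating the argument with $\ga=\norm{\cdot}_{\min}$, and using that an order isomorphism also preserves least elements, yields the analogous formulas for the minimal tensor product. Alternatively, one can verify the $r$-side formulas by hand: by Proposition~\ref{prop:tprod} we may take $(\bats{E}{F})_0^r=\bats{E_0^r}{F_0^r}$, the norm $\ga^r$ restricts on this dense $*$-ideal to the restriction of the maximal (resp. minimal) $C^*$-norm of $\bats{E^r}{F^r}$ by Lemma~\ref{lem:tensor product norms}, and by Corollary~\ref{cor:cbt} the completion of $(\bats{E}{F})_0^r$ in $\ga^r$ is $(\bts{E}{\ga}{F})^r$.

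I do not expect any genuine obstacle here: all the real work is already in Theorem~\ref{thm:main1}, and what remains is bookkeeping. The one point worth stating explicitly is that the maps $\ga\mapsto\ga^l$ and $\ga\mapsto\ga^r$ referred to in Theorem~\ref{thm:main1} are exactly the order isomorphisms constructed in its proof (through Corollary~\ref{cor:possc}, Proposition~\ref{prop:tprod} and Lemma~\ref{lem:tensor product norms}), so that ``corresponding $C^*$-norm'' is unambiguous and these maps do send extreme elements to extreme elements; once this is noted, the corollary follows at once.
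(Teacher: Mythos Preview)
Your proposal is correct and matches the paper's approach: the paper states this result as a corollary of Theorem~\ref{thm:main1} with no separate proof, and your argument is precisely the natural elaboration of that implication, using that the order isomorphisms of Theorem~\ref{thm:main1} carry the greatest and least elements of $\mn(\bats{E^r}{F^r})$ (which exist by the classical theory for \css) to those of $\mn(\bats{E}{F})$.
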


\begin{corollary}[\textit{cf.} \mbox{\cite[Theorem 6.5]{karu}}]\label{cor:nuclear}
The following assertions are equivalent for a \ct $E$:
\be
 \item $E$ is a nuclear \ct (\ref{defn:nuclearct}).
 \item $E^l$ is a nuclear \cs.
 \item $E^r$ is a nuclear \cs. 
\ee 

\end{corollary}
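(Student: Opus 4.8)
The plan is to deduce all three equivalences directly from Theorem~\ref{thm:main1}, which supplies, for every pair of \cts $E$ and $F$, order isomorphisms among the posets $\mn(\bats{E^l}{F^l})$, $\mn(\bats{E}{F})$ and $\mn(\bats{E^r}{F^r})$. Recall that, by Definition~\ref{defn:nuclearct}, a \ct $E$ is nuclear exactly when $\mn(\bats{E}{F})$ is a singleton for every \ct $F$, and that a \cs $A$ is nuclear exactly when $\mn(\bats{A}{B})$ is a singleton for every \cs $B$. Since Theorem~\ref{thm:main1} treats the left and right sides on equal footing, it will be enough to prove $(1)\Leftrightarrow(3)$; the equivalence $(1)\Leftrightarrow(2)$ then follows verbatim with $E^l$ in place of $E^r$.

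For the implication $(3)\Rightarrow(1)$ I would argue as follows: assuming $E^r$ is a nuclear \cs and given an arbitrary \ct $F$, Theorem~\ref{thm:main1} identifies $\mn(\bats{E}{F})$ with $\mn(\bats{E^r}{F^r})$ as ordered sets; since $F^r$ is a \cs and $E^r$ is nuclear, the latter is a singleton, hence so is the former, and $E$ is nuclear. For $(1)\Rightarrow(3)$ the first thing to do is to realize an arbitrary \cs $B$ as $F^r$ for a suitable \ct $F$: take $F=B$, viewed as a \ct via the ternary product $(x,y,z)\mapsto xy^*z$, and note that $(B,B,\pr{\,}{\,})$ with $\pr{x}{y}=x^*y$ is a full $C^*$-basic triple whose first component is $B$, so that Corollary~\ref{cor:cbt} lets us identify $B^r$ with $B$. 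Then, for a nuclear \ct $E$ and any \cs $B$, Theorem~\ref{thm:main1} gives $\mn(\bats{E^r}{B})=\mn(\bats{E^r}{B^r})\cong\mn(\bats{E}{B})$, and the right-hand side is a singleton since $E$ is nuclear; as $B$ was arbitrary, $E^r$ is nuclear.

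I do not expect a genuine obstacle here: the corollary is essentially a transport of the singleton property along the isomorphisms of Theorem~\ref{thm:main1}. The only step requiring a little care is the identification $B^r\cong B$ for a \cs $B$ regarded as a \ct, which comes down to checking that $\cgen\{x^*y:x,y\in B\}=B$ — so that the basic triple $(B,B,\pr{\,}{\,})$ is full in the $C^*$ sense — and then invoking Corollary~\ref{cor:cbt}.
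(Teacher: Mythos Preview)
Your proposal is correct and is precisely the argument the paper has in mind: the corollary is stated there without proof because it follows immediately from Theorem~\ref{thm:main1} by the singleton-transport you describe, together with the observation that every \cs $B$ is a positive \ct with $B^r\cong B$. The only detail you flagged as needing care, the fullness of $(B,B,\pr{\,}{\,})$, is indeed routine (e.g.\ via an approximate unit), so there is no gap.
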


\par The equivalence between 2. and 3. in \ref{cor:nuclear} is exactly 
the following well-known result (\cite{beer}, \cite{z1}):
\textit{if $A$ and $B$ are two Morita-Rieffel equivalent \css then $A$ is
nuclear if and only if so is $B$.}

\subsection{Exact \texorpdfstring{$C^*$}{C*}-trings}\label{subsec:exactrings}

To end the section we introduce the notion of exact \ct, extending the notion of exact TRO of \cite{karu}, and we 
prove a result similar to Corollary \ref{cor:nuclear}. The reader is
referred to \cite{wass} for the theory of exact \css.   

Suppose that 
$ \xymatrix
{0\ar@{->}[r]&F_1\ar@{->}[r]^{\phi}&F_2\ar@{->}[r]^{\psi}&F_3\ar@{->}[r]&0}$
is an \es of $C^*$-trings, that is, $\phi$ and   
$\psi$ are \hms of \cts, $\phi$ is injective, $\psi$ is
surjective, and $\ker\psi =\phi (F_1)$. Let $E$ be a \ct. Then the 
sequence 
\begin{equation*}
  \xymatrix
   {0\ar@{->}[r]&\bats{E}{F_1}\ar@{->}[r]^{id\odot\phi}
                &\bats{E}{F_2}\ar@{->}[r]^{id\odot\psi}
                &\bats{E}{F_3}\ar@{->}[r]&0}
\end{equation*}
also is exact. We have an inclusion
\begin{equation*}
 (\bats{E}{F_2})/(\bats{E}{F_1})\inc (\tmin{E}{F_2})/(\tmin{E}{F_1})
\end{equation*}
and the latter quotient is a \ct. Then there exists a $C^*$-norm $\ga$ on
\bats{E}{F_3} such that  
\begin{equation*} \xymatrix
  {0\ar@{->}[r]&\tmin{E}{F_1}\ar@{->}[r]^{id\otimes\phi}
              &\tmin{E}{F_2}\ar@{->}[r]^{id\otimes\psi}
              &\bts{E}{\ga}{F_3}\ar@{->}[r]&0}
\end{equation*}
is exact. Since $\ga$ is greater or equal to the minimum norm, the identity map on  
\bats{E}{F_3} extends to a surjective \hm
$\bts{E}{\ga}{F_3}\to\tmin{E}{F_3}$. 

\begin{definition}\label{defn:exacttrings}
We say that a \ct $E$ is  exact if for each \es 
\begin{equation*}
  \xymatrix
   {0\ar@{->}[r]&F_1\ar@{->}[r]&F_2\ar@{->}[r]&F_3\ar@{->}[r]&0}
\end{equation*}
of \cts we have that 
\begin{equation*}
  \xymatrix
   {0\ar@{->}[r]&\tmin{E}{F_1}\ar@{->}[r]&\tmin{E}{F_2}\ar@{->}[r]&
                 \tmin{E}{F_3}\ar@{->}[r]&0}
\end{equation*}
also is exact.
\end{definition}

\begin{proposition}\label{prop:exex}
Let $E$ and $F$ be \cts, and suppose that $G$ is an ideal of  
$F$ (Definition \ref{defn:ideals}). Then 
\begin{equation*} \xymatrix
   {0\ar@{->}[r]&\tmin{E}{G}\ar@{->}[r]&\tmin{E}{F}\ar@{->}[r]&
    \tmin{E}{(F/G)}\ar@{->}[r]&0}
\end{equation*} 
is exact if and only if the following sequence is exact: 
\begin{equation*} \xymatrix
{0\ar@{->}[r]&\tmin{E^r}{G^r}\ar@{->}[r]&
              \tmin{E^r}{F^r}\ar@{->}[r]&
              \tmin{E^r}{\big(F^r/G^r\big)}\ar@{->}[r]&0} 
\end{equation*}
\end{proposition}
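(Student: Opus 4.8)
The plan is to transport the whole question across the exact functor $E\mapsto E^r$ of Corollary~\ref{cor:ztexact}, using that this functor turns minimal tensor products into minimal tensor products (Corollary~\ref{cor:maxmin}) and quotients into quotients (Proposition~\ref{prop:quotient}).

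First I would record the identifications. Since $G$ is an ideal of $F$, $G^r$ is an ideal of $F^r$ and $(F/G)^r=F^r/G^r$ by Proposition~\ref{prop:quotient}; and $(\tmin{E}{G})^r=\tmin{E^r}{G^r}$, $(\tmin{E}{F})^r=\tmin{E^r}{F^r}$, $(\tmin{E}{(F/G)})^r=\tmin{E^r}{(F^r/G^r)}$ by Corollary~\ref{cor:maxmin}. What matters here is that these identifications are \emph{natural}: on a pair of \hms of \cts the functor sends the induced map of minimal completions $\varphi\odot\psi$ to $\varphi^r\odot\psi^r$, because on elementary tensors both agree with the canonical basic-triple map of Proposition~\ref{prop:tprod}. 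Hence $(-)^r$ carries the sequence
\begin{equation*}
0\to\tmin{E}{G}\to\tmin{E}{F}\to\tmin{E}{(F/G)}\to 0
\end{equation*}
onto the sequence
\begin{equation*}
0\to\tmin{E^r}{G^r}\to\tmin{E^r}{F^r}\to\tmin{E^r}{(F^r/G^r)}\to 0 .
\end{equation*}

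Next I would note that in each of these two sequences the left map is injective, the right map is surjective, and the image of the left map lies in the kernel of the right map, independently of whether the sequences are exact. For the \ct sequence: the composite vanishes on elementary tensors, hence everywhere by continuity; the right map has dense and (by Corollary~\ref{cor:imagem}) closed range, so it is onto; and the left map $\tmin{E}{G}\to\tmin{E}{F}$ is injective because under $(-)^r$ it becomes the inclusion $\tmin{E^r}{G^r}\subseteq\tmin{E^r}{F^r}$ of a minimal tensor product with an ideal, which is injective, so its kernel has trivial $(-)^r$ by Corollary~\ref{cor:lattice} and is therefore trivial. The corresponding three facts on the $C^*$-side are standard. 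Consequently each sequence is exact if and only if the canonical surjection
\begin{equation*}
\bar\rho\colon (\tmin{E}{F})/\overline{\bats{E}{G}}\longrightarrow\tmin{E}{(F/G)}
\end{equation*}
(respectively its $C^*$-analogue) is injective; here $\overline{\bats{E}{G}}$ is the image of the left map, an ideal of $\tmin{E}{F}$ because $\bats{E}{G}$ is an ideal of $\bats{E}{F}$ ($G$ being an ideal of $F$), so by Proposition~\ref{prop:quotient} the quotient is again a \ct and $\bar\rho$ is a genuine \hm of \cts.

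Applying $(-)^r$ and using the naturality above together with Proposition~\ref{prop:quotient} once more, $\bar\rho$ is sent to the analogous $C^*$-map $(\tmin{E^r}{F^r})/(\tmin{E^r}{G^r})\to\tmin{E^r}{(F^r/G^r)}$; and since a \hm of \cts is injective precisely when its $(-)^r$-image is (Corollary~\ref{cor:lattice} again, using that a \ct with trivial $(-)^r$ vanishes), $\bar\rho$ is injective iff that $C^*$-map is, i.e.\ iff the $C^*$-sequence is exact. Combined with the previous paragraph this yields both implications at once. The step I expect to need the most care is precisely this reduction: the functor $(-)^r$ does \emph{not} reflect surjectivity (cf.\ Remark~\ref{rem:lattice}), so one must obtain surjectivity of the right map by hand and injectivity of the left map by pulling back the $C^*$-algebra fact, and one must check that $\overline{\bats{E}{G}}$ is an ideal so that the quotient \ct and $\bar\rho$ are legitimate; the morphism-level naturality of $(-)^r$ with respect to $\otimes_{\min}$ and quotients, though routine, should also be stated explicitly.
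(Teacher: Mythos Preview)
Your proof is correct and rests on the same ingredients as the paper's: the identifications $(\tmin{E}{X})^r\cong\tmin{E^r}{X^r}$ from Corollary~\ref{cor:maxmin}, $(F/G)^r\cong F^r/G^r$ from Proposition~\ref{prop:quotient}, and the kernel correspondence of Corollary~\ref{cor:lattice}. The organization differs slightly. The paper treats the two implications separately: for the forward direction it simply applies the exact functor $(-)^r$ of Corollary~\ref{cor:ztexact} to the given exact \ct sequence and reads off the result; for the converse it forms the automatically exact sequence with third term $(\tmin{E}{F})/(\tmin{E}{G})=\bts{E}{\gamma}{(F/G)}$, applies $(-)^r$, compares with the assumed exact $C^*$-sequence in a two-row diagram, and concludes that $\phi^r$ (your $\bar\rho^{\,r}$) is an isomorphism, hence $\gamma=\min$. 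You instead reduce both sequences simultaneously to the injectivity of the induced surjection $\bar\rho$ and its $C^*$-analogue, and then use that $(-)^r$ reflects injectivity. Your route is a bit more symmetric and makes explicit the preliminary facts (injectivity on the left, surjectivity on the right, $\overline{\bats{E}{G}}$ being an ideal) that the paper takes for granted; the paper's forward direction is shorter because exactness of the functor does the work in one stroke.
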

\begin{proof}
Suppose first that the sequence below is exact:  
\begin{equation*} \xymatrix@1
   {0\ar[r]&\tmin{E}{G}\ar[r]&\tmin{E}{F}\ar[r]&
                 \tmin{E}{(F/G)}\ar[r]&0}
\end{equation*} 
By Corollaries \ref{cor:maxmin} and \ref{cor:ztexact}, we
have the following commutative diagram  
\begin{equation*} \xymatrix{
         0\ar[r]&\big(\tmin{E}{G}\big)^r\ar[r]\ar[d]_-{\cong}
                &\big(\tmin{E}{F}\big)^r\ar[r]\ar[d]_-{\cong}
                &\big(\tmin{E}{(F/G)}\big)^r\ar[r]\ar[d]_-{\cong}
                &0\\
         0\ar[r]&\tmin{E^r}{G^r}\ar[r]\ar[d]_=
                &\tmin{E^r}{F^r}\ar[r]\ar[d]_=
                &\tmin{E^r}{(F/G)^r}\ar[r]\ar[d]_-{\cong}
                &0 \\
         0\ar[r]&\tmin{E^r}{G^r}\ar[r]
                &\tmin{E^r}{F^r}\ar[r]
                &\tmin{E^r}{F^r/G^r}\ar[r]
                &0        
     }
\end{equation*}
Since the upper two rows are exact, the third one also is exact.
\par To prove the converse, note first that  
\begin{equation*} \xymatrix@1
   {0\ar[r]&\tmin{E}{G}\ar[r]&\tmin{E}{F}\ar[r]&
                 \big(\tmin{E}{F}\big)/\big(\tmin{E}{G}\big)\ar[r]&0}
\end{equation*} 
is  exact, and $\big(\tmin{E}{F}\big)/\big(\tmin{E}{G}\big)$ is a 
$C^*$-completion of the ternary ring $\bats{E}{(F/G)}$. Denoting the
corresponding $C^*$-norm by $\ga$, we have a surjective \hm 
$\phi :\bts{E}{\ga}{(F/G)}\to \tmin{E}{(F/G)}$ which extends the
identity on $\bats{E}{(F/G)}$. Now, applying the exact functor $E\mapsto
E^r$ we obtain the commutative diagram with exact rows that follows: 
\begin{equation*} \xymatrix{
         0\ar[r]&\tmin{E^r}{G^r}\ar[r]\ar[d]_=
                &\tmin{E^r}{F^r}\ar[r]\ar[d]_=
                &\bts{E^r}{\ga}{F^r/G^r}\ar[r]\ar[d]_{\phi^r}
                &0  \\
          0\ar[r]&\tmin{E^r}{G^r}\ar[r]
                &\tmin{E^r}{F^r}\ar[r]
                &\tmin{E^r}{F^r/G^r}\ar[r]
                &0        
     }
\end{equation*}
It follows that the \hm $\phi^r$ is an isomorphism. 
\end{proof}

\begin{corollary}[\textit{cf.} \mbox{\cite[Theorem 6.1]{karu}}]\label{cor:exex}
A \ct $E$ is exact (\ref{defn:exacttrings}) if and only if $E^r$ is an
exact \cs. 
\end{corollary}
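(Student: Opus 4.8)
The plan is to derive both implications directly from Proposition~\ref{prop:exex}, which already transports exactness of a minimal–tensor sequence across the functor $E\mapsto E^r$. The first reduction is that, by Corollary~\ref{cor:imagem}, every short \es of \cts is isomorphic to one of the form $0\to G\to F\to F/G\to 0$ with $G$ an ideal of $F$; hence a \ct $E$ is exact in the sense of Definition~\ref{defn:exacttrings} if and only if $0\to\tmin{E}{G}\to\tmin{E}{F}\to\tmin{E}{(F/G)}\to 0$ is exact for every \ct $F$ and every ideal $G$ of $F$. Similarly, a \cs $D$ is exact exactly when $0\to\tmin{D}{J}\to\tmin{D}{B}\to\tmin{D}{(B/J)}\to 0$ is exact for every \cs $B$ and every closed ideal $J$ of $B$, since every short \es of \css is isomorphic to one of the shape $0\to J\to B\to B/J\to 0$.

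For the implication ``$E^r$ exact $\Rightarrow$ $E$ exact'', I would start from an ideal $G$ of a \ct $F$ and apply the exact functor of Corollary~\ref{cor:ztexact} to $0\to G\to F\to F/G\to 0$. Using Proposition~\ref{prop:quotient} to identify $(F/G)^r$ with $F^r/G^r$, and Proposition~\ref{prop:correspondence} (or Corollary~\ref{cor:lattice}) to see that $G^r$ is a closed ideal of $F^r$, this produces a short \es $0\to G^r\to F^r\to F^r/G^r\to 0$ of \css. Exactness of $E^r$ makes $0\to\tmin{E^r}{G^r}\to\tmin{E^r}{F^r}\to\tmin{E^r}{(F^r/G^r)}\to 0$ exact, and then Proposition~\ref{prop:exex} feeds this back to the exactness of $0\to\tmin{E}{G}\to\tmin{E}{F}\to\tmin{E}{(F/G)}\to 0$, as required.

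For the converse I need to realize an arbitrary short \es of \css as the $r$-image of one of \cts. Given a \cs $B$ with a closed ideal $J$, I would view $B$ as a \ct under Zettl's product $xy^*z$: the triple $(B,B,\pr{\,}{\,})$ with $\pr{x}{y}:=x^*y$ is a full $C^*$-basic triple whose first component is $B$, so Corollary~\ref{cor:cbt} identifies $B^r$ with $B$. Under this identification, Proposition~\ref{prop:correspondence} yields an ideal $G$ of the \ct $B$ with $G^r\cong J$, and Proposition~\ref{prop:quotient} gives $(B/G)^r\cong B/J$. Now exactness of $E$ applied to $0\to G\to B\to B/G\to 0$, combined again with Proposition~\ref{prop:exex}, turns the resulting exact minimal–tensor sequence into the exactness of $0\to\tmin{E^r}{J}\to\tmin{E^r}{B}\to\tmin{E^r}{(B/J)}\to 0$, so $E^r$ is exact.

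Almost everything here is bookkeeping once Proposition~\ref{prop:exex} is available; the one step that needs a small argument is the identification $B^r\cong B$ for a \cs $B$ regarded as a \ct, together with the matching of ideals and quotients under it --- this is precisely what makes the slogan ``every \cs is of the form $F^r$'' work, the same device already used for nuclearity in Corollary~\ref{cor:nuclear}. I therefore expect no genuine obstacle, only the need to apply the correspondences of Propositions~\ref{prop:correspondence} and~\ref{prop:quotient} consistently on both sides of the functor.
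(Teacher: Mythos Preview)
Your proposal is correct and is exactly the approach the paper takes: the paper's proof reads in its entirety ``Immediate from Proposition~\ref{prop:exex}'', and what you have written is a faithful unpacking of that word ``immediate'', using Corollary~\ref{cor:imagem} to reduce to ideal/quotient sequences on the \ct side and the identification $B^r\cong B$ for a \cs $B$ viewed as a \ct to cover the \cs side. One minor simplification: for the converse you need not invoke Proposition~\ref{prop:correspondence} to produce $G$---the closed ideal $J$ itself is already a \ct-ideal of $B$ with $J^r=J$, so you can take $G=J$ directly.
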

\begin{proof}Immediate from Proposition \ref{prop:exex}
\end{proof}

\par As previously for nuclear \css, we easily obtain from
\ref{cor:exex} the following known result(\cite{ngess}): \textit{if
$A$ and $B$ are Morita-Rieffel equivalent \css, then $A$ is exact if and only 
if $B$ is exact.}

\bibliographystyle{amsplain}

\end{document}